\setlist[enumerate,1]{label={(\roman*)}}
\newtheorem{theorem}{Theorem}
\newtheorem{definition}[theorem]{Definition}
\newtheorem{lemma}[theorem]{Lemma}
\newtheorem{corollary}[theorem]{Corollary}
\newtheorem*{theorem-non}{Theorem}
\NewDocumentCommand{\mathOrText}{m}
{%
	\ensuremath{#1}\xspace%
}
\DeclareDocumentCommand{\Pr}{m o}
{
	\mathOrText{ \mathds{P}\left[ #1 \IfNoValueF{#2}{\;\middle\vert\; #2} \right]}
}
\DeclareDocumentCommand{\E}{m o}
{
	\mathOrText{ \mathds{E}\left[ #1 \IfNoValueF{#2}{\;\middle\vert\; #2} \right]}
}
\DeclareDocumentCommand{\filtration}{o}
{
	\mathOrText{ \mathcal{F}\IfNoValueF{#1}{_{#1}}}
}
\def\N{\mathds{N}}
\def\R{\mathds{R}}
\def\O{\mathcal{O}}
\def\opt{\textsc{OPT}}
\def\P{\mathcal{P}}
\def\lp{\text{LP}}
\def\opt{\text{OPT}}
\def\lpPrim{\text{LP}}
\def\Q{\mathcal{Q}}
\def\B{\mathcal{B}}
\def\AB{A \cup B}
\def\oAB{\overline{A \cup B}} 
\def\oB{\overrightarrow{\B}}
\def\ora{\overrightarrow}
\def\aall{A_{\texttt{all}}}
\def\ball{B_\texttt{all}}
\def\alg1+1{\texttt{(1+1)~EA}\xspace}
\def\algGlobalSemo{\texttt{Global Semo}\xspace}
\def\algGlobalSemoAlt{\texttt{Global Semo Alt}\xspace}
\def\algAltMut{Alternative Mutation Operator\xspace}
\title{Fixed Parameter Multi-Objective Evolutionary Algorithms for the $W$-Separator Problem}
\author[*]{Samuel~Baguley}
\author[*]{Tobias~Friedrich}
\author[**]{Aneta~Neumann}
\author[**]{Frank~Neumann}
\author[*]{Marcus~Pappik}
\author[*]{Ziena~Zeif}
\affil[*]{\normalsize
	\{firstname.lastname\}@hpi.de\\
	\vspace*{\baselineskip}
	Hasso Plattner Institute, University of Potsdam
}
\affil[**]{\normalsize
	\{firstname.lastname\}@adelaide.edu.au\\
	\vspace*{\baselineskip}
	University of Adelaide
}
\begin{document}
	\maketitle
	\setcounter{page}{0}
	\thispagestyle{empty}

	\begin{abstract}
		Parameterized analysis provides powerful mechanisms for obtaining fine-grained insights into different types of algorithms.
		In this work, we combine this field with evolutionary algorithms and provide parameterized complexity analysis of evolutionary multi-objective algorithms for the $W$-separator problem, which is a natural generalization of the vertex cover problem.
		The goal is to  remove  the  minimum  number of  vertices such  that  each  connected component in the resulting graph has at most $W$ vertices.
		We provide different multi-objective formulations involving two or three objectives that provably lead to fixed-parameter evolutionary algorithms with respect to the value of an optimal solution $OPT$ and $W$.
		Of particular interest are kernelizations and the reducible structures used for them.
		We show that in expectation the algorithms make incremental progress in finding such structures and beyond.
		The current best known kernelization of the $W$-separator uses linear programming methods and requires a non-trivial post-process to extract the reducible structures.
		We provide additional structural features to show that evolutionary algorithms with appropriate objectives are also capable of extracting them.
		Our results show that evolutionary algorithms with different objectives guide the search and admit fixed parameterized runtimes to solve or approximate (even arbitrarily close) the $W$-separator problem.
	\end{abstract}
	
	\section{Introduction}

Parameterized analysis of algorithms~\cite{downey2012parameterized} provides a way of understanding the working behaviour of algorithms via their dependence on important structural parameters for NP-hard problems. This technique of fine-grained analysis allows for insights into which parameters make a problem hard. When analyzing heuristic search methods such as evolutionary algorithms, a parameterized runtime analysis allows for runtime bounds not just dependent on the given input size but also in terms of parameters that measure the difficulty of the problem. This is particularly helpfule for understanding heuristic search methods which are usually hard to analyze in a rigorous way.

Parameterized analysis of evolutionary algorithms has been carried out for several important combinatorial optimization problems (see \cite{DBLP:series/ncs/0001S20} for an overview). 
The first analysis was for the classical vertex cover problem~\cite{DBLP:journals/algorithmica/KratschN13} which is the prime problem in the area of parameterized complexity.
Following that, problems such as the maximum leaf spanning problem~\cite{DBLP:conf/ppsn/KratschLNO10}, the Euclidean traveling salesperson problem~\cite{DBLP:journals/ec/SuttonNN14} and parameterized settings of 
makespan scheduling~\cite{DBLP:conf/ppsn/SuttonN12} were considered. More recently, both the closest string problem~\cite{DBLP:journals/algorithmica/Sutton21} and
jump and repair operators have been analyzed in the parameterized setting~\cite{DBLP:conf/foga/BransonS21}.
A crucial aspect of the parameterized analysis of evolutionary algorithms (and algorithms in general) is the ability of the considered approaches to obtain a kernelization for the considered problems. A kernel here refers to a smaller sub-problem whose size is polynomially bounded in the size of the given parameter(s). As the size is bounded, brute-force methods or random sampling can then be applied to obtain an optimal solution.

A small subset of vertices that disconnect a graph is usually called a vertex separator.
In terms of successful divide-and-conquer or parallel processing strategies, such separators are one of the most powerful tools for developing efficient graph algorithms.
This generality and its broad applicability have made the study of separators a rich and
active field of research, see for example the book by Rosenberg and Heath~\cite{DBLP:books/daglib/0007445}, or the line of research
initiated by the seminal work of Lipton and Tarjan \cite{doi:10.1137/0136016} on separators in planar graphs.
Numerous different types of separator structures have emerged over the past couple of decades.
In this paper, we address the problem of decomposing a graph into small pieces - with respect to a parameter $W$ - by removing the smallest possible set of vertices. 
More formally, given a graph $G=(V,E)$ and a parameter $W \in \N$, the goal is to remove the minimum number of vertices such that each connected component in the resulting graph has at most $W$ vertices.
The problem is called the \emph{$W$-separator problem} - also known in the literature as the \emph{component order connectivity problem} or \emph{$\alpha$-balanced separator problem}, where $\alpha  \in (0,1)$ and $W=\alpha|V|$.
An equivalent view of this problem is to cover or hit every connected subgraph of size $W+1$ with the minimum number of vertices.
In particular, $W=1$ corresponds to covering all edges, showing that the $W$-separator problem is a natural generalization of the vertex cover problem.

In this paper, we generalize the results obtained in \cite{DBLP:journals/algorithmica/KratschN13} for the vertex cover problem to the more general $W$-separator problem.
Precisely, we investigate the $W$-separator problem on multi-objective evolutionary algorithms and show that in expectation they admit fixed parameter runtimes with respect to the value of an optimal solution $\opt$ and $W$.
It is unlikely that such runtimes can be achieved by considering $\opt$ or $W$ alone.
Indeed, $W=1$ corresponds to a hard problem, which shows that $W$ (alone) is not a suitable parameter.
For the parameter $\opt$, the problem is $W[1]$-hard even when restricted to split graphs~\cite{DBLP:journals/algorithmica/DrangeDH16}.
These lower bounds lead to the study of parameterization by $W+\opt$.
The best known algorithm with respect to these parameters finds an optimal solution in time $n^{O(1)} \cdot 2^{\O(\log(W) \cdot \opt)}$~\cite{DBLP:journals/algorithmica/DrangeDH16}.
Unless the exponential time hypothesis fails, the authors prove that this running time is tight up to constant factors, i.e., there is no algorithm that solves the problem in time $n^{\O(1)} \cdot 2^{o(\opt \cdot \log(W))}$.
For kernelizations with respect to the parameters $\opt$ and $W$, the best known polynomial algorithm achieves a kernel of size $3 W \cdot \opt$~\cite{DBLP:conf/esa/Casel0INZ21}.
A kernel of size $2 W \cdot \opt$ is provided in~\cite{DBLP:conf/iwpec/KumarL16} in a runtime of $n^{\O(1)} \cdot 2^{\O(W)}$ by using linear programming methods.
That is, for the cover problem, ie.~$W=1$, they obtain a $2 \cdot \opt$ size-kernel, showing that under the assumption the unique games conjecture is true, $2W \cdot \opt$ is the best kernel we can hope for.
Finally, the best known approximation algorithm also uses linear programming methods and has a gap guarantee of $\O(\log(W))$ with a running time of $2^{\O(W)} \cdot n^{\O(1)}$~\cite{DBLP:journals/corr/Lee16c}. 
They also showed that the superpolynomial dependence on $W$ may be needed to achieve a polylogarithmic approximation.

\paragraph{\textbf{Our Contribution:}}
Of particular interest in our work are kernelizations and the reducible structures used for them.
We show that in expectation the algorithms make incremental progress in finding such structures and beyond.
Compared to the vertex cover problem, kernelization algorithms that are linear in $\opt$ for the $W$-separator are more complicated (cf.~\cite{DBLP:journals/corr/Xiao16b,DBLP:conf/esa/Casel0INZ21,DBLP:conf/iwpec/KumarL16}).
The current best known kernelization of the $W$-separator uses linear programming methods and requires a non-trivial post-process to extract the reducible structures~\cite{DBLP:conf/iwpec/KumarL16}.
The challenge in this paper is to show that natural objectives and simple as well as problem-independent mutations are also capable of extracting them.
To this end, we add additional structural features to the reducible structures used in~\cite{DBLP:conf/iwpec/KumarL16}.
Essentially, our results show that evolutionary algorithms with different objectives guide the search and admit fixed parameterized runtimes to solve or approximate (even arbitrarily close) the $W$-separator problem.
 
The different runtimes are given in this paper in terms of the number of iterations, but the tractability with respect to the considered parameters also applies when we include search point evaluations.
In the following, we roughly describe the runtimes achieved with respect to the search point evaluations for exact and approximate solutions, where all results are given in expectation.
We consider simple and problem-independent evolutionary algorithms in combination with three different multi-objective fitness functions.
The first consists of relatively simple calculations to evaluate the search points and allows us to achieve a running time of $n^{\O(1)} \cdot 2^{\O(\opt^2 \cdot W^2)}$ to find an optimal solution. 
For the second and third fitness functions, stronger objectives are used in the sense of applying linear programming methods.
We prove that with such evaluations the optimal solution can be found in time $n^{\O(1)} \cdot 2^{\O(\opt \cdot W)}$.  
Moreover, depending on the choice of an $\varepsilon \in [0,1)$ we obtain solutions arbitrary close to an optimal one, where the according algorithm is tractable with respect to the parameters $\opt$ and $W$.
As usual, the larger $\varepsilon$, the worse the gap guarantee, but with better running time, where $\varepsilon=0$ corresponds to the above running time in finding an optimal solution.
This result shows that we can hope for a gradual progress until an optimal solution is reached.

Finally, our results show that in expectation evolutionary algorithms are not far away from the problem-specific ones, with the literature review showing that up to constant factors the evolved algorithms are close to the lower bounds for the $W$-separator problem.	
	
	\section{Preliminaries}\label{sec::prelim}

\paragraph{\textbf{Graph Terminology}}
We begin with a brief introduction to the graph terminology we use in this paper.
Let $G=(V,E)$ be a graph.
For a subgraph $G'=(V',E')$ of $G$ we use $V(G')$ and $E(G')$ to denote $V'$ and $E'$, respectively.
We define the \emph{size} of a subgraph $G' \subseteq G$ as the number of its vertices, where we denote the size of $G$ by $n$.
For $v \in V$ we define $N(v)$ as its neighborhood, and $d(v)$ as the degree of $v$.
For a vertex subset $V' \subseteq V$ we define $G[V']$ as the induced subgraph of $V'$, $G-V' := G[V \setminus V']$ and $N(V') := \left(\bigcup_{v \in V'} N(v)\right) \setminus V'$.
Finally, in the context of this work, we also use directed graphs in the sense of flow networks, where we move the corresponding terminology to the appendix next to the proofs.

\paragraph{\textbf{Parameterized Terminology}}
We use the standard terminology for parameterized complexity, which is also used, for example, in \cite{downey2012parameterized,fomin2019kernelization}.
A \emph{parameterized problem} is a decision problem with respect to certain instance parameters.
Let $I$ be an instance of a parameterized problem with an instance parameter $k$, usually given as a pair $(I,k)$.
If for each pair $(I,k)$ there exists an algorithm that solves the decision problem in time $f(k) \cdot |I|^c$, where $f$ is a computable function and $c$ is a constant, then the parameterized problem is \emph{fixed-parameter tractable}.
We say $(I,k)$ is a \emph{yes-instance} if the answer to the decision problem is positive, otherwise we say $(I,k)$ is a \emph{no-instance}.

Of particular interest in this work are kernelizations, which can be roughly described as formalized preprocessings.
More formally, given an instance $(I,k)$ of a parameterized problem, a polynomial algorithm is called a kernelization if it maps any $(I,k)$ to an instance $(I',k')$ such that $(I',k')$ is a yes-instance if and only if $(I,k)$ is a yes-instance, $|I'| \leq g(k)$, and $k' \leq g'(k)$ for computable functions $g,g'$.

The idea of parameterized complexity can be extended by combining multiple parameters.
That is, if we consider an instance $I$ with parameters $k_1, \dots, k_m$, then we are interested in algorithms that solve the corresponding decision problem in a runtime of $f(k_1, \dots, k_m) \cdot |I|^c$, where $f$ is a computable function and $c$ is a constant.
We refer to runtimes that satisfy this type of form as \emph{FPT-times}. 

\paragraph{\textbf{Problem Statement and Objectives}}
First we introduce the \textit{$W$-separator problem}.
Given is a graph $G=(V,E)$ and two positive integers $k$ and $W$.
The challenge is to find a vertex subset $V' \subseteq V$, such that $V'$ has cardinality at most $k$ and the removal of $V'$ in $G$ leads to a graph that contains only connected components of size at most $W$.
The minimization problem is to find $V'$ with the smallest cardinality, where we denote the optimal objective value by $\opt$.
Note that we can reformulate the problem statement by demanding that $V'$ intersects with each connected subgraph of $G$ of size $W+1$.
In the case $W=1$ a separator needs to cover each edge, which shows that the $W$-separator problem is a natural generalization of the well-known vertex cover problem.

In terms of evolutionary algorithms, a solution to the $W$-separator problem can be represented in a bit sequence of length $n$. 
Each vertex has value zero or one, where one stands for the vertex being part of the $W$-separator.
Let $\{0,1\}^{n}$ be our solution space.
We work with multi-objective evolutionary algorithms, which evaluate each search point $X \in \{0,1\}^{n}$ using a fitness function $f \colon \{0, 1\}^{n} \to \R^m$ with $m$ different objectives.
The goal is to minimize each of the objectives.
Denote by $f^i(X)$ the $i$-th objective, evaluated at a search point $X$.
For two search points $X_1$ and $X_2$, we say $X_1$ \emph{weakly dominates} $X_2$ if $f^i(X_1) \leq f^i(X_2)$ for every $i \in [m]$, where $[m]$ is defined as the set $\{1,\dots,m\}$.
In this case, we simply write $f(X_1) \leq f(X_2)$.
If additionally $f(X_1) \neq f(X_2)$, then we say that $X_1$ \emph{dominates} $X_2$. 
We distinguish between pareto optimal search points $X$ and vectors $f(X)$. 
A pareto optimal search point is a search point that is not even weakly dominated by any other search point, whereas a pareto optimal vector is not dominated by any other vector.
That is, if $f(X_1)$ is a pareto optimal vector, then there can be a vector $X_2 \neq X_1$ with $f(X_2) = f(X_2)$, whereas if $X_1$ is a pareto optimal search point, then there is no search point $X_2 \neq X_1$ with $f(X_1) = f(X_2)$.

For some fitness functions we investigate, we use a linear program to evaluate the search points.
Let $G=(V,E)$ be an instance of the $W$-separator and let $y_v \in \{0,1\}$ be a variable for each $v \in V$.
An integer program (IP) that solves the $W$-separator problem can be formulated as follows:
\begin{align*}
	\min &\sum_{v \in V} y_v\\
	&\sum_{v \in S} y_v \geq 1, \forall S \subseteq V \colon  |S| = W+1 \text{ and } G[S] \text{ is connected }
\end{align*}
We will consider the relaxed version of the IP by allowing fractional solutions and consider the corresponding linear program (LP).
That is, instead of $y_v \in \{0,1\}$ we have $y_v \geq 0$ for all $v \in V$.
In the rest of this paper we will call it the \emph{$W$-separator LP}.
We define $\lpPrim(G')$ for a subgraph $G' \subseteq G$ as the objective of the $W$-separator LP with $G'$ as input graph.
If we put every connected subgraph of size $W+1$ as constraint in the LP formulation of the $W$-separator, then we end up with a running time of $n^{\O(W)}$.
However, as mentioned already in Fomin et.~al.~\cite{fomin2019kernelization}~(Section 6.4.2) finding an optimal solution for the LP can be sped up to a running time of $2^{\O(W)} n^{\O(1)}$.
Roughly speaking, the idea is to use the ellipsoid method with separation oracles to solve the linear program, where the separation oracle uses a method called color coding that makes it tractable in $W$.

Next, we define few additional terms before we get to the multiobjective fitness functions.
Let $X \in \{0,1\}^n$ be a search point.
For $v \in V$ we define $x_v \in \{0,1\}$ as the corresponding value in the bit-string $X$. 
We denote by $X_1 \subseteq V$ the vertices with value one.
We define $u(X)$ as the set of vertices that are in components of size at least $W+1$ after the removal of $X_1$ in $G$.
The function $u(X)$ can be interpreted as the uncovered portion of the graph with respect to the vertices $X_1$.
The fitness functions we work with are the following:

\begin{itemize}
	\item $f_1(X) := \left(|X_1|, |u(X)|, -\sum_{v \in X_1} d(v)\right)$
	\item $f_2(X) := \left(|X_1|, |u(X)|, \lpPrim(G[u(X)])\right)$
	\item $f_3(X) := \left(|X_1|, \lpPrim(G[u(X)])\right)$
\end{itemize}

As the names suggest, we use \emph{one-objective}, \emph{uncovered-objective}, \emph{degree-objective} and \emph{LP-objective} to denote $|X_1|,|u(X)|, -\sum_{v \in X_1} d(v)$ and $\lpPrim(G[u(X)])$ respectively.
Note that the fitness $f_3$ is same as $f_2$ without the uncovered-objective.
Furthermore, we use $*$ to denote that an objective can be chosen arbitrarily, for instance in $(|X_1|,*,-\sum_{v \in X_1} d(v))$ the uncovered-objective $u(X)$ is arbitrarily.

\paragraph{\textbf{Algorithms}}
We proceed by presenting the algorithms that we study.
All of them are based on \algGlobalSemo (see Algorithm \ref{alg:GlobalSemo}), which maintains a population $\P \subseteq \{0, 1\}^{n}$ of $n$-dimensional bit strings.

\begin{algorithm}
	\caption{\algGlobalSemo} \label{alg:GlobalSemo}
	
	Choose $X \in \{0,1\}^n$ uniformly at random
	
	$\P \gets \{X\}$
	
	\While{stopping criterion not met}
	{
		Choose $X  \in \P$ unformly at random
		
		$Y \gets$ flip each bit of $X$ independently with probability $1/n$
		\label{algGlobSemo::Mutation}

		If $Y$ is not dominated by any other search point in $\P$, include $Y$ into $\P$ and delete all other bit strings $Z \in \P$ which are weakly dominated by $X$ from $\P$, i.e.~those with $f(Y) \leq f(Z)$.
	}	
\end{algorithm}

\begin{algorithm}
	\caption{\algAltMut} \label{alg:altMut}
	
	Choose $b \in \{0,1,2\}$ uniformly at random
	
	\uIf{$b=2$ \textbf{and} $u(X) \neq \varnothing$}
	{
		$Y \gets$ for $v \in u(X)$ flip each bit $x_v$ with probability $1/2$ 
	}
	
	\uElseIf{$b=1$ \textbf{and} $X_1 \neq \varnothing$}
	{
		$Y \gets$ for $v \in X_1$ flip each bit $x_v$ with probability $1/2$ 
	}
	
	\Else
	{
		$Y \gets$ flip each bit of $X$ independently with probability $1/n$
	}
\end{algorithm}

We define the Algorithm \algGlobalSemoAlt similarly to the Algorithm \algGlobalSemo (see Algorithm~\ref{alg:GlobalSemo})  
with the difference that the mutation in line~\ref{algGlobSemo::Mutation} is exchanged by \algAltMut (see Algorithm~\ref{alg:altMut}). 
The following two lemmas will be useful throughout the whole paper.
Their proofs are similar to some appearing in~\cite{DBLP:journals/algorithmica/KratschN13}, and due to space constraints we have moved them to the appendix (Section~\ref{appendix::prelim}).

\begin{lemma}
	\label{lemma::singleFlipAndBoundedPop}
	Let $\P \neq \varnothing$ be a population for the fitness functions $f_1$ and $f_2$.
	In the Algorithms \algGlobalSemo and \algGlobalSemoAlt,
	selecting a certain search point $X \in \P$ has probability $\Omega(1/n^2)$, and additionally flipping only one single bit in it has probability $\Omega(1/n^3)$.
\end{lemma}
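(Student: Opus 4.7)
The plan is to separately bound the selection probability of a fixed $X \in \P$ by $\Omega(1/n^2)$ and the probability of flipping exactly one bit by $\Omega(1/n)$, and then multiply the two bounds.

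For the selection probability I would show that $|\P| = O(n^2)$ under both algorithms and both fitness functions. The key observation is that the first two coordinates of $f_1$ and $f_2$ are $|X_1|$ and $|u(X)|$, both integers in $\{0,1,\dots,n\}$. By the acceptance rule of \algGlobalSemo, a newly included $Y$ removes every $Z \in \P$ with $f(Y) \leq f(Z)$; in particular, if two search points agreed on the first two coordinates, then the one with the smaller (or equal) third coordinate would weakly dominate the other and the latter would be discarded. Hence $\P$ contains at most one search point per pair in $\{0,\dots,n\}^2$, giving $|\P| \leq (n+1)^2 = O(n^2)$. Since \algGlobalSemo and \algGlobalSemoAlt share the same selection and replacement rules and differ only in their mutation, this bound carries over unchanged to the latter. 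Uniform random selection then yields $\Pr[\text{select } X] \geq 1/|\P| = \Omega(1/n^2)$.

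For the single-bit-flip probability, in \algGlobalSemo flipping exactly one specific bit of $X$ and no other has probability $(1/n)(1-1/n)^{n-1} \geq 1/(en) = \Omega(1/n)$. In \algGlobalSemoAlt the standard mutation branch of Algorithm~\ref{alg:altMut} is executed whenever the draw $b \in \{0,1,2\}$ equals $0$, which happens with probability $1/3$ independently of $X$, so conditioning on this event gives a single-bit-flip probability of at least $1/(3en) = \Omega(1/n)$. Combining the independent selection and mutation events yields the claimed joint probability $\Omega(1/n^2) \cdot \Omega(1/n) = \Omega(1/n^3)$.

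The argument is largely routine and mirrors the standard population-size bound used for bi-objective \algGlobalSemo in Kratsch and Neumann. No significant obstacle is expected; the only subtlety is to base the population-size bound on the two integer-valued objectives only, rather than attempting to bound the potentially larger-ranged third coordinate (the degree-sum for $f_1$ or the LP value for $f_2$).
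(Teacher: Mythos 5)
Your proposal is correct and follows essentially the same argument as the paper: bound $|\P|\leq (n+1)^2$ via the two integer-valued objectives $|X_1|$ and $|u(X)|$, then combine uniform selection with the single-bit-flip probability $(1/n)(1-1/n)^{n-1}\geq 1/(en)$, losing only the constant factor $1/3$ in \algGlobalSemoAlt. No issues.
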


Let $0^n$ be the search point that contains only zeroes.
Note that once $0^n$ is in the population it is pareto optimal for all fitness functions because of the one-objective.

\begin{lemma}
	\label{lemma::zeroSol}
	Using the the fitness functions $f_1$ or $f_2$, the expected number of iterations of \algGlobalSemo or \algGlobalSemoAlt until the population $\P$ contains the search point $0^n$ is $\O(n^3 \log n)$.
\end{lemma}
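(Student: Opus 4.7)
I would track the minimum one-objective value in the population over time and show it drops to zero via a harmonic-sum drift argument.

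First, I observe that once $0^n$ enters the population under either fitness function it remains forever: both $f_1$ and $f_2$ use $|X_1|$ as their first coordinate, and $0^n$ attains its minimum there, so $0^n$ cannot be weakly dominated by any other search point. Hence it suffices to bound the expected first iteration $T$ at which a mutation produces $0^n$.

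Define $K_t := \min_{X \in \P_t} |X_1|$. I claim $(K_t)_{t \geq 0}$ is non-increasing and equals $0$ iff $0^n \in \P_t$. Indeed, any mutation output $Y$ with $|Y_1| < K_t$ has strictly smaller first objective than every current member of $\P_t$, so it cannot be weakly dominated and is inserted into $\P_{t+1}$; conversely, if $|Y_1| \geq K_t$, then the members of $\P_t$ realizing the minimum are never deleted, since they strictly beat $Y$ on the first objective and so $f(Y) \leq f(\cdot)$ fails for them.

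Now I bound the drift. Fix an iteration with $K_t = k \geq 1$ and let $X^* \in \P_t$ realize the minimum. By Lemma~\ref{lemma::singleFlipAndBoundedPop}, the probability of selecting $X^*$ and flipping a specific single bit of $X^*$ (while leaving every other bit unchanged) is $\Omega(1/n^3)$; for \algGlobalSemoAlt this additionally uses that the ``else'' branch of \algAltMut is entered with probability at least $1/3$ and, conditioned on it, flips any designated single bit with probability $\Theta(1/n)$. Summing over the $k$ one-bits of $X^*$, the probability that the iteration outputs some $Y$ with $|Y_1| = k - 1$ is $\Omega(k/n^3)$, which by the previous paragraph forces $K_{t+1} \leq k - 1$. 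So the expected time for $K$ to strictly decrease from level $k$ is $\O(n^3/k)$, and summing the harmonic series,
\begin{align*}
\E{T} \leq \sum_{k = 1}^{n} \O\!\left(\frac{n^3}{k}\right) = \O(n^3 \log n),
\end{align*}
as claimed.

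The only mild subtlety is verifying the $\Theta(1/n)$ single-specific-bit flip probability for \algGlobalSemoAlt, which is handled by the constant-probability ``else'' branch of \algAltMut; the remainder is a standard harmonic-sum argument analogous to classical single-objective EA runtime bounds.
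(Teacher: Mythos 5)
Your proof is correct and follows essentially the same route as the paper: track the minimum number of one-bits in the population, use the $\Omega(1/n^3)$ select-and-single-flip probability (summed over the $k$ one-bits to get drift $\Omega(k/n^3)$), and sum the resulting harmonic series via fitness-based partitions to obtain $\O(n^3\log n)$. The only cosmetic imprecision is your justification of monotonicity in the case $|Y_1|=K_t$ (the minimizer \emph{can} be deleted then, but only in favour of $Y$ with the same number of ones, so $K_t$ still does not increase); this does not affect the argument.
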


The following lemmas are proven analogously to \cref{lemma::singleFlipAndBoundedPop,lemma::zeroSol} by observing that the worst-case bounds on the population size decrease by a factor of $n$ when using fitness function $f_3$ instead of $f_1$ or $f_2$.

\begin{lemma}
	\label{corollary::singleFlipAndBoundedPop}
	Let $\P \neq \varnothing$ be a population for the fitness function $f_3$.
	In the Algorithms \algGlobalSemo and \algGlobalSemoAlt,
	selecting a certain search point $X \in \P$ has probability $\Omega(1/n)$, and additionally flipping only one single bit in it has probability $\Omega(1/n^2)$.
\end{lemma}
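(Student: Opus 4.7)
The plan is to mirror the proof of Lemma~\ref{lemma::singleFlipAndBoundedPop}, with the only substantive change being a tighter bound on the population size $|\P|$ when the fitness function is $f_3$.

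First, I would establish that $|\P| \leq n+1 = \O(n)$ throughout the run of either algorithm under $f_3$. The archive rule in Algorithm~\ref{alg:GlobalSemo} maintains the invariant that for any two distinct $X_1, X_2 \in \P$, the weak-domination relation $f_3(X_1) \leq f_3(X_2)$ must fail; otherwise the weakly dominated point would have been removed at the insertion step when the dominating point entered. Since the one-objective $|X_1|$ takes integer values in $\{0, 1, \ldots, n\}$, any two distinct population members sharing the same one-objective would have their LP-objectives totally ordered (possibly equal), and thus one would weakly dominate the other, contradicting the invariant. Hence $\P$ contains at most one representative per value of $|X_1|$, giving the claimed bound. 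Compared with $f_1$ and $f_2$, whose first two integer-valued objectives yield a bound of $\O(n^2)$, we save exactly one factor of $n$.

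Second, I would combine this population bound with the uniform selection step in line~4 of Algorithm~\ref{alg:GlobalSemo}: the probability of selecting a specified $X \in \P$ is $1/|\P| = \Omega(1/n)$. For the single-bit-flip event, in \algGlobalSemo the $1/n$-bit-flip mutation flips exactly one specified bit with probability $(1/n)(1-1/n)^{n-1} = \Omega(1/n)$; in \algGlobalSemoAlt the same event happens when the branch $b = 0$ of \algAltMut is chosen (an extra constant factor $1/3$) and the subsequent $1/n$-bit-flip flips only that bit. In either case the combined probability of selecting $X$ and then flipping exactly one specified bit is $\Omega(1/n) \cdot \Omega(1/n) = \Omega(1/n^2)$, as required.

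The main point needing care, rather than any real obstacle, is verifying the population-size invariant: one must check that the weak-domination test in Algorithm~\ref{alg:GlobalSemo} (identical in \algGlobalSemoAlt) prevents two distinct points with equal first coordinate from coexisting in $\P$, even across insertions. This follows immediately from inspection of the insertion/deletion rule, so the entire argument reduces to reusing the template of Lemma~\ref{lemma::singleFlipAndBoundedPop} with the single numerical change $|\P| = \O(n)$ in place of $|\P| = \O(n^2)$.
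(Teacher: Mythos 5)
Your proposal is correct and matches the paper's own argument: the paper proves this lemma exactly by noting that under $f_3$ only the one-objective is integer-valued and discrete, so the population bound drops from $(n+1)^2$ to $n+1$, and the rest of the proof of Lemma~\ref{lemma::singleFlipAndBoundedPop} carries over verbatim. Your verification that two distinct points with equal first coordinate cannot coexist (since their LP-objectives are totally ordered) is the right justification for the population bound.
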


\begin{lemma}
	\label{corollary::zeroSol}
	Using the the fitness function $f_3$, the expected number of iterations of \algGlobalSemo or \algGlobalSemoAlt until the population $\P$ contains the search point $0^n$ is $\O(n^2 \log n)$.
\end{lemma}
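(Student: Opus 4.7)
The plan is to mirror the proof of Lemma~\ref{lemma::zeroSol}, using the tighter selection/mutation probabilities of Lemma~\ref{corollary::singleFlipAndBoundedPop} in place of those of Lemma~\ref{lemma::singleFlipAndBoundedPop}. The improvement is available because $f_3$ has only two coordinates: for each value of the first objective $|X_1| \in \{0,\dots,n\}$ at most one Pareto representative can coexist in $\P$ (a second one would be weakly dominated on the $\lp$-coordinate), so $|\P| \le n+1$ throughout the run.

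I would track the potential $k^{\star} := \min_{X \in \P} |X_1|$. Since acceptance only occurs for non-dominated offspring, $k^{\star}$ is monotonically non-increasing, and a strict decrease is exactly the event that an offspring with first coordinate below $k^{\star}$ is produced. At any iteration with $k^{\star}>0$, there is a unique population member $X^{\star}$ attaining the minimum. Lemma~\ref{corollary::singleFlipAndBoundedPop} lower-bounds the probability of selecting $X^{\star}$ by $\Omega(1/n)$. Under the plain bit-flip operator used in line~\ref{algGlobSemo::Mutation} of \algGlobalSemo (and in the $b=0$ branch of \algAltMut, chosen with probability $1/3$), the probability of flipping exactly one of the $k^{\star}$ one-bits of $X^{\star}$ and no other bit is $\Omega(k^{\star}/n)$; the resulting offspring $Y$ has $|Y_1|=k^{\star}-1$, is therefore non-dominated, and forces $k^{\star}$ to drop by at least one. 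This gives a per-iteration success probability of $\Omega(k^{\star}/n^2)$, so the expected time to push $k^{\star}$ strictly below its current value is $\O(n^2/k^{\star})$.

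Summing over all values of the potential yields
\[
\sum_{k=1}^{n} \O\!\left(\tfrac{n^2}{k}\right) \;=\; \O(n^2 \log n),
\]
matching the claimed bound (the $\O(1)$ cost of initialisation is absorbed since $\P$ is non-empty after the first step). The one point that needs checking, which I expect to be the main (minor) obstacle, is that the extra branches of \algAltMut cannot \emph{raise} $k^{\star}$ nor spoil the uniqueness of $X^{\star}$: both follow because acceptance is governed by Pareto non-dominance with respect to $f_3$, which depends only on $|X_1|$ and $\lp(G[u(X)])$, so any accepted offspring has first coordinate at most $k^{\star}$, and two population members of equal first coordinate would dominate one another. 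With this observation the argument is completely parallel to Lemma~\ref{lemma::zeroSol}, improved from $\O(n^3 \log n)$ to $\O(n^2 \log n)$ by the factor of $n$ saved in the population bound.
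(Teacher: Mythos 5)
Your proposal is correct and follows essentially the same route as the paper: the paper proves this lemma by noting that the argument for \cref{lemma::zeroSol} carries over verbatim once the population bound drops from $(n+1)^2$ to $n+1$ for the two-objective fitness $f_3$, which is exactly the factor-of-$n$ saving you exploit via \cref{corollary::singleFlipAndBoundedPop}. Your fitness-level argument tracking $k^{\star}=\min_{X\in\P}|X_1|$ with per-level waiting time $\O(n^2/k^{\star})$ matches the paper's proof of \cref{lemma::zeroSol} step for step.
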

	
	\section{Analysis for degree-based fitness function}\label{sec::f1}

In this section we investigate the fitness $f_1$ on \algGlobalSemoAlt.
We will prove that the algorithm finds an optimal $W$-separator in expectation in FPT-runtime with the parameters $\opt$ and $W$.
Recall that the parameter $k$ in the decision variant of the $W$-separator asks for a $W$-separator of size at most $k$.
A more general variant, known as \textit{weighted component order connectivity problem}, was studied in~\cite{DBLP:journals/algorithmica/DrangeDH16} by Drange et al.
They achieve a $\O(k^2 W + W^2 k)$ vertex-kernel, which also holds for the $W$-separator problem.

\begin{theorem}[\cite{DBLP:journals/algorithmica/DrangeDH16}, Theorem~15]
	\label{thm::degreeKernel}
	The $W$-separator admits a kernel with at most $kW(k+W)+k$ vertices, where $k$ is the solution size.
\end{theorem}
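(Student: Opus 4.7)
The natural approach is to exhibit a polynomial-time preprocessing that repeatedly applies two reduction rules and then to bound the size of the resulting instance.

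The two reduction rules would be as follows. First (\emph{small component rule}), any connected component $D$ of $G$ with $|D| \le W$ can be deleted, because $D$ contains no connected subgraph of size $W+1$ and so is irrelevant for any $W$-separator. Second (\emph{high-degree rule}), if some vertex $v$ satisfies $d(v) \ge k + W$, then $v$ must lie in every $W$-separator of size at most $k$, and we replace $(G,k,W)$ by $(G-v,\,k-1,\,W)$; if $k$ drops below $0$ we declare a no-instance. The correctness of the high-degree rule is the core structural observation: if $(G,k,W)$ is a yes-instance with solution $S$ and $v \notin S$, then $v$ lies in a component $C$ of $G-S$ with $|C| \le W$, so $N(v) \subseteq S \cup (C\setminus\{v\})$ and therefore $d(v) \le |S| + |C| - 1 \le k + W - 1$, contradicting $d(v) \ge k+W$.

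Now assume the instance $(G',k',W)$ is reduced under both rules and is still a yes-instance; let $S'$ be an optimal solution of size at most $k' \le k$. By the high-degree rule, every vertex has degree at most $k+W-1$. Consider any connected component $C$ of $G'-S'$. Since $|V(C)| \le W$ and $C$ is not a component of $G'$ (otherwise the small-component rule would have deleted it, as $|C| \le W$), there must be at least one edge from $C$ to $S'$. Charging each such component to some neighboring vertex in $S'$ yields at most $|S'| \cdot (k+W) \le k(k+W)$ components of $G'-S'$, each of size at most $W$. Combining, $|V(G')| \le |S'| + kW(k+W) \le k + kW(k+W)$, which is exactly the desired bound.

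The main technical step is justifying the high-degree rule and confirming that no component of $G'-S'$ can be entirely disconnected from $S'$ in the reduced graph; the rest is a counting argument that is routine once degrees are bounded. The likely obstacle in a fully rigorous write-up is keeping the parameter bookkeeping clean: the reductions change $k$, so one must argue that after exhaustively applying Rule B the bound $k(k+W)$ on the number of boundary components is valid with respect to the \emph{original} $k$ (which it is, since $k' \le k$ and $|S'| \le k' \le k$). Polynomial runtime of the kernelization is immediate since each rule strictly decreases $|V| + k$ and both are testable in polynomial time.
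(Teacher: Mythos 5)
Your proposal is correct and follows essentially the same route the paper sketches for this cited result: the high-degree reduction rule (a vertex of degree at least $k+W$ must be in any solution of size at most $k$), removal of small components, and the counting argument that each of the at most $k$ solution vertices borders at most $k+W$ components of size at most $W$. Your explicit justification of the high-degree rule and the observation that every surviving component of $G'-S'$ must touch $S'$ fill in exactly the details the paper leaves to ``a simple calculation.''
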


Essentially, they use the following \emph{reduction rule}: as long as there is a vertex with degree greater than $k + W$, the vertex is included in the solution set and may be removed from the instance.

It is not difficult to see that this vertex must be included in the solution, since otherwise we would have to take more than $k$ vertices from its neighborhood to get a feasible solution.
After using this reduction rule exhaustively each vertex in the reduced instance has degree at most $k+W$.
Consequently, in the reduced instance, each vertex of a $W$-separator is connected to at most $k+W$ connected components after its removal, where each of those components has size at most $W$.  
A simple calculation provides finally the vertex-kernel stated in \cref{thm::degreeKernel}.

Now, we make use of the degree-objective from $f_1$ to find a search point that selects those vertices which can be safely added to an optimal solution according to the reduction rule.

\begin{lemma}
	\label{lemma::reducedInstance_f1}
	Using the fitness function $f_1$, the expected number of iterations of 
	\algGlobalSemoAlt where the population $\P$ contains a solution $X$ in which for all $u \in u(X)$ and for all $v \in X_1$ we have $d(u) \leq \opt+W$ and $d(v) > \opt+W$ is bounded by $\O(n^3(\opt + \log n))$. 
\end{lemma}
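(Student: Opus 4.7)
The plan is to identify a single target search point that certifies the lemma and argue that multi-objective pareto dynamics drive the population to it within the claimed number of iterations. Setting $H := \{v \in V : d(v) > \opt + W\}$, the natural target is $X^*$ with $X^*_1 = H$: every selected vertex has degree above the threshold by construction, and any $u \in u(X^*)$ lies outside $H$ and hence has degree at most $\opt + W$. Moreover, a vertex of degree larger than $\opt + W$ must appear in every optimal solution, since otherwise the remaining component containing it would have size exceeding $W$; thus $|H| \leq \opt$, and it suffices to bound the expected time until $X^* \in \P$ by $\O(n^3(\opt + \log n))$.

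First, by Lemma~\ref{lemma::zeroSol}, after expected $\O(n^3 \log n)$ iterations $\P$ contains $0^n$. The main induction is on $i \in \{0, 1, \ldots, |H|\}$, establishing that the expected time until $\P$ contains a \emph{top-$i$ selector}---a search point $X$ with $|X_1| = i$ and $\sum_{v \in X_1} d(v) = D_i$, where $D_i$ denotes the sum of the $i$ largest vertex degrees of $G$---is $\O(n^3(i + \log n))$. Because vertices in $H$ have strictly larger degree than those in $V \setminus H$, every top-$i$ selector with $i \leq |H|$ satisfies $X_1 \subseteq H$, and the top-$|H|$ selector is uniquely $X^*$.

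The inductive step splits into monotonicity and progress. For monotonicity, any $Z$ that weakly dominates a top-$i$ selector $X$ satisfies $|Z_1| \leq i$ and $\sum_{v \in Z_1} d(v) \geq D_i$; since $D_j$ strictly increases in $j$ on $\{0, \ldots, |H|\}$ (because $d_j > \opt + W > 0$ for $j \leq |H|$) and $D_i$ is the maximum degree sum over any $i$-subset of $V$, this forces $|Z_1| = i$ and $\sum_{v \in Z_1} d(v) = D_i$, so $Z$ is itself a top-$i$ selector. For progress, given a top-$i$ selector $X \in \P$ with $i < |H|$, the pigeonhole principle yields a vertex $v \notin X_1$ with $d(v) = d_{i+1}$: at least $i+1$ vertices have degree $\geq d_{i+1}$ while $|X_1| = i$, and a short case analysis on whether $d_i > d_{i+1}$ or $d_i = d_{i+1}$ shows that one can always choose this vertex outside $X_1$ with exact degree $d_{i+1}$. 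By Lemma~\ref{lemma::singleFlipAndBoundedPop}, selecting $X$ and flipping exactly bit $v$ happens with probability $\Omega(1/n^3)$ per iteration, producing an offspring $Y$ with $|Y_1| = i+1$ and $\sum_{v \in Y_1} d(v) = D_{i+1}$. Either $Y$ enters $\P$, or it is rejected because of a dominating search point already in $\P$ which, by the same extremality argument, is itself a top-$(i+1)$ selector.

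Summing the $\O(n^3)$ expected cost of each inductive step for $i = 0, \ldots, |H| - 1$ with the initial $\O(n^3 \log n)$ cost of reaching $0^n$ yields the overall bound $\O(n^3(\opt + \log n))$. The main obstacle is the extremality argument in the presence of degree ties: it has to be checked both that the ``next'' vertex to flip in is available when top-$i$ selectors are not unique, and that domination cannot leak the population away from top-$i$ selectors to selectors containing low-degree vertices, both of which hinge on $D_i$ being strictly increasing in $i$ and maximal over $i$-vertex subsets of $V$.
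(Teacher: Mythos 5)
Your proposal is correct and takes essentially the same approach as the paper's proof: the paper likewise targets the set of vertices of degree greater than $\opt+W$, shows the intermediate search points selecting the $i$ highest-degree vertices are pareto optimal via the degree-objective, and advances by single-bit flips at cost $\O(n^3)$ per level, summed over at most $\opt$ levels on top of the $\O(n^3\log n)$ time to reach $0^n$. Your treatment of degree ties and of rejection by an already-dominating top-$(i+1)$ selector is somewhat more explicit than the paper's, but the argument is the same.
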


With \cref{lemma::reducedInstance_f1} in hand we can upper bound the expected number of iterations that \algGlobalSemoAlt takes to find an optimal $W$-separator with respect to the fitness $f_1$.
Note that the uncovered-objective of $f_1$ ensures that the algorithms \algGlobalSemoAlt converge to a feasible solution and that a search point $X$ with $f_1(X) = (\opt, 0, *)$ corresponds to an optimal $W$-separator.   

\begin{theorem}
	\label{thm::fitness1Opt}
	Using the fitness function $f_1$, the expected number of iterations of \algGlobalSemoAlt until it finds a minimum $W$-separator in $G=(V,E)$ is upper bounded by $\O\left(n^3(\opt + \log n) + n^2 \cdot 2^q \right)$, where $q=\opt \cdot W(\opt+W)+\opt$.
\end{theorem}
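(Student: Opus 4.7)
The plan is to combine \cref{lemma::reducedInstance_f1} with a single-mutation argument, splitting the analysis into two phases.

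Phase~1 uses \cref{lemma::reducedInstance_f1} directly: within $\O(n^3(\opt + \log n))$ expected iterations the population contains a search point $X^*$ such that every $v \in X^*_1$ satisfies $d(v) > \opt + W$ and every $u \in u(X^*)$ satisfies $d(u) \leq \opt + W$. I would then derive two structural consequences needed in Phase~2. First, every vertex $v$ with $d(v) > \opt + W$ belongs to every $W$-separator $S$ of size at most $\opt$: otherwise $v$'s component in $G - S$ would have size at most $W$, forcing at least $d(v) - (W - 1) > \opt$ of $v$'s neighbors to lie in $S$, contradicting $|S| \leq \opt$. Hence $X^*_1 \subseteq \opt^*$ for every optimal $\opt^*$. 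Second, $\opt^* \setminus X^*_1 \subseteq u(X^*)$: since $X^*_1 \subseteq \opt^*$ the graph $G - \opt^*$ is a subgraph of $G - X^*_1$, so any $v \in \opt^* \setminus X^*_1$ already sitting in a component of size at most $W$ in $G - X^*_1$ would also sit in such a component in $G - (\opt^* \setminus \{v\})$, letting us drop $v$ and contradicting the minimality of $\opt^*$. Finally, applying \cref{thm::degreeKernel} to the residual instance $G - X^*_1$, which admits a $W$-separator of size at most $\opt$ and in which every vertex of $u(X^*)$ has degree at most $\opt + W$, yields $|u(X^*)| \leq q$.

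In Phase~2, I would bound the expected time needed to turn $X^*$ into $\opt^*$ via a single mutation. Conditioning on selecting $X^*$ from $\P$ (probability $\bigOmega{1/n^2}$ by \cref{lemma::singleFlipAndBoundedPop}) and on choosing $b = 2$ in \algAltMut (probability $1/3$), every bit of $u(X^*)$ is flipped independently with probability $1/2$ while every other bit of $X^*$ is kept. Since $X^*_1 \cap u(X^*) = \varnothing$ and every vertex of $u(X^*)$ has value $0$ in $X^*$, the offspring $Y$ satisfies $Y_1 = X^*_1 \cup S$ for a uniformly random $S \subseteq u(X^*)$. The choice $S = \opt^* \setminus X^*_1 \subseteq u(X^*)$ produces $Y = \opt^*$, an event of probability at least $2^{-|u(X^*)|} \geq 2^{-q}$. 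Combining these factors, the per-iteration probability of producing $\opt^*$ is $\bigOmega{2^{-q}/n^2}$, so in expectation $\O(n^2 \cdot 2^q)$ iterations suffice. Summing the two phases gives the stated bound $\O(n^3(\opt + \log n) + n^2 \cdot 2^q)$.

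The main obstacle I expect is the persistence of $X^*$: a dominating offspring $Y$ could evict $X^*$ from $\P$ before the single good mutation fires, and it is not immediate that $Y$ inherits the Phase~2 invariants $Y_1 \subseteq \opt^*$ and $\opt^* \setminus Y_1 \subseteq u(Y)$ with $|u(Y)| \leq q$. I would resolve this by a careful case distinction on how $Y$ can weakly dominate $X^*$ under $f_1$, using the inherited bound $|u(Y)| \leq |u(X^*)| \leq q$ together with the degree-objective constraint $\sum_{v \in Y_1} d(v) \geq \sum_{v \in X^*_1} d(v)$ to argue that $Y$ sits in a region of the search space from which the same one-mutation reasoning reaches $\opt^*$ with probability $\bigOmega{2^{-q}/n^2}$. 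Since the pareto front of $f_1$ has only polynomially many achievable objective vectors, such rebootings occur only polynomially many times and their total cost is absorbed into the Phase~1 bound.
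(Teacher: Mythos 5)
Your proposal matches the paper's proof: Phase~1 is \cref{lemma::reducedInstance_f1} plus the kernel bound of \cref{thm::degreeKernel} giving $|u(X^*)| \le q$, and Phase~2 is the single $b=2$ mutation flipping exactly $\opt^* \setminus X^*_1 \subseteq u(X^*)$ with probability at least $\tfrac{1}{3}\cdot 2^{-q}$, multiplied by the $\Omega(1/n^2)$ selection probability. The persistence obstacle you flag at the end dissolves more simply than your sketched case distinction: since every vertex of $X^*_1$ has degree greater than $\opt + W$ and every other vertex has degree at most $\opt + W$, the set $X^*_1$ is the unique maximizer of $\sum_{v \in Y_1} d(v)$ over all $Y$ with $|Y_1| \le |X^*_1|$, so no $Y \ne X^*$ can weakly dominate $X^*$; hence $X^*$ is a pareto optimal search point that is never deleted, which is exactly the one-line observation with which the paper opens its proof.
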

	
		\section{Analysis for LP-based fitness function}\label{sec::f2}

In this section we investigate $f_2$ on \algGlobalSemoAlt.
The main result of this section is the following theorem.

\begin{theorem}
	\label{thm::OptSolf2}
	Let $G=(V,E)$ be an instance of the $W$-separator problem.
	Using the fitness function $f_2$, the expected number of iterations of \algGlobalSemoAlt until an optimal solution is sampled is upper bounded by $\O(n^3(\log n + \opt) + n^2 \cdot 4^{\opt \cdot W} )$.
\end{theorem}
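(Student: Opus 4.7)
The plan is to bound the expected time in three phases that track the evolution of the Pareto front under $f_2$. Phase~1 plants $0^n$ in the population, Phase~2 grows it into an ``LP-reduced'' search point $X^*$ whose residual graph is at most a Kumar--Lokshtanov-style $2W\opt$ kernel, and Phase~3 samples the optimum from that kernel via a single application of \algAltMut.

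\textbf{Phase 1 ($\O(n^3 \log n)$).} By \cref{lemma::zeroSol}, after $\O(n^3 \log n)$ expected iterations the population $\P$ contains $0^n$. Because $0^n$ uniquely minimises the one-objective, it is Pareto-optimal under $f_2$ and cannot be removed.

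\textbf{Phase 2 ($\O(n^3 \opt)$).} Fix some optimal separator $\opt$. The plan is to reach a search point $X^* \in \P$ with $X^*_1 \subseteq \opt$ and $|u(X^*)| \leq 2W \opt$. The structural lemma to prove is: for every $X \in \P$ with $X_1 \subsetneq \opt$ and $u(X) \neq \varnothing$, there exists a vertex $v \in V \setminus X_1$ whose single flip produces $X'$ such that $f_2(X')$ weakly dominates $f_2(X)$ and strictly improves in at least one coordinate — either $|u(\cdot)|$ strictly decreases, or, failing that, $\lpPrim(G[u(\cdot)])$ strictly decreases because $v$ belongs to a ``reducible'' structure forced into every optimal LP solution. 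By \cref{lemma::singleFlipAndBoundedPop} a prescribed single-bit flip occurs per iteration with probability $\Omega(1/n^3)$, and at most $\opt$ such improving flips chain $0^n$ to $X^*$, yielding $\O(n^3 \opt)$ iterations for this phase. The residual bound $|u(X^*)| \leq 2W \opt$ then follows from the LP-sizing argument of \cite{DBLP:conf/iwpec/KumarL16}: once all reducible vertices have been absorbed into $X^*_1$, every remaining vertex carries LP weight at least $1/(2W)$, and the total LP weight of $G[u(X^*)]$ is at most $\opt - |X^*_1| \leq \opt$.

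\textbf{Phase 3 ($\O(n^2 \cdot 4^{\opt \cdot W})$).} With $X^*$ in $\P$, the uncovered branch $b = 2$ of \algAltMut is triggered with constant probability and flips each bit of $u(X^*)$ independently with probability $1/2$. Hence the specific set $\opt \setminus X^*_1 \subseteq u(X^*)$ is flipped with probability $2^{-|u(X^*)|} \geq 2^{-2W\opt} = 4^{-\opt \cdot W}$, producing a valid optimum $W$-separator in a single step. Combined with the $\Omega(1/n^2)$ probability of selecting $X^*$ from $\P$ (\cref{lemma::singleFlipAndBoundedPop}), each iteration hits the optimum with probability $\Omega(1/(n^2 \cdot 4^{\opt \cdot W}))$, so the expected waiting time is $\O(n^2 \cdot 4^{\opt \cdot W})$. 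Summing the three phases gives the claimed bound.

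\textbf{Main obstacle.} The hard part is the structural lemma driving Phase~2: proving that some single-bit flip always makes Pareto-progress under $f_2$ until a kernel of size $2W \opt$ is reached. Since the Kumar--Lokshtanov kernel is extracted from the LP only via a non-trivial post-processing step, the proof has to enrich that kernel with additional structural features whose presence is already certified by a drop of $\lpPrim(G[u(X)])$ alone. This is precisely the ``additional structural features'' theme advertised in the introduction, and it is the only step where one cannot simply quote \cite{DBLP:conf/iwpec/KumarL16} as a black box.
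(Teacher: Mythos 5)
Your three-phase skeleton matches the paper's proof in outline (LP-guided growth of the $1$-bits, a residual of size at most $2W\cdot\opt$, and a final $b=2$ jump costing $4^{-\opt\cdot W}$ after an $\Omega(1/n^2)$ selection), and your Phase~3 is essentially identical to the paper's. The gap is exactly where you located it, but it is larger than you suggest, and your sketched resolution is the very step the paper says cannot be done directly. Your Phase~2 asserts that one can chain single-bit flips from $0^n$ to an $X^*$ with $X^*_1$ contained in a fixed optimal separator, the flipped vertex each time being ``forced into every optimal LP solution.'' The paper explicitly notes that it is \emph{unknown} whether a vertex with value one in an optimal fractional $W$-separator belongs to an optimal integral solution; so the invariant $X_1\subseteq\opt$ cannot be maintained along the LP-guided chain. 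What the paper actually proves (\cref{lemma::firstStepOptLP}) is the weaker invariant $\lpPrim(G)=|X_1|+\lpPrim(G[u(X)])$ together with the absence of residual LP-value-one vertices; the resulting $X_1$ may contain vertices outside every optimal solution. Recovering a safe set of $1$-bits then requires two further ingredients you do not have: (i) a characterization showing that under this invariant the heads $\aall$ of a sequence of minimal strictly reducible pairs satisfy $\aall\subseteq X_1$ and $\ball\cap X_1=\varnothing$ (\cref{lemma::ingredientParateo1}, which rests on the new packing and flow lemmas \cref{lemma::PackingSizeLargeEnough,lemma::decideWhichA,lemma::SizeNeoghborhood,lemma::CrownStaysCrown}), and (ii) a \emph{non-local} mutation step using the $b=1$ branch of \algAltMut to jump from $X_1$ down to exactly $\aall$, which costs an extra $2^{\opt}$ term (\cref{lemma::XReducedInPop}) that single-bit flips cannot replace.

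A second, related omission: you never argue that $X^*$ persists in the population while waiting for the exponentially rare Phase~3 event. The paper needs \cref{lemma::paretoOptiSol} — a genuinely delicate argument combining the uncovered-objective with \cref{lemma::SizeNeoghborhood,lemma::balancedExpansionStrictlyReducibleExist} — to show that the point with $X'_1=\aall$ is Pareto optimal and hence never deleted. Finally, your claim that after reduction ``every remaining vertex carries LP weight at least $1/(2W)$'' is not how the $2W\cdot\opt$ bound is obtained (and is not justified); the paper gets $|u(X')|\le 2\cdot\opt\cdot W$ from the existence lemma for strictly reducible pairs (\cref{lemma::existenceReducible}) applied to the irreducible residual graph. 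In short: correct architecture, correct Phase~1 and Phase~3, but the heart of the theorem — turning LP-forced vertices into a provably safe, persistent partial solution — is missing and is not obtainable by the single-bit-flip argument you propose.
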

 
First we give a brief overview of a reducible structure concerning the $W$-separator problem associated with the objectives in the fitness function $f_2$.   
The structure we will use is commonly known as crown decomposition.
Roughly speaking, it is a division of the set of vertices into three parts consisting of a crown, a head, and a body, with the head separating the crown from the body.
Under certain conditions concerning the crown and head vertices, which we will clarify in a moment, it is possible to show that there exists an optimal $W$-separator which contains the head vertices and reduces the given instance by removing the crown vertices.
Recall that the parameter $k$ in the decision variant of the $W$-separator asks for a $W$-separator of size at most $k$.
Kumar and Lokshtanov~\cite{DBLP:conf/iwpec/KumarL16} provide such a reducible structure and state that it is in a graph as long as the size of it is greater than $2kW$.
The structure is called a (strictly) reducible pair and consists of crown and head vertices.

For an instance $G=(V,E)$ of the $W$-separator problem we say that $Y=\{y_v \in \R_{\geq 0}\}_{v \in V}$ is a \emph{fractional $W$-separator} of $G$ if $Y$ is a feasible solution according to the LP formulation of the $W$-separator problem.
It is not difficult to see that the objective of any optimal fractional $W$-separator is smaller than $\opt$, i.e.~$\lpPrim(G) \leq \opt$.
In principle, the LP objective is useful for finding a strictly reducible pair, since the head vertices in an optimal fractional W separator must have value one.
Unfortunately, it is unknown whether each vertex that has value one in an optimal fractional $W$-separator is part of an optimal solution.
This leads to the challenge of filtering out the right vertices, where the uncovered-objective - and in particular the structural properties of a strictly reducible pair - come into play.

\paragraph{\textbf{Reducible Structure of the $\boldsymbol W$-Separator Problem}}
In the following, we briefly summarize the definitions and theorems of \cite{DBLP:conf/esa/Casel0INZ21,fomin2019kernelization,DBLP:conf/iwpec/KumarL16}. 
For a vertex set $B \subseteq V$, denote by $\B$ the partitioning of $B$ according to the connected components of $G[B]$.

\begin{definition}[(strictly) reducible pair]
\label{def::RedPair}
For a graph $G=(V,E)$, a pair $(A,B)$ of vertex disjoint subsets of $V$ is a \emph{reducible pair} if the following conditions are satisfied:
\begin{itemize}
	\item $N(B) \subseteq A$.
	\item
	The size of each $C \in \B$ is at most $W$.
	\item There is an assignment function $g \colon \B \times A \to \N_0$, such that
	\begin{itemize}
		\item for all $C \in \B$ and $a \in A$, if $g(C,a) \ne 0$, then $a \in N(C)$
		\item for all $a \in A$ we have $\sum_{C \in \B} g(C,a) \geq 2W-1$,
		\item for all $C \in \B$ we have  $\sum_{a \in A} g(C,a) \leq |C|$,
	\end{itemize} 
\end{itemize} 
In addition, if there exists an $a \in A$ such that $\sum_{Q \in \B} g(C,a) \geq 2 W$, then $(A,B)$ is a \emph{strictly reducible pair}.
\end{definition}

Next, we explain roughly the idea behind a reducible pair $(A,B)$.
The head and crown vertices correspond to $A$ and $B$, respectively.
That is, we want $A$ to be part of our $W$-separator, and if that is the case, then no additional vertex from $B$ is required to be in the solution since the components $C \in \B$ are isolated after removing $A$ from $G$ with $|C| \leq W$. 
Let $G=(V,E)$ be a graph. 
We say that $P_1, \dots, P_m \subseteq V$ is a \emph{($W+1$)-packing} if for all $i,j \in [m]$ with $i \neq j$ the induced subgraph $G[P_i]$ is connected, $|P_i| \geq W+1$, and $P_i \cap P_j = \varnothing$.
Note that for a $W$-separator $S \subseteq V$, it holds that $S \cap P_i \neq \varnothing$ for all $i \in [m]$.
Thus, the size of a ($W+1$)-packing is a lower bound on the number of vertices needed for a $W$-separator.

\begin{lemma}[\cite{DBLP:conf/iwpec/KumarL16}, Lemma 17]
\label{lemma::packingReduciblePair}
Let $(A,B)$ be a reducible pair in $G$.
There is a ($W+1$)-packing $P_1, \dots, P_{|A|}$ in $G[A \cup B]$, such that $|P_{i} \cap A| = 1$ for all $i \in [|A|]$.
\end{lemma}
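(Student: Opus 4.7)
The plan is to construct, for each head vertex $a \in A$, a set $P_a$ of size $W+1$ inside $G[A \cup B]$ with $P_a \cap A = \{a\}$, such that every $G[P_a]$ is connected and the $P_a$'s are pairwise vertex disjoint (this gives a $(W{+}1)$-packing since the required size is only $\geq W+1$). Equivalently, I need to select $W$ vertices $Q_a \subseteq V(B)$ per head such that $G[\{a\} \cup Q_a]$ is connected and the $Q_a$'s are disjoint. The connectivity condition can be simplified: it suffices that inside every component $C \in \B$ with $Q_a \cap C \neq \varnothing$, the contribution $Q_a \cap C$ is connected in $G[C]$ and contains at least one neighbor of $a$; then all these pieces are attached to $a$ and their union with $\{a\}$ is connected.

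First I would normalize the assignment function $g$ into tight integer demands. Using the slack $\sum_C g(C,a) \geq 2W-1 \geq W$, for each $a \in A$ independently I would round the values $g(\cdot,a)$ down to integers $h(C,a) \leq g(C,a)$ with $\sum_C h(C,a) = W$. Monotonicity preserves the other constraints: $\sum_a h(C,a) \leq \sum_a g(C,a) \leq |C|$ for every $C \in \B$, and $h(C,a) > 0$ still implies $a \in N(C)$.

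The heart of the proof is then a combinatorial claim: for each $C \in \B$ one can partition a subset of $V(C)$ of total size $\sum_a h(C,a)$ into vertex-disjoint connected pieces $\{S_C^a\}$, one for every $a$ with $h(C,a) > 0$, with $|S_C^a| = h(C,a)$ and $S_C^a \cap N(a) \neq \varnothing$. I would prove this by induction on $|C|$ (which is at most $W$ by the reducible-pair definition) using a DFS tree $T_C$ of $G[C]$: process the demands in a leaves-inward order, at each step peeling off a connected subtree of the required size anchored at a still-available neighbor of the chosen $a$ while keeping the remainder of $T_C$ connected. I expect this partitioning to be the main obstacle, because a naive greedy assignment can fail when several heads share the same neighborhood inside a small component; both the slack $|C| - \sum_a h(C,a) \geq 0$ and the fact that $|C| \leq W$ keeps each instance of the partitioning problem small enough to resolve by hand in the induction base and to admit room in the inductive step.

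Finally, I would assemble $P_a := \{a\} \cup \bigcup_{C} S_C^a$. By construction $|P_a| = 1 + \sum_C h(C,a) = W + 1$, and $P_a \cap A = \{a\}$ because every $S_C^a$ lies in $V(B)$. Connectivity of $G[P_a]$ follows from each $S_C^a$ being connected in $G[C]$ together with $S_C^a \cap N(a) \neq \varnothing$, so $a$ is attached to each piece and the pieces form a connected star around $a$. Pairwise disjointness across different heads is immediate, since inside every $C$ the pieces $\{S_C^a\}_a$ come from a vertex-disjoint partition of $V(C)$ and the heads themselves are distinct elements of $A$, yielding the desired $(W{+}1)$-packing $P_1,\dots,P_{|A|}$.
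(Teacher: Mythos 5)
There is a genuine gap, and it sits exactly where you predicted: the per-component partitioning claim at the ``heart of the proof'' is false, not merely hard. Take $C \in \B$ to be a path $v_1 v_2 v_3$ and two heads $a_1, a_2$ whose only neighbor inside $C$ is the cut vertex $v_2$, with demands $h(C,a_1)=1$ and $h(C,a_2)=2$ (nothing in \cref{def::RedPair} forbids this: $g(C,a_i)>0$ only requires $a_i \in N(C)$, i.e.\ adjacency to \emph{some} vertex of $C$, and $\sum_a g(C,a) = 3 \leq |C|$ is satisfied). Then $S_C^{a_1}$ must equal $\{v_2\}$, while every connected $2$-subset of $C$ meeting $N(a_2)$ also contains $v_2$, so no disjoint choice exists. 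The slack $|C| - \sum_a h(C,a)$ is zero here and $|C| \leq W$ gives you nothing; no DFS-tree peeling order can rescue a configuration in which several heads are all anchored at the same cut vertex of a small component. Your rounding step (tightening $\sum_C h(C,a)$ to exactly $W$) is fine on its own, but it forces you to split components among heads, and splitting is precisely what cannot be done.

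The argument in \cite{DBLP:conf/iwpec/KumarL16} (reproduced in the appendix of this paper inside the proof of \cref{lemma::PackingSizeLargeEnough}) avoids this by never splitting a component: it builds the bipartite weighted graph $G_g$ on $A \cup \B$ with weights $g(C,a)$, cancels cycles until $G_g$ is a forest (preserving $\sum_C w_{Ca}$ for each $a$ and $\sum_a w_{Ca} \leq |C|$ for each $C$), roots each tree at a head, and assigns every component \emph{in its entirety} to its parent head. The accounting then shows each head receives components of total size at least $W$: the root keeps all $\geq 2W-1$ units of its assignment, and a non-root head loses only its parent component $C'$, which carries at most $|C'|-1 \leq W-1$ of its weight because $C'$ also sends at least one unit to its own parent. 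Note the packing only needs $|P_i| \geq W+1$, not equality, which is what makes whole-component assignment viable. If you want to keep your framework, you would have to replace the per-component splitting claim by an argument at the granularity of whole components, which essentially forces you back to the forest construction.
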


Essentially, \cref{lemma::packingReduciblePair} provides a lower bound of $|A|$ vertices for a $W$-separator in $G[A \cup B]$.
On the other hand, $A$ is a $W$-separator of $G[A \cup B]$ while $A$ separates $B$ from the rest of the graph.
This properties basically admits the following theorem.

\begin{theorem}[\cite{DBLP:conf/iwpec/KumarL16}, Lemma 18]
\label{thm::saveReduction}
Let $(G,k)$ be an instance of the $W$-separator problem, and $(A,B)$ be a reducible pair in $G$.
$(G,k)$ is a yes-instance if and only if $(G - (A \cup B), k - |A|)$ is a yes-instance.
\end{theorem}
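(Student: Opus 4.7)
The plan is to show both implications of the equivalence by constructive transformations between $W$-separators in $G$ and $W$-separators in $G - (A \cup B)$. The only nontrivial ingredient will be Lemma~\ref{lemma::packingReduciblePair}, which is needed to control the size in the harder direction; everything else reduces to observing that the condition $N(B) \subseteq A$ makes $A$ cut $B$ off from the rest of $G$, and that components inside $B$ are already small.

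For the easier ``if'' direction, I would start from a $W$-separator $S'$ of $G - (A\cup B)$ with $|S'| \leq k - |A|$ and claim that $S := S' \cup A$ is a $W$-separator of $G$ of size at most $k$. Since $A \subseteq S$ and $N(B) \subseteq A$, removing $S$ disconnects $B$ from $V \setminus (A \cup B)$, so every component of $G - S$ lies either entirely in $B$ or entirely in $V \setminus (A \cup B)$. Components of the first type sit inside some $C \in \B$, which has size at most $W$ by the definition of a reducible pair. Components of the second type coincide with components of $(G - (A\cup B)) - S'$, hence also have size at most $W$.

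For the ``only if'' direction, let $S$ be a $W$-separator of $G$ with $|S| \leq k$. The central idea is to swap the portion of $S$ inside $A \cup B$ for $A$ itself: set $S^\star := (S \setminus (A \cup B)) \cup A$. Two properties need to be verified. First, $|S^\star| \leq k$: by Lemma~\ref{lemma::packingReduciblePair} there is a $(W+1)$-packing $P_1, \dots, P_{|A|}$ in $G[A\cup B]$, and since each $G[P_i]$ is connected with $|P_i| \geq W+1$, every $W$-separator of $G$ must intersect every $P_i$. Using that the $P_i$ are pairwise disjoint subsets of $A\cup B$, this yields $|S \cap (A\cup B)| \geq |A|$, and hence
\[
|S^\star| \;=\; |S| - |S \cap (A\cup B)| + |A| \;\leq\; |S| \;\leq\; k.
\]
Second, $S^\star$ is still a $W$-separator of $G$: the same separation argument as in the previous paragraph shows that any component of $G - S^\star$ lies either in $B$ (where it sits inside some $C \in \B$, so of size at most $W$) or in $V \setminus (A\cup B)$ (where $G - S^\star$ and $G - S$ induce the same subgraph, so the component is contained in some component of $G - S$, again of size at most $W$). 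Finally, $S^\star \setminus A = S^\star \cap (V \setminus (A\cup B))$ is a $W$-separator of $G - (A\cup B)$ of size at most $k - |A|$, because $(G - (A\cup B)) - (S^\star \setminus A)$ is an induced subgraph of $G - S^\star$.

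The main obstacle is the size bound $|S^\star| \leq |S|$, where the reducible-pair structure really pays off: the packing lemma is essential to guarantee that any $W$-separator already spends at least $|A|$ vertices inside $A \cup B$, so that exchanging them for the head $A$ cannot make the solution larger. Without that lower bound the swap would be useless; with it, the remaining verifications are routine induced-subgraph bookkeeping driven by $N(B) \subseteq A$ and the bound $|C| \leq W$ for $C \in \B$.
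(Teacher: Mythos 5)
Your proposal is correct and follows the same route the paper sketches for this cited result: use Lemma~\ref{lemma::packingReduciblePair} to force any $W$-separator to spend at least $|A|$ vertices inside $A\cup B$, then exchange that portion for $A$ itself, with $N(B)\subseteq A$ and $|C|\leq W$ for $C\in\B$ handling feasibility. The paper only gives this as a two-sentence sketch (deferring to Kumar and Lokshtanov), and your write-up fills in the details faithfully.
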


Finally, we clarify why a strictly reducible pair exists if the size of $G$ is larger than $2kW$.
To do so, we make use of a lemma derivable from  \cite{DBLP:conf/esa/Casel0INZ21,DBLP:conf/iwpec/KumarL16}.
A proof is given in the appendix~(\cref{appendix::f2}). 

\begin{lemma}
\label{lemma::balancedExpansionStrictlyReducibleExist}
Let $G=\left(A \cup B, E\right)$ be a graph and $W \in \N_0$.
Let $\B$ be the connected components given as vertex sets of $G[B]$, where for each $C \in \B$ we have $|C| \leq W$ and no $C \in \B$ is isolated, i.e.~$N(C) \neq \varnothing$.
If $|B| \geq (2W-1)|A|+1$, then there exists a non-empty strictly reducible pair $(A',B')$, where $A' \subseteq A$ and $B' \subseteq B$.
\end{lemma}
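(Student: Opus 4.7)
The plan is to encode the existence of a strictly reducible sub-pair as a maximum-flow problem on an auxiliary network, and then either augment an integral flow directly or pass to a strictly smaller sub-instance of the lemma.

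I would first construct a flow network $\mathcal{N}$ as follows. Add a source $s$ and a sink $t$; for every component $C \in \B$ add a node $v_C$ and an edge $s \to v_C$ of capacity $|C|$; for every $a \in A$ add a node $u_a$ and an edge $u_a \to t$ of capacity $2W-1$; and whenever $a \in N(C)$ add an edge $v_C \to u_a$ of infinite capacity. The purpose of this construction is that any integral $s$--$t$ flow $g$ saturating every edge $u_a \to t$ is exactly an assignment function in the sense of \cref{def::RedPair}: the source capacities give $\sum_a g(C,a) \leq |C|$, saturation of the sink edges gives $\sum_C g(C,a) \geq 2W-1$, and the fact that $v_C \to u_a$ exists only when $a \in N(C)$ enforces the neighborhood condition on $g$.

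Let $F$ denote the value of a maximum flow, and split into two cases. If $F = (2W-1)|A|$, then every sink edge is saturated, so $g$ already witnesses that $(A,B)$ is a reducible pair. Because $|B| \geq (2W-1)|A|+1 > F$, at least one source edge is not saturated, so some component $C^* \in \B$ satisfies $\sum_a g(C^*,a) < |C^*|$. Since $C^*$ is not isolated I pick any $a^* \in N(C^*)$ and increase $g(C^*,a^*)$ by one; the modified assignment still satisfies every constraint and now gives $\sum_C g(C,a^*) \geq 2W$, so $(A,B)$ is strictly reducible. Otherwise $F < (2W-1)|A|$, and I invoke max-flow / min-cut. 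A minimum cut $(S,T)$ cannot cross any infinite-capacity edge, so $v_C \in S$ forces $N(C) \subseteq A_S$, where $A_S := \{a : u_a \in S\}$. Writing $\mathcal{C}_S := \{C : v_C \in S\}$, the cut has value $|B| - \sum_{C \in \mathcal{C}_S} |C| + (2W-1)|A_S|$, and combining this with $F < (2W-1)|A|$ and $|B| \geq (2W-1)|A|+1$ yields the two decisive inequalities $\sum_{C \in \mathcal{C}_S} |C| \geq (2W-1)|A_S|+1$ and $|A_S| < |A|$. Because no component is isolated, $\mathcal{C}_S$ is non-empty and in turn forces $A_S$ to be non-empty, so the sub-pair $(A_S, \bigcup \mathcal{C}_S)$ satisfies the lemma's hypotheses with a strictly smaller $A$-side.

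Induction on $|A|$ then closes the argument, with base case $|A|=1$: the unique $a \in A$ lies in $N(C)$ for every (non-isolated) $C \in \B$, so setting $g(C,a) = |C|$ immediately yields $\sum_C g(C,a) = |B| \geq 2W$, which is a strictly reducible pair. The main technical obstacle I anticipate is the min-cut bookkeeping in the second case---in particular, deriving $|A_S| < |A|$ strictly from $F < (2W-1)|A|$ so that the induction is well-founded, and checking that a strictly reducible pair found inside the subgraph on $A_S \cup \bigcup \mathcal{C}_S$ is still strictly reducible in the ambient graph $G$. The latter holds because components of $G[B]$ remain components of $G[\bigcup \mathcal{C}_S]$ and $N_G(\bigcup \mathcal{C}_S) \subseteq A_S$, so no extra neighbors are inherited by passing back to $G$.
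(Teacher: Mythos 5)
Your proof is correct, but it takes a genuinely different route from the paper's. The paper does not build a flow network for this lemma at all: it invokes the fractional balanced expansion result of Casel et al.\ (\cref{lemma::balancedExpansionReducibleExist}) as a black box to obtain a partition $A_1 \cup A_2$ of $A$ with $A_1 \neq \varnothing$ and an assignment $g$ with threshold $q = 2W-1$, observes that $\left(A_1,\, \bigcup_{C \in \overrightarrow{\B}_U \cup \overrightarrow{\B}_{A_1}} C\right)$ is already a reducible pair, and then upgrades it to a \emph{strictly} reducible pair by a single counting step: since $\sum_{C\in\B} g(C,a) \leq 2W-1$ for $a \in A_2$, the slack $|B| - \sum_{a \in A_2}\sum_C g(C,a) \geq |A_1|(2W-1)+1$ forces either an already-oversaturated vertex of $A_1$ or an underassigned component whose spare unit can be pushed into $A_1$. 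Your case 1 is essentially this last augmentation step, but you replace the cited expansion lemma by an explicit max-flow/min-cut induction on $|A|$, which makes the argument self-contained; the price is that you must verify the recursion bookkeeping (the strict decrease $|A_S| < |A|$, which as you note needs $2W-1>0$, and the transfer of the reducible-pair property back to $G$, which works because the recursion only ever returns unions of full components of $G[B]$ with $N_G(\bigcup\mathcal{C}_S)\subseteq A_S$). It is worth noting that your network is essentially the same object the paper later calls a $g$-embedded flow network in the proof of \cref{lemma::flowEverywhere}, so your technique is very much in the spirit of the rest of the appendix, just deployed one lemma earlier; the paper's version is shorter given the citation, while yours additionally reproves the existence statement behind \cref{lemma::balancedExpansionReducibleExist}.
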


We conclude the preliminary section with a lemma that connects strictly reducible pairs with the size of the graph.

\begin{lemma}[\cite{fomin2019kernelization}, Lemma~6.14]
\label{lemma::existenceReducible}
Let $(G,k)$ be an instance of the $W$-separator problem, such that each component in $G$ has size at least $W+1$.
If $|V| > 2Wk$ and $(G,k)$ is a yes-instance, then there exists a strictly reducible pair $(A,B)$ in $G$. 
\end{lemma}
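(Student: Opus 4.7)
The plan is to use a hypothetical optimal (or near-optimal) $W$-separator to exhibit the sets required by \cref{lemma::balancedExpansionStrictlyReducibleExist}. Since $(G,k)$ is a yes-instance, fix a $W$-separator $S \subseteq V$ with $|S| \leq k$, and set $B_0 := V \setminus S$. By the defining property of a $W$-separator, every connected component of $G[B_0]$ has size at most $W$, so the first two hypotheses of \cref{lemma::balancedExpansionStrictlyReducibleExist} (applied with $A := S$ and $B := B_0$) are immediate.

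Next I would verify the size condition and the non-isolation condition. A short calculation from $|V| > 2Wk$ and $|S| \leq k$ gives
\[
|B_0| = |V| - |S| > 2Wk - k = (2W-1)k \geq (2W-1)|S|,
\]
so $|B_0| \geq (2W-1)|S|+1$ as required. For the non-isolation condition, suppose some component $C$ of $G[B_0]$ has $N_G(C) = \varnothing$; then $C$ would also be a component of $G$, but by hypothesis every component of $G$ has size at least $W+1$, contradicting $|C| \leq W$. Hence no $C \in \B_0$ is isolated in $G$.

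With all hypotheses met, I would invoke \cref{lemma::balancedExpansionStrictlyReducibleExist} to obtain a non-empty strictly reducible pair $(A', B')$ with $A' \subseteq S$ and $B' \subseteq B_0$. Because \cref{lemma::balancedExpansionStrictlyReducibleExist} is stated for the graph on vertex set $A \cup B$, and here this vertex set is all of $V$ (so the ambient graph really is $G$ itself), the neighborhood condition $N(B') \subseteq A'$ of \cref{def::RedPair} holds in $G$, completing the proof.

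The main subtlety, and the step I would take most care with, is the last one: making sure that applying \cref{lemma::balancedExpansionStrictlyReducibleExist} in the configuration $A = S$, $B = V \setminus S$ yields a strictly reducible pair in the \emph{original} graph $G$, rather than only in some induced subgraph. The observation that $S \cup B_0 = V$, so that the ambient graph in \cref{lemma::balancedExpansionStrictlyReducibleExist} coincides with $G$, resolves this cleanly; the remaining conditions in \cref{def::RedPair} (the assignment function $g$ and the strict-inequality bound at some $a \in A'$) are inherited directly from the output of \cref{lemma::balancedExpansionStrictlyReducibleExist}.
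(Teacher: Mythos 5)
Your proof is correct: the paper itself does not prove this lemma (it is cited from Fomin et al., Lemma~6.14), but your derivation — taking a solution $S$ of size at most $k$ as the head set, checking the size bound $|V\setminus S|\geq (2W-1)|S|+1$ and non-isolation via the component-size hypothesis, and invoking \cref{lemma::balancedExpansionStrictlyReducibleExist} with ambient graph equal to $G$ — is exactly the standard argument and the intended use of that lemma. No gaps; the integrality step and the observation that $N(B')\subseteq A'$ is computed in $G$ itself are handled correctly.
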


\paragraph*{\textbf{Running time analysis}} 
Let $(A,B)$ be a strictly reducible pair. 
We say $(A,B)$ is a minimal strictly reducible pair if there does not exist a strictly reducible pair $(A',B')$ with $A' \subset A$ and $B' \subseteq B$.
Clearly, it can happen that reducible pairs arises after a reduction is executed.
Therefore, we say $(A_1,B_1), \dots, (A_m,B_m)$ is a \emph{sequence of minimal strictly reducible pairs} if for all $i \in [m]$ the tuple $(A_i,B_i)$ is a minimal strictly reducible pair in $G - \bigcup_{j=1}^{i-1} A_j$.
Note that the definition of such a sequence implies that those tuples are pairwise disjoint, i.e., $(A_i \cup B_i) \cap (A_j \cup B_j) = \varnothing$ for all $i,j \in [m]$ with $i \neq j$.
The proof of \cref{thm::OptSolf2} can essentially be divided into three phases:
\begin{enumerate}
\item
\label{part1}
Let $(A_1,B_1), \dots, (A_m,B_m)$ be a sequence of minimal strictly reducible pairs in $G$, such that $G - \bigcup_{i\in [m]} A_i$ contains no minimal strictly reducible pair. 
The first phase is to show that after a polynomial number of iterations of \algGlobalSemoAlt with fitness $f_2$, a search point $X \in \{0, 1\}^n$ exists in the population $\P$, such that $\lpPrim(G) = |X_1| + \lpPrim(G[u(X)])$ and there is a fractional optimal $W$-separator $Y = \{y_v \in \R_{\geq 0}\}_{v \in u(X)}$ with $y_v < 1$ for each $v \in V$.
We will prove that in this case $G[u(X)]$ contains no strictly reducible pair, and that because of the equality relation $\lpPrim(G) = |X_1| + \lpPrim(G[u(X)])$ all the head vertices $A_i$ for $i \in [m]$ are in $X_1$.
That is, there is an optimal $W$-separator which contains a subset of $X_1$.
\item
\label{part2}
The second phase is to filter $\bigcup_{i=1} A_i$ from $|X_1|$ so that we obtain a search point $X'$ that selects only those as 1-bits.
Once an $X$ as described in Phase~{\ref{part1}} is guaranteed to be in the population, the algorithm \algGlobalSemoAlt takes in expectation FPT-time to reach $X'$.
Finally, it is important that $X'$ remains in the population once we have found it.
We show this by taking advantage of the structural properties of a reducible pair in combination with the uncovered-objective.
\item
\label{part3}
For the last phase, we know by \cref{lemma::existenceReducible} already that $u(X')$ has size at most $2 \cdot \opt \cdot W$. 
Once we ensure that $X'$ is in $\P$ and stays there, we prove that \algGlobalSemoAlt finds in expectation an optimal solution in FPT-time.
\end{enumerate}

In phase~\ref{part1}, we essentially make use of the LP objective.
To prove that it works successfully, we will show the following two lemmas.

\begin{lemma}
\label{lemma::firstStepOptLP}
Using the fitness function $f_2$, the expected number of iterations of \algGlobalSemoAlt where the population $\P$ contains a search point $X \in \{0,1\}^n$ such that $\lpPrim(G)=\lpPrim(G[u(X)]) + |X_1|$ and there is an optimal fractional $W$-separator $\{y_v \in \R_{\geq 0}\}_{v \in u(X)}$ of $G[u(X)]$ with $y_v<1$ for every $v \in u(X)$ is upper bounded by $\O(n^3(\log n + \opt))$.
Moreover, once $\P$ contains such a search point at any iteration, the same holds for all future iterations.
\end{lemma}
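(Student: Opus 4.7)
The plan is to split the proof into an existence phase and a persistence phase. For the existence phase I would combine \cref{lemma::zeroSol}, which places $0^n$ into the population within $O(n^3 \log n)$ expected iterations, with an iterative LP-forced-vertex extraction driven by single-bit flips whose probability is controlled by \cref{lemma::singleFlipAndBoundedPop}. Since small components of $G$ impose no constraints on the $W$-separator LP, $0^n$ trivially satisfies the LP-equality $\lpPrim(G) = |X_1| + \lpPrim(G[u(X)])$.

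For the iterative step, whenever $\P$ contains an LP-tight $X$ such that some $v \in u(X)$ has $y_v = 1$ in some optimal fractional $W$-separator of $G[u(X)]$, the single-bit-flip mutation selecting $v$ produces $X' = X \cup \{v\}$ with $|X'_1| = |X_1| + 1$, $|u(X')| \leq |u(X)| - 1$, and $\lpPrim(G[u(X')]) = \lpPrim(G[u(X)]) - 1$. The last identity follows from the standard LP fact that deleting an LP-forced vertex reduces the optimum by exactly one (restriction of the optimal LP to $V \setminus \{v\}$ gives the $\leq$ direction; extending an optimal LP of $G[u(X)] - v$ by $y_v = 1$ gives $\geq$), together with the observation that newly-revealed components of size at most $W$ do not affect the LP value. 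Hence $X'$ inherits LP-equality and strictly dominates $X$ in the second and third coordinates, so it enters $\P$. By \cref{lemma::singleFlipAndBoundedPop} each such flip occurs in $O(n^3)$ expected iterations, and since $|X_1|$ strictly increases and is bounded by $\lceil \lpPrim(G) \rceil \leq \opt$, at most $\opt$ such steps take place, yielding the $O(n^3(\log n + \opt))$ bound.

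For the persistence phase I would first observe that LP-equality is preserved under weak domination: the universal lower bound $|X_1| + \lpPrim(G[u(X)]) \geq \lpPrim(G)$ is obtained by extending any primal-feasible LP solution of $G[u(X)]$ by $y_v = 1$ on $X_1$ to a primal-feasible solution for $G$; combined with weak domination of an LP-tight $X$, this forces any dominator $Y$ to satisfy $|Y_1| = |X_1|$ and $\lpPrim(G[u(Y)]) = \lpPrim(G[u(X)])$, so $Y$ is also LP-tight. For the ``$y_v < 1$ everywhere'' condition I would argue that the extraction step is always available whenever a forced vertex exists, so restarting the extraction from $Y$ places a nice successor into $\P$ within $O(n^3)$ further expected iterations; since the number of possible extractions is finite (bounded by $\opt$), the population always contains a nice search point from the first attainment onward.

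The main obstacle is the rigorous preservation of the ``no forced vertex'' condition under weak domination: LP-equality transfers cleanly because it is a function of the fitness triple, but two search points with identical fitness may correspond to different sets $u(X)$, so the structural condition on $G[u(X)]$ is not an invariant of the fitness alone. I expect to bridge this gap by carefully tracking which fitness vectors can be occupied by non-nice search points and invoking the LP-tightness together with the first-coordinate strict-increase structure of the extraction to rule out oscillations of the population off the nice level of the Pareto front. Turning this chain of reasoning into a self-contained argument is the most delicate part of the proof.
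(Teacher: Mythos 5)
Your proposal follows essentially the same route as the paper's proof: reach $0^n$ via \cref{lemma::zeroSol}, repeatedly flip a single LP-forced vertex (the paper invokes \cref{lemma::onesInLPSameObjective} for the fact that this preserves the equality $\lpPrim(G)=|X_1|+\lpPrim(G[u(X)])$), use the lower bound of \cref{lemma::lowerBoundLP} to conclude that LP-tight search points have Pareto-optimal fitness vectors, and charge the at most $\opt$ extraction steps at $\O(n^3)$ expected iterations each via \cref{lemma::singleFlipAndBoundedPop} and fitness-based partitions. Two remarks. First, your justification that $X'$ enters $\P$ because it ``strictly dominates $X$ in the second and third coordinates'' is not the right reason: $X'$ is \emph{worse} in the first coordinate, so neither point dominates the other; what you need (and already have available) is that $X'$ is LP-tight, hence by \cref{corollary::paretoOptLP} no point of $\P$ dominates it. Second, the delicate point you flag is real -- the condition that some optimal fractional separator of $G[u(X)]$ has all $y_v<1$ is not determined by the fitness vector, so a nice witness can in principle be displaced by an equal-fitness non-nice one -- but the paper's own proof does not address it either; it establishes (and downstream only needs) the weaker statement that the number of iterations in which $\P$ contains \emph{no} nice point is $\O(n^3(\log n+\opt))$ in expectation, which follows from exactly the argument you sketch: the maximal number of ones over LP-tight points in $\P$ is non-decreasing, bounded by $\lpPrim(G)\le\opt$, and increases with probability $\Omega(1/n^3)$ in every iteration in which the current maximal LP-tight point is not nice. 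So your ``restart the extraction, at most $\opt$ times in total'' idea is the intended resolution; you should simply phrase the conclusion as a bound on the number of bad iterations rather than as literal persistence of a fixed witness.
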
 

\begin{lemma}
\label{lemma::ingredientParateo1}
Let $(A_1,B_1), \dots, (A_m,B_m)$ be a sequence of minimal strictly reducible pairs in $G$, such that $G - \bigcup_{i=1} A_i$ contains no minimal strictly reducible pair.
Let $X \in \{0,1\}^n$ be a sample, such that there is a an optimal fractional $W$-separator $\{y_v \in \R_{\geq 0}\}_{v \in u(X)}$ of $G[u(X)]$ with $y_v<1$ for each $v \in u(X)$.
If $|X_1| + \lpPrim(G[u(X)]) = \lpPrim(G)$, then $A_i \subseteq X_1$ and $B_i \cap X_1 = \varnothing$ for all $i \in [m]$.
\end{lemma}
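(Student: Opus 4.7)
The plan is to lift the sample $X$ together with its fractional witness on $u(X)$ to an optimal fractional $W$-separator on the whole of $G$, and then read off its values on each pair $(A_i, B_i)$ by induction. Concretely, I define $Y^* = \{y^*_v\}_{v \in V}$ by $y^*_v = 1$ for $v \in X_1$, $y^*_v = y_v$ for $v \in u(X)$, and $y^*_v = 0$ otherwise. Every connected $(W+1)$-subgraph $S$ of $G$ either meets $X_1$, or is contained in a connected component of $G - X_1$ of size at least $W+1$ and hence in $u(X)$; either way the LP constraint for $S$ is satisfied. So $Y^*$ is feasible, and $\sum_v y^*_v = |X_1| + \lpPrim(G[u(X)]) = \lpPrim(G)$ shows it is optimal.

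I would then proceed by induction on $i \in [m]$, with hypothesis that $A_j \subseteq X_1$ and $B_j \cap X_1 = \varnothing$ hold for all $j < i$. Setting $G_{i-1} := G - \bigcup_{j < i} A_j$, the hypothesis gives $u(X) \subseteq V(G_{i-1})$, while $(A_i, B_i)$ is a minimal strictly reducible pair in $G_{i-1}$. The packing provided by \cref{lemma::packingReduciblePair} yields $\sum_{v \in A_i \cup B_i} y^*_v \geq |A_i|$. For the reverse inequality, I replace $Y^*|_{A_i \cup B_i}$ by the characteristic vector of $A_i$ and verify feasibility on all of $G$: any connected $(W+1)$-subgraph $S$ contained in $A_i \cup B_i$ must intersect $A_i$ since each $C \in \B_i$ satisfies $|C| \leq W$; any such $S$ not contained in $A_i \cup B_i$ but meeting $B_i$ must, by $N(B_i) \subseteq A_i \cup \bigcup_{j < i} A_j \subseteq A_i \cup X_1$, contain a vertex of $A_i$ or of $X_1$. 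The replacement is therefore feasible, and optimality of $Y^*$ forces $\sum_{v \in A_i \cup B_i} y^*_v \leq |A_i|$, giving equality.

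The main obstacle is upgrading this aggregate identity to the pointwise claims $y^*_a = 1$ for every $a \in A_i$ and $y^*_b = 0$ for every $b \in B_i$. For the distinguished vertex $a^* \in A_i$ from \cref{def::RedPair}, I would use the extra mass $\sum_C g(C, a^*) \geq 2W$ to exhibit an additional connected $(W+1)$-set $P^*$ containing $a^*$ whose $B_i$-part is disjoint from the packing set $P_{a^*}$; summing the LP constraints for $P^*$ and for the packing sets $(P_a)_{a \in A_i}$ double-counts $y^*_{a^*}$ and yields $|A_i| + 1 \leq |A_i| + y^*_{a^*}$, so $y^*_{a^*} = 1$. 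The remaining pointwise statements rely on minimality: if some $a \in A_i$ satisfied $y^*_a < 1$, then transferring the above argument to a packing re-rooted at $a$ (which uses the slack in the $g$-values forced by minimality around $a$) would produce a strictly reducible sub-pair inside $A_i \cup B_i$, contradicting minimality of $(A_i, B_i)$ in $G_{i-1}$. Hence $y^*_a = 1$ throughout $A_i$, and combined with $\sum_{A_i \cup B_i} y^*_v = |A_i|$ this forces $y^*_b = 0$ for every $b \in B_i$. Finally, $y^*_v < 1$ on $u(X)$ gives $A_i \cap u(X) = \varnothing$ and therefore $A_i \subseteq X_1$, while $y^*_b = 0$ excludes $B_i \cap X_1$, closing the induction.
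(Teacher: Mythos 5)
Your proposal is correct in outline but takes a genuinely different route from the paper. The paper never lifts the fractional separator to all of $G$: it first gets $B_i \cap X_1 = \varnothing$ from \cref{lemma::RedPairLPNoSequence}~(ii) (which rests on the packing-deficiency result \cref{lemma::PackingSizeLargeEnough}), and then argues by contradiction for the heads --- if $A_j \setminus X_1 \neq \varnothing$ for a first index $j$, then \cref{lemma::CrownStaysCrown} shows the leftover heads and their crowns form minimal strictly reducible pairs \emph{inside} $G[u(X)]$, so the cited value-one lemma (\cref{lemma::onesInLPReduciblePair}) forces $y_a = 1$ there, contradicting the hypothesis $y_v < 1$ on $u(X)$. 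You instead glue $X_1$ and the witness on $u(X)$ into a global optimal fractional $W$-separator $Y^*$ (this step, feasibility plus \cref{lemma::lowerBoundLP}, is sound) and compute $Y^*$ pointwise on each pair by induction. Your aggregate identity $\sum_{v \in A_i \cup B_i} y^*_v = |A_i|$ is correct: the packing of \cref{lemma::packingReduciblePair} gives the lower bound, and your replacement argument gives the upper bound, with the induction hypothesis correctly handling constraints that exit $B_i$ through earlier heads. This route has the attraction of bypassing \cref{lemma::CrownStaysCrown} and \cref{lemma::PackingSizeLargeEnough} entirely for this lemma.

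Two caveats on your pointwise upgrade, which is the one under-specified step. First, for the double count to give $|A_i| + y^*_{a^*} \geq |A_i| + 1$, the extra set $P^*$ must meet $\bigcup_{a \in A_i} P_a$ only in $a^*$, not merely be disjoint from $P_{a^*}$ as you wrote; the fix is to root the packing construction at $a^*$ so that $|P_{a^*}| \geq 2W+1$ and split $P_{a^*}$ into two connected $(W+1)$-sets sharing only $a^*$. Second, your claim that the argument can be ``re-rooted'' at an arbitrary $a \in A_i$ using minimality is exactly the content of \cref{lemma::decideWhichA}, which is one of the paper's nontrivial new structural results (proved via flow augmentation); it cannot be waved through. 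The cleanest repair is to not re-derive any of this: since $\bigcup_{j<i} A_j \subseteq X_1$ gives $y^*_v = 1$ on those vertices, \cref{lemma::onesInLPSameObjective} makes the restriction of $Y^*$ an optimal fractional $W$-separator of $G - \bigcup_{j<i} A_j$, and \cref{lemma::onesInLPReduciblePair} applied there immediately yields $y^*_a = 1$ on $A_i$ and $y^*_b = 0$ on $B_i$, from which your membership conclusions follow as you state.
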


We guide the rest of this section by using $X \in \{0,1\}^n$ to denote a search point and $(A_1,B_1), \dots, (A_m,B_m)$ as a sequence of minimal strictly reducible pairs,
where $\aall := \bigcup_{i=1}^m A_i$ and $\ball := \bigcup_{i=1}^m B_i$.
For the Algorithm \algGlobalSemoAlt it is unlikely to jump from a uniformly random search point immediately to a search point satisfying \cref{lemma::ingredientParateo1}. 
To guarantee a stepwise progress,
we want that under the condition $\lpPrim(G)=\lpPrim(G[u(X)]) + |X_1|$ at each time $\aall \subsetneq X_1$ there exists a vertex of $v \in \aall \setminus X_1$ in an optimal fractional $W$-separator of $G[u(X)]$ which must have value one.
For this purpose, the characterization of minimal strictly reducible pairs by optimal fractional $W$-separators is useful.

\begin{lemma}[\cite{fomin2019kernelization}, Corollary~6.19 and Lemma~6.20]
\label{lemma::onesInLPReduciblePair}
Let $G=(V,E)$ be an instance of the $W$-separator problem and let $\{y_v \in \R_{\geq 0}\}_{v \in V}$ be an optimal fractional $W$-separator of $G$.
If $G$ contains a minimal strictly reducible pair $(A,B)$, then $y_v=1$ for all $v \in A$ and $y_u = 0$ for all $u \in B$. 	
\end{lemma}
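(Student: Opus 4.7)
The plan is to combine LP duality with the $(W+1)$-packing of Lemma~\ref{lemma::packingReduciblePair} and exploit the extra leverage that strictness and minimality provide in Definition~\ref{def::RedPair}. My approach has three stages: first decompose the LP across $A \cup B$ and its complement, next extract complementary slackness information from a packing-based dual, and finally pin down per-vertex values by varying the packing.

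For the decomposition, I claim $\lpPrim(G) = |A| + \lpPrim(G - (A \cup B))$. For the upper bound, construct $\tilde{y}$ with $\tilde{y}_v = 1$ on $A$, $\tilde{y}_v = 0$ on $B$, and $\tilde{y}_v = y^*_v$ elsewhere, where $y^*$ is optimal for $G - (A \cup B)$. Feasibility uses $N(B) \subseteq A$ together with $|C| \leq W$ for every $C \in \B$: any connected $(W+1)$-subset is either entirely outside $A \cup B$ or must contain an $A$-vertex, because it cannot fit inside a single $B$-component and cannot cross between $B$-components (or to $V \setminus (A \cup B)$) without passing through $A$. For the lower bound, apply Lemma~\ref{lemma::packingReduciblePair} to obtain a $(W+1)$-packing $P_1, \dots, P_{|A|} \subseteq A \cup B$, pick a connected $(W+1)$-subset $S_i \subseteq P_i$ containing the unique $A$-vertex $a_i \in P_i$, and sum the LP constraints over the disjoint $S_i$. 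Now define a dual feasible $z$ with $z_{S_i} = 1$ and $z$ extended by an optimal dual of $G - (A \cup B)$ on constraints living in $V \setminus (A \cup B)$; disjointness of the $S_i$ makes $z$ feasible, and the decomposition makes $z$ dual optimal. Applied to any optimal primal $y$, complementary slackness forces $\sum_{v \in S_i} y_v = 1$ for every $i$ and $y_v = 0$ for every $v \in (A \cup B) \setminus \bigcup_i S_i$.

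The final step is to pin values inside $\bigcup_i S_i$. The choice of each $S_i \subseteq P_i$ was arbitrary, so re-running the CS argument with alternative connected $(W+1)$-subsets $S_i' \subseteq P_i$ through $a_i$ and intersecting the resulting zero-sets forces $y_v = 0$ on every $v \in P_i \cap B$ that can be excluded from some such $S_i'$, which combined with the tight equality $\sum_{v \in S_i'} y_v = 1$ also pins $y_{a_i} = 1$. The main obstacle is the excludability clause: a cut vertex of $G[P_i]$ cannot be omitted while preserving both connectivity and size $W+1$. This is exactly where strictness and minimality pay off. Strictness provides some $a^* \in A$ with $\sum_{C \in \B} g(C, a^*) \geq 2W$, which in the construction of the packing yields an extra slot in $P_{a^*}$ and enough flexibility to exclude any targeted $B$-vertex, giving $y_{a^*} = 1$ and $y_v = 0$ on $P_{a^*} \cap B$. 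Finally, minimality of $(A, B)$ lets me propagate: if some other $a \in A$ still had $y_a < 1$, then paring $(A, B)$ down by removing $a^*$ together with a suitable portion of $B$ severing $a^*$'s contribution to $g$ yields a strict reducible pair strictly contained in $(A, B)$, contradicting minimality. Iterating completes the proof.
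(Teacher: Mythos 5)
The paper does not actually prove this statement—it is imported verbatim from \cite{fomin2019kernelization} (Corollary~6.19 and Lemma~6.20)—so there is no in-paper proof to compare against, and your attempt has to stand on its own. Its first stage does: the sandwich $\lpPrim(G) = |A| + \lpPrim(G-(A\cup B))$ is correct (feasibility of your extended solution uses $N(B)\subseteq A$ and $|C|\le W$ exactly as you say, and the lower bound follows by summing the LP constraints over the disjoint $S_i\subseteq P_i$ and over $V\setminus(A\cup B)$; you do not even need an explicit dual). Tightness then forces $\sum_{v\in S_i}y_v=1$ for each $i$ and $y_v=0$ on $(A\cup B)\setminus\bigcup_i S_i$, for any optimal $y$ and any valid choice of packing and subsets.

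The gap is in the final stage, in two places. First, the ``vary the packing'' step only gives $y_v=0$ for those $v\in S_i\cap B$ that can be excluded from \emph{some} valid certificate, and you never establish that every such $v$ is excludable. Within a fixed packing it generally is not (your own cut-vertex caveat); what is really needed is the freedom to re-route the assignment function $g$ itself so that $v$'s component is assigned elsewhere while every vertex of $A$ still receives $2W-1$ and the targeted $a_i$ receives a surplus. That re-routing property is exactly \cref{lemma::decideWhichA} (for a \emph{minimal} strictly reducible pair, \emph{every} $a^*\in A$ can be made the vertex receiving $2W$), which the paper has to prove by a nontrivial augmenting-path argument on a flow network built from $g$ (\cref{lemma::flowEverywhere}); it does not follow from the bare definition plus strictness at a single unspecified $a^*$. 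Second, the minimality step is not a valid deduction. The smaller pair you propose—delete $a^*$ and ``a suitable portion of $B$''—is constructed without using the hypothesis $y_a<1$ anywhere, so if it worked it would contradict the minimality of every minimal pair with $|A|\ge 2$ unconditionally; and the construction itself is unverified: after removing $a^*$ you must still guarantee $N(B')\subseteq A\setminus\{a^*\}$ in $G$, at least $2W-1$ units of assignment for every remaining head vertex, and a surplus of $2W$ at one of them, none of which is automatic. To close the argument along your lines you would essentially have to prove \cref{lemma::decideWhichA} first and then run your exclusion argument at every $a\in A$ rather than only at $a^*$.
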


From \cref{lemma::onesInLPReduciblePair} we can derive that if $(\aall \cup \ball) \cap X_1 = \varnothing$, such a vertex $v$ must exist, but the question is what happens if the intersection is not empty.
In particular, we want to avoid vertices of $\ball$ being in $X_1$, since reducible pairs in $G$ may then no longer exist in $G[u(X)]$.
We start with the proof of \cref{lemma::firstStepOptLP} and show later how it is related to a sequence of minimal strictly reducible pairs.
The first lemma is a simple but useful observation.

\begin{lemma}
\label{lemma::lowerBoundLP}
For every $X \in \{0,1\}^n$ it holds that $\lpPrim(G) \leq |X_1| + \lp(G[u(X)])$.
\end{lemma}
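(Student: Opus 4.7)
The plan is to construct an explicit feasible fractional solution for the $W$-separator LP on $G$ whose objective value equals $|X_1| + \lpPrim(G[u(X)])$. Let $\{y_v\}_{v \in u(X)}$ be an optimal fractional $W$-separator of $G[u(X)]$, and define a candidate assignment $\{y'_v\}_{v \in V}$ by
\[
y'_v \;=\; \begin{cases} 1 & v \in X_1, \\ y_v & v \in u(X), \\ 0 & v \in V \setminus (X_1 \cup u(X)). \end{cases}
\]
Its objective value is exactly $|X_1| + \lpPrim(G[u(X)])$, so it suffices to show that $y'$ is feasible for the $W$-separator LP on $G$; then $\lpPrim(G) \leq \sum_v y'_v$ gives the claim.

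To verify feasibility, take any $S \subseteq V$ with $|S| = W+1$ such that $G[S]$ is connected. I would split into two cases. If $S \cap X_1 \neq \varnothing$, then some $v \in S$ has $y'_v = 1$, so $\sum_{v \in S} y'_v \geq 1$ and the constraint is satisfied. Otherwise, $S$ is contained in the vertex set of $G - X_1$ and $G[S]$ is connected, so $S$ lies inside a single connected component of $G - X_1$; since $|S| = W+1$, that component has size at least $W+1$, hence $S \subseteq u(X)$ by definition of $u(X)$. Then $G[S]$ is also a connected subgraph of $G[u(X)]$ of size $W+1$, and the feasibility of $\{y_v\}_{v \in u(X)}$ for the LP on $G[u(X)]$ yields $\sum_{v \in S} y'_v = \sum_{v \in S} y_v \geq 1$.

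Neither step is technically demanding; the only point requiring any care is the second case, where one must observe that a connected subgraph of size $W+1$ inside $G - X_1$ cannot lie in a component of size $\leq W$, and therefore must be entirely contained in $u(X)$. Once this is established, the LP constraint inherited from $G[u(X)]$ applies directly and the overall feasibility follows. Combining with the fact that $y'$ has objective value $|X_1| + \lpPrim(G[u(X)])$, we conclude $\lpPrim(G) \leq |X_1| + \lpPrim(G[u(X)])$, as required.
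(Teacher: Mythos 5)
Your proposal is correct and follows the same approach as the paper: extending the optimal fractional $W$-separator of $G[u(X)]$ to a feasible fractional $W$-separator of $G$ by assigning value $1$ to the vertices of $X_1$. You simply spell out the feasibility check (including the observation that any connected $(W+1)$-vertex subgraph avoiding $X_1$ must lie entirely in $u(X)$) that the paper leaves implicit.
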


If \cref{lemma::lowerBoundLP} is true, it is not difficult to derive that if we have that it holds with equality for a search point $X$, then $f_2(X)$ is a pareto optimal vector of the fitness function $f_2$, as given below as a corollary.

\begin{corollary}
\label{corollary::paretoOptLP}
If a search point $X \in \{0,1\}^n$ satisfy $|X_1| + \lp(G[u(X)]) = \lpPrim(G)$, then the vector $(|X_1|,*,\lp(G[u(X)])$ is a pareto optimal vector of the fitness function $f_2$.
\end{corollary}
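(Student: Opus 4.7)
The plan is a short proof by contradiction that uses \cref{lemma::lowerBoundLP} as the only real ingredient. The entire point of the corollary is that the hypothesis $|X_1| + \lp(G[u(X)]) = \lpPrim(G)$ makes the lower bound of \cref{lemma::lowerBoundLP} tight for $X$, so no other search point can simultaneously undercut both the one-objective and the LP-objective.

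First I would unpack what is being claimed. Interpreting the $*$ notation of the paper, the statement is that the target vector $(|X_1|, *, \lp(G[u(X)]))$ sits on the Pareto front of the image of $f_2$ independently of the value assigned to its second component: there is no $Y \in \{0,1\}^n$ whose image $f_2(Y) = (|Y_1|, |u(Y)|, \lp(G[u(Y)]))$ satisfies $|Y_1| \le |X_1|$ and $\lp(G[u(Y)]) \le \lp(G[u(X)])$ with at least one of these inequalities strict (the middle coordinate being arbitrary, the second condition $|u(Y)| \le *$ is treated as vacuous, which is the hardest case in favor of domination).

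Next I would suppose for contradiction that such a $Y$ exists. Summing the first and third coordinate inequalities yields
\[
|Y_1| + \lp(G[u(Y)]) < |X_1| + \lp(G[u(X)]).
\]
By the hypothesis of the corollary, the right-hand side equals $\lpPrim(G)$, so $|Y_1| + \lp(G[u(Y)]) < \lpPrim(G)$. But \cref{lemma::lowerBoundLP}, applied to $Y$, gives the opposite inequality $\lpPrim(G) \le |Y_1| + \lp(G[u(Y)])$, which is the desired contradiction.

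There is essentially no obstacle here beyond parsing the $*$ notation correctly: once one sees that dominating the target vector forces both the first and third coordinates of $f_2(Y)$ to be no larger than those of $f_2(X)$ with at least one strict, \cref{lemma::lowerBoundLP} closes the argument immediately. The only thing worth noting is that the symmetric case where $|Y_1| = |X_1|$ and $\lp(G[u(Y)]) = \lp(G[u(X)])$ (equality in the first and third coordinates, so that any strict improvement would have to come from the second coordinate) cannot create a dominating vector either, because with $*$ left free we may take it to be $|u(Y)|$; hence strictness must arise in coordinate one or three, which is precisely the case ruled out above.
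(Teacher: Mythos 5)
Your proof is correct and follows exactly the route the paper intends: the paper does not spell out a proof of this corollary but states that it follows readily from \cref{lemma::lowerBoundLP}, and your contradiction argument (summing the first and third coordinates of a hypothetical dominating vector to violate the lower bound) is precisely that derivation. Your remark on the $*$ coordinate also matches how the corollary is actually used later (only coordinates one and three matter), so nothing is missing.
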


The next lemma ensures that removing vertices with value one in an optimal fractional $W$-separator does not affect the objective of a fractional $W$-separator of the remaining graph.

\begin{lemma}[\cite{fomin2019kernelization}, Corollary 6.17]
\label{lemma::onesInLPSameObjective}
Let $G=(V,E)$ be an instance of the $W$-separator problem and let $\{y_v \in \R_{\geq 0}\}_{v \in V}$ be an optimal fractional $W$-separator of $G$.
Let $V' \subseteq V(G)$, such that $y_v = 1$ for all $v \in V'$.
Then, $\{y_v \mid v \in V \setminus V'\}$ is an optimal fractional $W$-separator of $G-V'$, i.e., ~$\sum_{v \in V \setminus V'} y_v = \lpPrim(G-V')$.
\end{lemma}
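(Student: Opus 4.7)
\medskip
\noindent\textbf{Proof plan.} The plan is to show the two inclusions $\sum_{v \in V \setminus V'} y_v \ge \lpPrim(G-V')$ and $\sum_{v \in V \setminus V'} y_v \le \lpPrim(G-V')$ separately. The first direction is just feasibility of the restriction, while the second uses an exchange argument: any hypothetical fractional separator of $G-V'$ that is strictly cheaper than the restriction would contradict optimality of $\{y_v\}_{v \in V}$ once we lift it back to $G$ by putting $1$ on every vertex of $V'$.

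\medskip
\noindent\textbf{Step 1 (Feasibility of the restriction).} First I would verify that $\{y_v\}_{v \in V \setminus V'}$ is feasible for the $W$-separator LP on $G-V'$. Every constraint of $G-V'$ is indexed by a vertex set $S \subseteq V \setminus V'$ of size $W+1$ with $(G-V')[S]$ connected. Since $S \cap V' = \varnothing$, we have $(G-V')[S] = G[S]$, so $S$ also indexes a constraint in the LP on $G$. By assumption $\sum_{v \in S} y_v \ge 1$, and since all terms in this sum lie in $V \setminus V'$, the restricted assignment already satisfies the constraint. Non-negativity is inherited. This shows $\sum_{v \in V \setminus V'} y_v \ge \lpPrim(G-V')$.

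\medskip
\noindent\textbf{Step 2 (Optimality via lifting).} For the reverse direction, suppose towards contradiction that there is a fractional $W$-separator $\{z_v\}_{v \in V \setminus V'}$ of $G-V'$ with $\sum_{v \in V \setminus V'} z_v < \sum_{v \in V \setminus V'} y_v$. I would extend $z$ to an assignment $\tilde z$ on all of $V$ by setting $\tilde z_v := 1$ for $v \in V'$ and $\tilde z_v := z_v$ otherwise. To check feasibility of $\tilde z$ on $G$, take any connected $(W+1)$-subgraph $S \subseteq V$. If $S \cap V' \ne \varnothing$, then $\sum_{v \in S} \tilde z_v \ge 1$ is immediate from the $1$-values placed on $V'$. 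Otherwise $S \subseteq V \setminus V'$ is connected in $G-V'$ as well, so $\sum_{v \in S} \tilde z_v = \sum_{v \in S} z_v \ge 1$ by feasibility of $z$ on $G-V'$. Hence $\tilde z$ is feasible. But then
\begin{equation*}
\sum_{v \in V} \tilde z_v \;=\; |V'| + \sum_{v \in V \setminus V'} z_v \;<\; |V'| + \sum_{v \in V \setminus V'} y_v \;=\; \sum_{v \in V} y_v \;=\; \lpPrim(G),
\end{equation*}
where the last equality in the middle uses the hypothesis $y_v = 1$ for $v \in V'$. This contradicts the optimality of $\{y_v\}_{v \in V}$ for the LP on $G$, and therefore $\sum_{v \in V \setminus V'} y_v \le \lpPrim(G-V')$.

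\medskip
\noindent\textbf{Potential obstacle.} The whole argument hinges on the observation that the constraint family of the LP on $G-V'$ is a \emph{sub-family} of that on $G$ (so restrictions stay feasible), while the constraints of $G$ touching $V'$ are automatically killed by the $1$-coordinates on $V'$ (so liftings stay feasible). There is no real analytic difficulty; the only thing I would be slightly careful about is checking that ``connected in $G-V'$'' coincides with ``connected in $G$'' for subsets $S$ disjoint from $V'$, which is immediate because $(G-V')[S] = G[S]$ for any $S \subseteq V \setminus V'$. The hypothesis $y_v = 1$ on $V'$ is used only to make the cost accounting in Step~2 exact; without it one would only get an inequality in the wrong direction.
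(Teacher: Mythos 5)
Your proof is correct. Note that the paper does not prove this lemma at all: it is imported verbatim from Fomin et al.\ (cited there as Corollary~6.17 of the kernelization book), so there is no in-paper argument to compare against. Your restriction/lifting argument is the standard and essentially the only natural one: the constraints of the LP on $G-V'$ form exactly the sub-family of constraints of the LP on $G$ indexed by sets disjoint from $V'$ (since $(G-V')[S]=G[S]$ for such $S$), which gives feasibility of the restriction, and any constraint of $G$ meeting $V'$ is killed by the $1$-coordinates, which makes the lifting feasible; the hypothesis $y_v=1$ on $V'$ then makes the cost bookkeeping exact in both directions. This is a complete and correct proof of the cited statement.
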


\cref{corollary::paretoOptLP,lemma::onesInLPSameObjective} allow incremental progress in the set of 1-bits with respect to search points $X \in \P$ that satisfy $|X_1| + \lp(G[u(X)]) = \lpPrim(G)$ without backstepping.
With this ingredient we can prove \cref{lemma::firstStepOptLP} (see~Section~\ref{appendix::f2} for a proof).
Since $f_3$ has one less objective than $f_2$, one can derive the following lemma.
\begin{lemma}
\label{corollary::firstStepOptLP}
Using the fitness function $f_3$, the expected number of iterations of \algGlobalSemoAlt where the population $\P$ contains no search point $X \in \{0,1\}^n$ such that $\lpPrim(G)=\lpPrim(G[u(X)]) + |X_1|$ and there is an optimal fractional $W$-separator $\{y_v \in \R_{\geq 0}\}_{v \in u(X)}$ of $G[u(X)]$ with $y_v<1$ for every $v \in u(X)$ is upper bounded by $\O(n^2(\log n + \opt))$.
\end{lemma}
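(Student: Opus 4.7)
The plan is to mimic the proof of \cref{lemma::firstStepOptLP}, substituting the tighter population-size estimates that the corollary lemmas provide for $f_3$. First, I would invoke \cref{corollary::zeroSol} to argue that within expected $\O(n^2 \log n)$ iterations the search point $0^n$ enters the population. For $X = 0^n$, components of $G$ of size at most $W$ contribute $0$ to both sides of the target equality, while components of size at least $W+1$ form exactly $G[u(0^n)]$; hence $\lpPrim(G) = |0^n_1| + \lpPrim(G[u(0^n)])$ holds trivially, so the population already contains a witness of the equality (though possibly with LP-variables equal to $1$).

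Second, I would describe the incremental improvement step that drives us from "equality holds" to "equality holds and the optimal fractional separator has all values strictly below $1$". Suppose $X \in \P$ satisfies $\lpPrim(G) = |X_1| + \lpPrim(G[u(X)])$ but some optimal fractional $W$-separator $\{y_v\}_{v\in u(X)}$ of $G[u(X)]$ has $y_v=1$ for some vertex $v$. Flipping only the bit $x_v$ yields a search point $X'$ with $|X'_1| = |X_1|+1$, and by \cref{lemma::onesInLPSameObjective} we have $\lpPrim(G[u(X')]) = \lpPrim(G[u(X)]) - 1$, so $X'$ again witnesses equality. By \cref{corollary::singleFlipAndBoundedPop}, selecting $X$ and flipping exactly the bit $x_v$ has probability $\Omega(1/n^2)$, so the expected time for one improvement step is $\O(n^2)$.

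Third, I would bound the number of such improvement steps. Because $\lpPrim(G) \leq \opt$ and the equality forces $|X_1| \leq \lpPrim(G) \leq \opt$ along the pareto path, at most $\opt$ one-bit flips are required before the witness has strictly fractional LP-values everywhere. Summing gives $\O(\opt \cdot n^2)$ for the improvement phase, and combined with $\O(n^2 \log n)$ from the first phase we obtain the stated $\O(n^2(\log n + \opt))$ bound.

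The main obstacle is showing that once a witness search point $X$ enters the population it is never lost, since $f_3$ has only two objectives instead of three and therefore admits a coarser pareto front than $f_2$. For this I would use \cref{lemma::lowerBoundLP}: any search point $Z$ that weakly dominates $X$ with respect to $(|Z_1|,\lpPrim(G[u(Z)]))$ must satisfy $|Z_1| + \lpPrim(G[u(Z)]) \leq |X_1| + \lpPrim(G[u(X)]) = \lpPrim(G)$, which combined with \cref{lemma::lowerBoundLP} forces equality, so $Z$ is itself a valid witness. Hence, throughout the run the "equality" property is preserved along any chain of pareto replacements, and the analogue of \cref{corollary::paretoOptLP} for $f_3$ ensures the corresponding $(|X_1|,\lpPrim(G[u(X)]))$-vector remains pareto optimal, completing the argument.
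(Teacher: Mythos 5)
Your proposal is correct and follows essentially the same route as the paper, which derives \cref{corollary::firstStepOptLP} by repeating the proof of \cref{lemma::firstStepOptLP} (reach $0^n$, then perform at most $\opt$ single-bit flips of vertices with $y_v=1$ guided by \cref{lemma::onesInLPSameObjective} and \cref{corollary::paretoOptLP}) with the improved population bounds of \cref{corollary::singleFlipAndBoundedPop,corollary::zeroSol}. Your explicit check that the equality property survives pareto replacements under the coarser two-objective front of $f_3$ is a welcome refinement of a point the paper leaves implicit.
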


Our next goal is to prove \cref{lemma::ingredientParateo1}.
To identify the head vertices $\aall$ with respect to an optimal fractional $W$-separator, we want to ensure that the selection of the vertices of $\ball$ are distinguishable so that it cannot come to a conflict with \cref{lemma::firstStepOptLP}.
To do this, we will make use of the LP-objective and show that for a search point $X$ with $X_1 \cap \ball \neq \varnothing$ we have $\lpPrim(G) < \lpPrim(G[u(X)]) + |X_1|$.
Let $(A,B)$ be a minimal strictly reducible pair in $G$.
The essential idea is to use $(W+1)$-packings in $G[A \cup B]$, since they provide lower bounds for $W$-separators.
From \cref{lemma::packingReduciblePair} one can deduce that $G[A \cup B]$ contains a maximum ($W+1$)-packing $\Q$ of size $|A|$, since every vertex of $A$ is contained exactly in one element of $\Q$.
Inspired by ideas on how to find crown decompositions in weighted bipartite graphs from~\cite{DBLP:conf/esa/Casel0INZ21,DBLP:conf/iwpec/KumarL16}, we prove that removing vertices from $B$ only partially affects the size of the $(W+1)$-packing in $G[A \cup B]$, as stated in the following lemma.

\begin{lemma}
\label{lemma::PackingSizeLargeEnough}
Let $(A,B)$ be a minimal strictly reducible pair in $G=(V,E)$ and let $S \subset A \cup B$ with $|S| \leq |A|$.
If $S \cap B \neq \varnothing$, then $G[(A \cup B)] - S$ contains a packing of size $|A| - |S| + 1$.
\end{lemma}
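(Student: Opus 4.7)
The plan is to begin from the $(W+1)$-packing $P_1,\dots,P_{|A|}$ of $G[A\cup B]$ guaranteed by \cref{lemma::packingReduciblePair}, with $P_i\cap A=\{a_i\}$, and count how many packing elements survive once the vertices of $S$ are deleted. Write $S_A := S\cap A$ and $S_B := S\cap B$, so $|S_B|\geq 1$ by hypothesis and $|S_A|+|S_B|\leq |A|$. Let $T_A := \{i : a_i\in S_A\}$ and $T_B := \{i : P_i\cap S_B\neq\varnothing\}$. The packings indexed by $[|A|]\setminus(T_A\cup T_B)$ remain valid $(W+1)$-packing elements in $G[A\cup B]-S$, giving $|A|-|T_A\cup T_B|$ surviving packing elements.

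I would then split into two cases. In the easy case $T_A\cap T_B\neq\varnothing$ (or more generally $|T_A|+|T_B|<|S|$, e.g.\ when two vertices of $S_B$ fall inside the same $P_i$), the inclusion–exclusion already gives $|T_A\cup T_B|\leq |S|-1$ and hence at least $|A|-|S|+1$ surviving packings. The harder case is $T_A\cap T_B=\varnothing$ with $|T_B|=|S_B|$, where the naive count gives only $|A|-|S|$ packings and one extra packing element must be constructed. Here I would exploit the strict part of strict reducibility: fix the assignment function $g$ and the vertex $a^*\in A$ with $\sum_{C\in\B} g(C,a^*)\geq 2W$. Observe that for every $i\in T_A$ the whole $B$-side $P_i\cap B$ of size $W$ is preserved in $G-S$, so the ``extra unit'' at $a^*$ together with these preserved $B$-sides supplies enough connected material to build one additional $(W+1)$-packing element in $G[A\cup B]-S$, vertex-disjoint from the already-surviving ones.

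A clean way to carry out this construction is to apply \cref{lemma::packingReduciblePair} to a suitably restricted reducible pair $(A'',B'')$ with $A''\subseteq A\setminus S_A$ of size $|A|-|S|+1$ and $B''\subseteq B\setminus S_B$, equipped with an assignment function $g''$ obtained from $g$ by decrementing a total of $|S_B|$ units to restore $\sum_{a}g''(C,a)\leq |C|$ after deleting $S_B$ and by restricting to $A''$. The required bookkeeping is that these $|S_B|$ decrements can be absorbed: $|S_B|-1$ of them by the $|S_B|-1$ vertices we drop from $A\setminus S_A$ to obtain $A''$, and the remaining single decrement by the strict-reducibility slack of one unit at $a^*$. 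The resulting $g''$ satisfies $\sum_{C}g''(C,a)\geq 2W-1$ for every $a\in A''$, and \cref{lemma::packingReduciblePair} then yields a $(W+1)$-packing of size $|A''|=|A|-|S|+1$ in $G[(A''\cup B'')]\subseteq G[A\cup B]-S$.

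The main obstacle is precisely this bookkeeping step: one has to argue, using minimality of $(A,B)$ together with the combinatorial constraints relating $g$-values and component sizes, that the $|S_B|-1$ dropped $A$-vertices can indeed be chosen so as to absorb the full weight-loss induced by $S_B$ while every remaining $a\in A''$ still meets its $2W-1$ demand. This is where the ``$+1$'' in the strictly reducible condition is essential—without it, absorbing $|S_B|$ units of loss would require dropping $|S_B|$ vertices from $A\setminus S_A$, yielding only $|A|-|S|$ packing elements.
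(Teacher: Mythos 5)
Your reduction to the hard case (every vertex of $S$ lying in its own packing element, so that the naive count yields only $|A|-|S|$ survivors) matches the paper's first step, and the instinct that the extra element must come from the strict slack combined with minimality is the right one. However, the construction of that extra element has a genuine gap, in two places. First, the accounting ``decrement a total of $|S_B|$ units'' undercounts the damage: deleting a single $s\in S_B$ from its component $C\in\B$ can split $C$ into pieces none of which is adjacent to an $a$ with $g(C,a)>0$ (for instance, $C$ a path that $a$ meets only at the deleted endpoint), so one deletion can destroy up to $g(C,a)\le |C|\le W$ units of assignable weight at a single vertex of $A$, not one unit. Because each packing element must be connected through its $A$-vertex, the adjacency requirement $g''(C',a)\ne 0\Rightarrow a\in N(C')$ for the \emph{new} components $C'$ of $G[B'']$ cannot be dropped, and your $g''$ with the stated demands need not exist. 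Second, even where only one unit is lost, the strict-reducibility slack sits at one particular $a^*$ fixed by the given assignment, whereas you need it at the specific $a'$ that is $g$-adjacent to the damaged component $\B(s')$. Showing that the slack can be relocated to an \emph{arbitrarily prescribed} vertex of $A$ is exactly where minimality enters, and it is not a bookkeeping matter: the paper proves it (\cref{lemma::decideWhichA}, via the auxiliary networks $H_a$ and \cref{lemma::flowEverywhere}) by arguing that if some $a$ cannot receive the extra unit along an augmenting path, then the vertices that can, together with their assigned components, form a strictly reducible pair properly contained in $(A,B)$, contradicting minimality. Your proposal names this as ``the main obstacle'' but does not supply the argument, so the claim that every remaining $a\in A''$ still meets its $2W-1$ demand is unsupported. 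The paper also sidesteps rebuilding a reducible pair: it re-runs the tree-based packing process of \cref{lemma::packingReduciblePair} with the root placed at $a'$ and uses the relocated slack to show that the root's bundle survives the loss of $\B(s')$, which extracts the one extra element more directly.
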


In contrast, note that removing vertices $S \subseteq A$ from $G[A \cup B]$ would decrease the size of a $(W+1)$-packing by $|S|$,
i.e., a maximum $(W+1)$-packing in $G[A \cup B]-S$ has size $|A|-|S|$.
We moved the proof of \cref{lemma::PackingSizeLargeEnough} to the appendix (Section~\ref{appendix::f2}), since it is more technical and too long given the space constraints.
Essentially, we make use of the following two lemmas and properties of network flows.
In particular, these lemmas describe the new properties we have found for minimal strictly reducible pairs and may be of independent interest.

\begin{lemma}
	\label{lemma::decideWhichA}
	Let $(A,B)$ be a minimal strictly reducible pair in $G$ with parameter $W$.
	Then, for every $a^* \in A$ there is an assignment function $g \colon \B \times A \to \N_0$ like in \cref{def::RedPair} that satisfies $\sum_{C \in \B} g(C,a^*) \geq 2W$ and $\sum_{C \in \B} g(C,a) \geq 2W -1$ for every $a \in A \setminus \{a^*\}$.
\end{lemma}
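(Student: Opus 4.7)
The plan is to characterize the existence of the assignment function via a Hall-type flow feasibility condition, and then to derive a contradiction with the minimality of $(A,B)$ under the assumption that the statement fails for some $a^* \in A$.

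First, I would interpret any candidate $g$ as a transportation problem between $A$, with lower demands $L_a$ on the row-sums $\sum_C g(C,a)$, and $\B$, with upper capacities $|C|$ on the column-sums $\sum_a g(C,a)$, where $(a,C)$ is an allowed pair iff $a \in N(C)$. Writing $\B(A') := \{C \in \B : N(C) \cap A' \neq \varnothing\}$, a standard max-flow / min-cut argument shows that a feasible $g$ exists iff
\[
\sum_{C \in \B(A')} |C| \;\geq\; \sum_{a \in A'} L_a \quad \text{for every } A' \subseteq A.
\]
A reducible pair corresponds to $L_a \equiv 2W-1$; the claim for a given $a^*$ asks for feasibility when $L_{a^*} = 2W$ and $L_a = 2W-1$ otherwise.

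Next, suppose for contradiction that this feasibility fails for some fixed $a^* \in A$. Then a violating $A' \subseteq A$ exists. Since $(A,B)$ is a reducible pair we have $\sum_{C \in \B(A')}|C| \geq (2W-1)|A'|$, so the violation forces $a^* \in A'$ and
\[
\sum_{C \in \B(A')} |C| \;=\; (2W-1)|A'|.
\]
On the other hand, strictness of $(A,B)$ supplies some $\hat a \in A$ and assignment $\hat g$ with $\sum_C \hat g(C,\hat a) \geq 2W$; applied to the set $A'$ together with the tight equality above, this forces $\hat a \notin A'$.

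Now I would set $A'' := A \setminus A'$ (containing $\hat a$, hence nonempty, and strictly smaller than $A$ since $a^* \in A'$) and $B'' := B \setminus \bigcup_{C \in \B(A')} C$, i.e.\ remove whole components. Then $N(B'') \subseteq A''$ and the components of $G[B'']$ all have size at most $W$. The crucial algebraic step is the disjoint decomposition $\B(A' \cup A''') = \B(A') \,\sqcup\, \B''(A''')$ for every $A''' \subseteq A''$, where $\B''$ denotes the components of $G[B'']$. Applying strict feasibility of $(A,B)$ (with witness $\hat a$) to $A' \cup A'''$ and subtracting the tight equality on $A'$ yields
\[
\sum_{C \in \B''(A''')} |C| \;\geq\; (2W-1)|A'''| + \1[\hat a \in A'''],
\]
so $(A'', B'')$ is a strictly reducible pair with $A'' \subsetneq A$ and $B'' \subseteq B$, contradicting the minimality of $(A,B)$. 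The main obstacle is establishing the Hall-type flow characterization cleanly enough that the residual pair inherits strict feasibility; the linchpin is the disjoint decomposition of $\B(A' \cup A''')$, which is available only because we delete entire components of $\B$ from $B$.
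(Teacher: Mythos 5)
Your proof is correct, and it reaches the conclusion by the cut side of max-flow/min-cut rather than the flow side used in the paper. The paper proves this statement via \cref{lemma::flowEverywhere}: it embeds a witnessing assignment $g$ into an explicit flow network $H$, raises the capacity of the arc $\overrightarrow{a^*t}$ from $2W-1$ to $2W$, and shows by an augmenting-path/rerouting argument that the extra unit of flow is always realizable --- the set $A_1$ of ``augmentable'' vertices is shown to equal $A$, since otherwise $(A_1, \cdot)$ would be a smaller strictly reducible pair. You instead invoke the Gale--Hoffman (defect Hall) feasibility condition $\sum_{C \in \B(A')} |C| \geq \sum_{a \in A'} L_a$ for all $A' \subseteq A$, extract a tight violating set $A'$ (forced to contain $a^*$ but not the strictness witness $\hat a$), and show via the disjoint decomposition $\B(A' \cup A''') = \B(A') \sqcup \B''(A''')$ that the complement $(A'', B'')$ inherits strict feasibility, contradicting minimality; note that $A'$ is essentially the $A$-side of the min cut certifying the paper's ``non-augmentable'' case, so the two arguments are dual formulations of the same fact. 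Your version is cleaner and avoids the residual-network bookkeeping (the paper's definition of the component set $\B'$ and the rerouting step are the delicate parts of its proof); what the paper's explicit construction buys is reuse, since the same $g$-embedded networks $H_a$ and their augmenting paths are needed again in the proof of \cref{lemma::PackingSizeLargeEnough}. The one point you flag yourself --- establishing the Hall-type characterization --- is indeed the only external ingredient, but it is the standard integral supply--demand theorem for bipartite transportation problems and follows from max-flow/min-cut exactly as you describe, so there is no gap.
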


Concerning \cref{lemma::decideWhichA}, we remark that the new feature to before is that the particular vertex (in the lemma $a^*$) can be chosen arbitrarily.

\begin{lemma}
	\label{lemma::SizeNeoghborhood}
	Let $(A,B)$ be a minimal strictly reducible pair in $G$ with parameter $W$.
	Then, for every $A' \subseteq A$ we have $|V(\B_{A'})| \geq |A'|(2W-1)+1$.
\end{lemma}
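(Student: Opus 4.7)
The plan is a short counting argument that extracts the bound by summing the per-vertex weight guarantees of the assignment function provided by \cref{lemma::decideWhichA} over the chosen subset $A'$. The key flexibility we exploit is that \cref{lemma::decideWhichA} lets us put the distinguished vertex (the one receiving total weight at least $2W$) anywhere we like inside $A$, and in particular inside $A'$.

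First I would dispose of the trivial case $A' = \varnothing$ (for which the statement is vacuous or excluded), and then fix an arbitrary $a^* \in A'$. Applying \cref{lemma::decideWhichA} with this choice, I obtain an assignment function $g \colon \B \times A \to \N_0$ meeting all the requirements of \cref{def::RedPair} and in addition satisfying $\sum_{C \in \B} g(C,a^*) \geq 2W$ and $\sum_{C \in \B} g(C,a) \geq 2W-1$ for every $a \in A \setminus \{a^*\}$. Summing these lower bounds over $a \in A'$ yields
\[
\sum_{a \in A'} \sum_{C \in \B} g(C,a) \;\geq\; 2W + (|A'|-1)(2W-1) \;=\; |A'|(2W-1) + 1.
\]

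Next I would use the first bullet of \cref{def::RedPair}, namely that $g(C,a) \neq 0$ forces $a \in N(C)$. For $a \in A'$, this means only components $C \in \B$ with $a \in N(C) \cap A'$ can contribute, so all contributing components lie in $\B_{A'}$. Swapping the order of summation and applying the third bullet $\sum_{a \in A} g(C,a) \leq |C|$ componentwise gives
\[
|A'|(2W-1) + 1 \;\leq\; \sum_{C \in \B_{A'}} \sum_{a \in A'} g(C,a) \;\leq\; \sum_{C \in \B_{A'}} \sum_{a \in A} g(C,a) \;\leq\; \sum_{C \in \B_{A'}} |C| \;=\; |V(\B_{A'})|,
\]
which is the claimed inequality.

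There is no real obstacle once \cref{lemma::decideWhichA} is available: the argument is a clean double-counting of the weights $g(C,a)$ against row-sums on one side and column-sums on the other. The single delicate point is that the ``$+1$'' in the conclusion comes exactly from the strict inequality $2W$ rather than $2W-1$ at the distinguished vertex; without the freedom granted by \cref{lemma::decideWhichA} to place that vertex inside $A'$, the bound would degrade to $|A'|(2W-1)$ whenever the forced distinguished vertex happened to lie in $A \setminus A'$, which would be insufficient for the downstream use in \cref{lemma::PackingSizeLargeEnough}.
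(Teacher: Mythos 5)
Your proof is correct and follows essentially the same route as the paper, which states that \cref{lemma::SizeNeoghborhood} is derived from \cref{lemma::flowEverywhere} (equivalently, from \cref{lemma::decideWhichA}) without spelling out the counting; your explicit double-counting of $g$ against the row bounds $2W-1$ (with $2W$ at a distinguished $a^*\in A'$) and the column bounds $\sum_{a\in A}g(C,a)\le|C|$ is exactly that derivation made precise. One caution: your containment of the contributing components in $\B_{A'}$ relies on reading $\B_{A'}$ as $\{C\in\B\mid N(C)\cap A'\neq\varnothing\}$ (the definition used where the lemma is applied, e.g.\ in the proof of \cref{lemma::paretoOptiSol}), not the variant $\{C\in\B\mid N(C)\subseteq A'\}$ also appearing in the appendix, under which the step (and indeed the lemma) would fail.
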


To conclude the Phase~\ref{part1} we need to prove \cref{lemma::ingredientParateo1}.
Equipped with \cref{lemma::PackingSizeLargeEnough} we may prove statements about the LP-objective if $X_1 \cap \ball \neq \varnothing$.
In doing so, we prove another relation with respect to such a sequence, which fits the proof and will be useful later.

\begin{lemma}
\label{lemma::RedPairLPNoSequence}
Let $(A_1,B_1), \dots, (A_m,B_m)$ be a sequence of minimal strictly reducible pairs and let $X_1 \in \{0,1\}^n$ be a sample.
\begin{enumerate}
	\item If $X_1 = \bigcup_{i=1}^m A_i$, then $\lpPrim(G) = \lpPrim(G[u(X)]) + |X_1|$.
	\label{enum::1}
	\item If $X_1 \cap B_\ell \neq \varnothing$ for an $\ell \in [m]$, then $\lpPrim(G(u[X])) + |X_1| > \lpPrim(G)$.
	\label{enum::2}
\end{enumerate}
\end{lemma}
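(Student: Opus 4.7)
The plan is to first establish, by induction along the sequence, that every optimal fractional $W$-separator $\{y^*_v\}_{v \in V}$ of $G$ satisfies $y^*_v = 1$ for all $v \in \aall$ and $y^*_v = 0$ for all $v \in \ball$. The base case applies \cref{lemma::onesInLPReduciblePair} to the minimal strictly reducible pair $(A_1, B_1)$ in $G_1 := G$. For the inductive step, \cref{lemma::onesInLPSameObjective} with $V' = A_i$ shows that the restriction of $\{y^*_v\}$ to $V \setminus \bigcup_{j \leq i} A_j$ is an optimal fractional $W$-separator of $G_{i+1} := G_i - A_i$; since $(A_{i+1}, B_{i+1})$ is a minimal strictly reducible pair in $G_{i+1}$, a further application of \cref{lemma::onesInLPReduciblePair} propagates the claim. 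Summing the values of $y^*$ then yields the identity $\lpPrim(G) = |\aall| + \lpPrim(G - \aall)$.

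For part~(i), I would next observe that when $X_1 = \aall$ the vertex set $V \setminus \aall$ partitions into components of $G - \aall$ of size at least $W+1$ (which together form $u(X)$) and components of size at most $W$. Each LP constraint corresponds to a connected subgraph of size $W+1$, which necessarily lies inside a single component; hence no constraint of the LP on $G - \aall$ involves a vertex in a small component, so any optimal LP solution may be taken to be $0$ on these vertices. This gives $\lpPrim(G - \aall) = \lpPrim(G[u(X)])$, and combined with the identity above yields $\lpPrim(G) = |X_1| + \lpPrim(G[u(X)])$.

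For part~(ii), I would argue by contradiction. Let $\{y_v\}_{v \in u(X)}$ be an optimal fractional $W$-separator of $G[u(X)]$ and define $\{z_v\}_{v \in V}$ by $z_v = 1$ for $v \in X_1$, $z_v = y_v$ for $v \in u(X)$, and $z_v = 0$ otherwise. As in the proof of \cref{lemma::lowerBoundLP}, any connected subgraph of size $W+1$ that avoids $X_1$ must lie in a single component of $G - X_1$ of size at least $W+1$, hence entirely in $u(X)$, so its constraint is already handled by $\{y_v\}$; the remaining constraints are trivially satisfied by some vertex of $X_1$. Therefore $\{z_v\}$ is a feasible fractional $W$-separator of $G$ of total value $|X_1| + \lpPrim(G[u(X)])$. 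If this value equalled $\lpPrim(G)$, then $\{z_v\}$ would be an optimal fractional $W$-separator of $G$, yet $z_v = 1$ for every $v \in X_1 \cap B_\ell \subseteq \ball$ contradicts the first paragraph. Consequently, $\lpPrim(G) < |X_1| + \lpPrim(G[u(X)])$.

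The only subtle point is the induction at the start, which hinges on \cref{lemma::onesInLPSameObjective} to transport optimality from $G$ down to $G_{i+1}$ so that \cref{lemma::onesInLPReduciblePair} can be re-invoked at every step of the sequence. Once that propagation is in place, the two parts of the lemma reduce to straightforward structural observations: identifying the support of LP constraints inside components of $G - \aall$ for part~(i), and constructing the hybrid solution $\{z_v\}$ together with the optimality/contradiction argument for part~(ii).
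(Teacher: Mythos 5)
Your proof is correct, and for part~(ii) it takes a genuinely different route from the paper. Part~(i) matches the paper's argument: both telescope $\lpPrim(G) = \lpPrim(G-\aall) + \sum_{i}|A_i|$ by alternating \cref{lemma::onesInLPReduciblePair} and \cref{lemma::onesInLPSameObjective} along the sequence, and then identify $\lpPrim(G-\aall)$ with $\lpPrim(G[u(X)])$ because small components carry no LP constraints. For part~(ii), the paper instead decomposes $\lpPrim(G)$ as $\lpPrim(G-\aall) + \sum_i \lpPrim(G[A_i\cup B_i])$, localizes the problem to the single pair $(A_\ell,B_\ell)$, and invokes \cref{lemma::RedPairLPNoB}, whose proof rests on the packing result \cref{lemma::PackingSizeLargeEnough} and hence on the flow-network machinery: if $S$ meets $B$ and $|S|\leq|A|$, a $(W+1)$-packing of size $|A|-|S|+1$ survives in $G[A\cup B]-S$, forcing $\lpPrim(G[A\cup B]-S)+|S|\geq|A|+1$. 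You bypass all of that: your induction already shows that \emph{every} optimal fractional $W$-separator of $G$ vanishes on $\ball$, so the feasible solution $z$ of value $|X_1|+\lpPrim(G[u(X)])$, which equals $1$ on a vertex of $B_\ell$, cannot be optimal, giving the strict inequality directly. This is shorter and needs no new structural work beyond the two imported lemmas; what the paper's route buys is the quantitative conclusion that the value exceeds $\lpPrim(G)$ by at least $1$ (its chain ends in $\lpPrim(G)+1$), although neither the lemma statement nor its downstream uses in \cref{lemma::ingredientParateo1} and \cref{lemma::paretoOptiSol} require more than strictness. The one point you should make explicit is that your contradiction relies on \cref{lemma::onesInLPReduciblePair} holding for \emph{every} optimal fractional separator rather than for some particular one; that is indeed how the paper states and uses it, so your argument is sound as written.
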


Suitable for \cref{lemma::firstStepOptLP} we have characterized the case $X_1 \cap \ball \neq \varnothing$.
It remains to give a relation to this lemma when $\aall \cap X_1 \neq \varnothing$ and $\aall \subsetneq X_1$.
In particular, we want to ensure that in this case at least one vertex of $\aall \setminus X_1$ must be one in an optimal fractional $W$-separator of $G[u(X)]$.

\begin{lemma}
\label{lemma::CrownStaysCrown}
Let $(A,B)$ be a minimal strictly reducible pair in $G$ and let $\hat{A} \subset A$.
Then, there is a partition $A_1, \dots, A_m$ of $A \setminus \hat{A}$ with disjoint vertex sets $B_1, \dots, B_m \subseteq B$, such that for each $i \in [k]$ the tuple $(A_i,B_i)$ is a minimal strictly reducible pair in $G - \hat{A}$.  
\end{lemma}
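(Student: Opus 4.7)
I would prove the statement by strong induction on $|A \setminus \hat{A}|$. If $A \setminus \hat{A} = \varnothing$, the empty partition ($m = 0$) works trivially. Otherwise, write $A^* := A \setminus \hat{A}$ and let $\B_{A^*}$ denote the collection of connected components of $G[B]$ with at least one neighbor in $A^*$. I would combine \cref{lemma::SizeNeoghborhood} (applied to $A' = A^*$) with the inherited properties of $(A, B)$ --- namely, $N(B) \subseteq A$ in $G$, every $C \in \B$ has $|C| \leq W$, and by definition no $C \in \B_{A^*}$ is isolated in $G - \hat{A}$ --- to verify that $(A^*, V(\B_{A^*}))$ satisfies the hypotheses of \cref{lemma::balancedExpansionStrictlyReducibleExist} in $G - \hat{A}$. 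This produces a non-empty strictly reducible pair in $G - \hat{A}$ contained in $A^* \cup V(\B_{A^*})$, which I would then shrink to a minimal strictly reducible pair $(A_1, B_1)$ in $G - \hat{A}$ with $A_1 \subseteq A^*$ and $B_1 \subseteq V(\B_{A^*}) \subseteq B$.

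The next step is to invoke the inductive hypothesis on $\hat{A}' := \hat{A} \cup A_1$, which is still a proper subset of $A$ and satisfies $|A \setminus \hat{A}'| < |A \setminus \hat{A}|$ because $A_1 \ne \varnothing$. This yields a partition $A_2, \dots, A_m$ of $A^* \setminus A_1$ together with pairwise disjoint $B_2^\circ, \dots, B_m^\circ \subseteq B$ such that each $(A_i, B_i^\circ)$ is a minimal strictly reducible pair in $G - \hat{A} - A_1$. To lift these to pairs that are valid in $G - \hat{A}$ and also disjoint from $B_1$, I would define, for $i \geq 2$, the set $B_i$ as the union of those connected components of $G[B_i^\circ]$ whose neighborhood in $G$ does not intersect $A_1$. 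By construction, $N(B_i) \subseteq A_i$ in $G - \hat{A}$ and $B_i \cap B_1 = \varnothing$, since every component appearing in $B_1$ has a neighbor in $A_1$ (as witnessed by the assignment function for the reducible pair $(A_1, B_1)$).

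The main obstacle is verifying that this pruning preserves both strict reducibility and minimality of $(A_i, B_i)$ in $G - \hat{A}$: removing components from $B_i^\circ$ can in principle invalidate the assignment function. I would handle this by re-deriving the assignment function locally via the flow/Hall-type argument underlying \cref{lemma::balancedExpansionStrictlyReducibleExist}, after checking that the pruned $B_i$ still satisfies an expansion condition $|V(\B_{A'}) \cap B_i| \geq (2W-1)|A'|$ for every $A' \subseteq A_i$, with strict inequality for at least one $A'$; this estimate can in turn be extracted from \cref{lemma::SizeNeoghborhood} applied to the original pair $(A,B)$. A cleaner alternative I would consider is to avoid the recursive adjustment altogether by constructing the partition directly from the connected components of the bipartite incidence graph between $A^*$ and $\B_{A^*}$ inside $G - \hat{A}$, and then refining each such block into minimal strictly reducible pairs by iteratively peeling off minimal sub-pairs guaranteed by \cref{lemma::balancedExpansionStrictlyReducibleExist}.
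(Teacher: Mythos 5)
Your overall architecture --- peel off one minimal strictly reducible pair $(A_1,B_1)$ in $G-\hat{A}$, recurse on $\hat{A}\cup A_1$, and combine --- is the same iterative scheme the paper uses, and your extraction of the first pair (via \cref{lemma::SizeNeoghborhood} feeding \cref{lemma::balancedExpansionStrictlyReducibleExist}) is sound. The genuine gap is exactly the step you flag as ``the main obstacle,'' and your proposed repair does not close it. The recursion hands you pairs $(A_i,B_i^\circ)$ that are reducible only in $G-\hat{A}-A_1$; to make them reducible in $G-\hat{A}$ you must discard every crown component adjacent to $A_1$, and you then need the surviving components alone to supply an assignment function with $\sum_{C}g(C,a)\geq 2W-1$ for every $a\in A_i$ and $\geq 2W$ for some $a$. \cref{lemma::SizeNeoghborhood} cannot deliver this: applied to the original pair $(A,B)$ with $A'\subseteq A_i$ it lower-bounds $|V(\B_{A'})|$ where $\B_{A'}$ counts \emph{all} components of $G[B]$ adjacent to $A'$ --- including precisely those that also touch $A_1$ (or $\hat{A}$, or another $A_j$) and are therefore being thrown away --- so it gives no lower bound on the crown mass that survives the pruning. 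Your fallback (partitioning by connected components of the incidence graph and iteratively peeling) hits the same problem one level down: the second minimal pair peeled off inside a block may again have crown components adjacent to the first head.

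What is missing is a re-routing argument for the assignment function. The paper avoids post-hoc pruning altogether: at each stage it restricts the candidate crown to components not adjacent to any already-extracted head \emph{before} extracting the next minimal pair, and it invokes \cref{lemma::decideWhichA} (equivalently, the augmenting-path argument behind \cref{lemma::flowEverywhere}) to argue that the assignment can be re-routed so that the restricted pair is still \emph{strictly} reducible. With that ordering, each $B_i$ automatically contains no component adjacent to $A_j$ for $j<i$, so pairwise disjointness and validity in $G-\hat{A}$ are immediate. If you want to keep your prune-after-recursing order, you would need an analogous flow argument showing that the weight lost to the discarded components can be recovered from the survivors --- this is the one idea your sketch does not supply. (A smaller issue: your disjointness claim $B_i\cap B_1=\varnothing$ relies on every component of $G[B_1]$ having a neighbor in $A_1$, which requires first discarding isolated crown components; worth a sentence, but it is not the real obstruction.)
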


By \cref{lemma::onesInLPReduciblePair} we already know that the head vertices of a minimal strictly reducible pair in an optimal fractional $W$-separator have value one.
\cref{lemma::CrownStaysCrown} ensures that if some of the head vertices are removed, the value of the remaining head vertices in the respective optimal fractional solution remain one.
The proof of \cref{lemma::ingredientParateo1} can be found in the appendix (Section~\ref{appendix::f2}) and concludes Phase~\ref{part1}.

Next, we prove that Phase~\ref{part2} works successfully.
After Phase~\ref{part1}, we have a search point $X$ in the population $\P$ with $\aall \subseteq X_1$ such that $\lpPrim(G) = \lpPrim(G[u(X)]) + |X_1|$.
Consequently, $|X_1| \leq \opt$ and therefore we can prove that \algGlobalSemoAlt reaches a search point $X'$ with $X'_1 = \aall$ from $X$ in FPT-time.

\begin{lemma}
\label{lemma::XReducedInPop}
Let $G=(V,E)$ be an instance of the $W$-separator problem, and let $(A_1,B_1)$, $\dots$, $(A_m,B_m)$ be a sequence of minimal strictly reducible pairs in $G$, such that $G - \bigcup_{i=1} A_i$ contains no strictly reducible pair.
Using the fitness function $f_2$, the expected number of iterations of \algGlobalSemoAlt until the population $\P$ contains a search point $X$ with $X_1 = \bigcup_{i=1}^m A_i$ is upper bounded by $\O\left(n^3(\log n + \opt) + 2^{\opt}\right)$.
\end{lemma}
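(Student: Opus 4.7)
I would stage the argument in two epochs that mirror Phases~\ref{part1} and \ref{part2} in the high level roadmap. By \cref{lemma::firstStepOptLP}, at the cost of an expected $\O(n^3(\log n+\opt))$ iterations we may assume the population $\P$ contains a search point $X^{(0)}$ for which $|X^{(0)}_1|+\lpPrim(G[u(X^{(0)})])=\lpPrim(G)$ and some optimal fractional $W$-separator of $G[u(X^{(0)})]$ has all values strictly below $1$. Crucially, the ``moreover'' clause of that lemma guarantees that once this invariant first appears, it is preserved in every subsequent iteration. Applying \cref{lemma::ingredientParateo1} to $X^{(0)}$ then gives $\aall \subseteq X^{(0)}_1$, $X^{(0)}_1 \cap \ball = \varnothing$, and in particular the estimate $|X^{(0)}_1| \leq \lpPrim(G) \leq \opt$.

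Starting from such an $X^{(0)}$, the second branch of \algAltMut, which is chosen with probability $1/3$ and flips every bit currently set to $1$ independently with probability $1/2$, can in a single iteration produce the target search point $X'$ defined by $X'_1 = \aall$: we only need to flip exactly the bits in $X^{(0)}_1 \setminus \aall$ and keep the rest untouched. Given that $X^{(0)}$ is selected as parent (probability $\Omega(1/n^2)$ by \cref{lemma::singleFlipAndBoundedPop}) and the correct mutation branch is chosen, the required flip pattern has probability exactly $2^{-|X^{(0)}_1|} \geq 2^{-\opt}$. Thus each iteration succeeds with probability $\Omega(n^{-2} 2^{-\opt})$ and the expected length of this second epoch is $\O(n^2 \cdot 2^{\opt})$, which is absorbed into the bound stated.

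Finally I would verify that $X'$ enters $\P$ and never leaves. By \cref{lemma::RedPairLPNoSequence}(\ref{enum::1}), the equality $X'_1=\aall$ forces $\lpPrim(G)=\lpPrim(G[u(X')])+|X'_1|$, so \cref{corollary::paretoOptLP} makes $f_2(X')$ a pareto optimal vector; hence once sampled it is inserted into $\P$ and is never weakly dominated by any later search point. Summing the two epochs yields the claimed expected number of iterations.

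The main obstacle is coordinating the three objectives so that the target $\aall$ is unambiguously reachable by a single block mutation. \cref{lemma::ingredientParateo1} must exclude any $\ball$-vertex from $X^{(0)}_1$, since otherwise the ``flip each current $1$-bit with probability $1/2$'' mutation could not possibly produce a search point whose $1$-set equals $\aall$; and the persistence clause of \cref{lemma::firstStepOptLP} must be invoked to ensure that $X^{(0)}$ stays available as a parent until this rare block flip eventually fires. Both ingredients are already in place, so the remaining work reduces to the probabilistic bookkeeping sketched above.
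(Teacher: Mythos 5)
Your proposal is correct and follows essentially the same route as the paper's own proof: \cref{lemma::firstStepOptLP} combined with \cref{lemma::ingredientParateo1} yields a parent $X$ with $\aall \subseteq X_1$ and $|X_1|\le\opt$, after which the $b=1$ branch of the mutation produces $X'_1=\aall$ with probability $2^{-|X_1|}\ge 2^{-\opt}$, giving the same two-epoch bound. Your extra persistence/pareto-optimality argument is handled in the paper by the separate \cref{lemma::paretoOptiSol} and is not needed for the lemma as stated, and your second-epoch estimate $\O(n^2\cdot 2^{\opt})$ coincides with what the paper's own proof derives.
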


The question that remains is whether we keep $X'$ in the population once we find it.
This is where the uncovered-objective and the structural properties of minimal strictly reducible pairs come into play.

\def\xdiff{X_{\texttt{dif}}}
\def\vdiff{V_{\texttt{dif}}}

\begin{lemma}
\label{lemma::paretoOptiSol}
Let $X \in \{0,1\}^n$ and let $(A_1,B_1), \dots, (A_m,B_m)$ be a sequence of minimal strictly reducible pairs in $G$, such that $G - \bigcup_{i=1} A_i$ contains no strictly reducible pair.
If $X_1 = \bigcup_{i=1}^m A_i$, then $X$ is a pareto optimal solution.
\end{lemma}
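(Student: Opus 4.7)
The plan is to suppose for contradiction that some $Y \in \{0,1\}^n \setminus \{X\}$ weakly dominates $X$ under $f_2$, and to tighten the constraints on $Y_1$ until $Y_1 = X_1$ is forced. Since $X_1 = \aall$, Lemma~\ref{lemma::RedPairLPNoSequence}(i) gives $|X_1| + \lpPrim(G[u(X)]) = \lpPrim(G)$. Weak dominance supplies $|Y_1| \leq |X_1|$ and $\lpPrim(G[u(Y)]) \leq \lpPrim(G[u(X)])$, while Lemma~\ref{lemma::lowerBoundLP} supplies $\lpPrim(G) \leq |Y_1| + \lpPrim(G[u(Y)])$. Combining these inequalities with the above identity forces both dominance inequalities to be tight, so $|Y_1| = |\aall|$, $\lpPrim(G[u(Y)]) = \lpPrim(G[u(X)])$, and $|Y_1| + \lpPrim(G[u(Y)]) = \lpPrim(G)$. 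The last identity together with the contrapositive of Lemma~\ref{lemma::RedPairLPNoSequence}(ii) yields $Y_1 \cap \ball = \varnothing$.

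Now assume for contradiction $Y_1 \neq \aall$ and set $A' := \aall \setminus Y_1 \neq \varnothing$, $V' := Y_1 \setminus \aall$ with $|A'| = |V'|$, and $A'_\ell := A' \cap A_\ell$. Since $N_G(\ball) \subseteq \aall$ and $Y_1 \cap \ball = \varnothing$, in $G - Y_1$ every component $C$ of $G[B_\ell]$ with $N_G(C) \cap A'_\ell \neq \varnothing$ is attached to $A'_\ell$ by direct edges. Applying Lemma~\ref{lemma::SizeNeoghborhood} to $(A_\ell, B_\ell)$ (viewed in $G - \bigcup_{j<\ell} A_j$) and $A'_\ell$, the total size of such adjacent $\B_\ell$-components is at least $|A'_\ell|(2W-1) + 1$, so $A'_\ell$ together with these components forms a connected piece of $G - Y_1$ of size at least $2W|A'_\ell| + 1 > W$. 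Hence all these vertices lie in $u(Y)$, giving $|u(Y) \cap (\aall \cup \ball)| \geq 2W|A'| + t$ where $t := |\{\ell : A'_\ell \neq \varnothing\}| \geq 1$. By contrast, $u(X) \subseteq V \setminus (\aall \cup \ball) =: R$, since $G - \aall$ decomposes as $G[R] \sqcup G[\ball]$ and the components of $G[\ball]$ have size at most $W$, so $u(X) \cap (\aall \cup \ball) = \varnothing$.

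The remaining task is to control the $R$-portion tightly enough to conclude $|u(Y)| > |u(X)|$, contradicting the uncovered-objective in the dominance relation. Concretely, I aim to show $|u(Y) \cap R| \geq |u(X)| - |A'|$, which together with the previous paragraph gives $|u(Y)| \geq |u(X)| + (2W-1)|A'| + t > |u(X)|$. For this bound I would decompose $G[u(Y)]$ into $A'$-components (those meeting $A'$) and $R$-only components, use Lemma~\ref{lemma::CrownStaysCrown} to exhibit minimal strictly reducible pairs $(A^{(j)}_\ell, B^{(j)}_\ell)$ in $G - (\aall \cap Y_1)$, and apply Lemma~\ref{lemma::packingReduciblePair} to obtain a $(W+1)$-packing of total size $|A'|$ inside the $A'$-components of $G[u(Y)]$. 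This gives $\lpPrim(G[u(Y)]) \geq |A'| + \lpPrim(G[R_2])$, where $R_2$ is the union of $R$-only components, so the slack $\lpPrim(G[R]) - \lpPrim(G[R_2]) \geq |A'|$ (using $\lpPrim(G[u(Y)]) = \lpPrim(G[R])$) will translate, via Lemma~\ref{lemma::PackingSizeLargeEnough}, into a vertex count for how many $R$-vertices can fall out of big components when $V'$ is removed. The main obstacle is precisely this conversion: removing $V' \subseteq R$ can shatter a big component of $G[R]$ into arbitrarily many small pieces, so the bound cannot be obtained by naive combinatorial counting and must instead be paid for using the new structural features of minimal strictly reducible pairs established in Lemmas~\ref{lemma::decideWhichA} and~\ref{lemma::SizeNeoghborhood}.
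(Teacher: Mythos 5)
Your first two paragraphs track the paper's own argument: forcing both dominance inequalities to be tight via \cref{lemma::lowerBoundLP} and \cref{lemma::RedPairLPNoSequence}, concluding $Y_1 \cap \ball = \varnothing$ (hence $V' = Y_1 \setminus \aall \subseteq u(X)$), and using \cref{lemma::SizeNeoghborhood} to count how many vertices of $\aall \cup \ball$ drop out of the covered region when $A' = \aall \setminus Y_1 \neq \varnothing$. The gap is your third paragraph, which is a plan rather than a proof, and you yourself flag the unresolved obstacle. You aim to prove an \emph{upper bound} on how many $R$-vertices the removal of $V'$ can newly push into small components, hoping to extract it from packings and LP slack; but, as you observe, removing $V'$ can shatter a large component of $G[R]$ into many small pieces, and \cref{lemma::decideWhichA,lemma::SizeNeoghborhood} speak only about the structure inside $\aall \cup \ball$, not about $R$, so they do not convert LP slack into the vertex count $|u(Y) \cap R| \geq |u(X)| - |A'|$ you need. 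As sketched, this step does not go through.

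The paper closes the argument in the opposite direction: it never bounds the newly covered region, it exploits its size. Let $D$ be the set of vertices covered under $Y$ but uncovered under $X$, so $V' \subseteq D \subseteq u(X)$. Your own counting shows that $|u(Y)| \leq |u(X)|$ forces $|D \setminus V'| \geq |V'|(2W-1) + 1$. Each connected component of $G[D \setminus V']$ lies inside a component of $G - Y_1$ of size at most $W$, so it has size at most $W$ and (as the paper argues) a neighbour in $V'$. Applying \cref{lemma::balancedExpansionStrictlyReducibleExist} to $G[D]$ with head candidates $V'$ and crown candidates $D \setminus V'$ therefore produces a strictly reducible pair, hence a minimal one, whose head lies in $V' \subseteq u(X)$; since $D \subseteq u(X)$ and the relevant components persist in $G[u(X)] - V'$, this is a minimal strictly reducible pair in $G - \aall$, contradicting the hypothesis that $G - \bigcup_i A_i$ contains none. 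That is the missing idea: the standing assumption on $G - \aall$ is precisely what forbids a large newly covered region, so no quantitative control of the shattering is required.
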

\begin{proof}
Let $A = \bigcup_{i=1}^m A_i$ and $B = \bigcup_{i=1}^m B_i$.
We will prove that if there is a search point $X'$ that dominates $X$, then $G - A$ contains a minimal strictly reducible contradicting the precondition of the lemma.
Note that a minimal strictly reducible pair in $G - A$ have to be in $G[u(X)]$ as the other components in $G-A$ have size at most $W$.

If $X_1 = \bigcup_{i=1}^m A_i$, then by \cref{lemma::RedPairLPNoSequence} $\lpPrim(G) = \lpPrim(G[u(X)]) + |X_1|$.
That is, we might restrict to solutions $X' \in \{0,1\}^n$ with $\lpPrim(G) = \lpPrim(G[u(X')]) + |X'_1|$ as well as $|X'_1| = |X_1|$ and can focus on the objective $u(X)$ and $u(X')$, respectively.
Note that it cannot happen that $|X'_1| < |X_1|$ and $\lpPrim(G[u(X')]) \leq \lpPrim(G[u(X)])$ as $\lpPrim(G) = \lpPrim(G[u(X)]) + |X_1| > \lpPrim(G[u(X')]) + |X'_1|$ contradicting \cref{lemma::lowerBoundLP}.

W.l.o.g.~we can assume that every connected component of $G-X_1=G-A$ of size at most $W$ is also in $G[B]$, i.e., $V \setminus u(X) = A \cup B$.
Let $g_1 \colon \B_1 \times A_1 \to \N_0,\dots,g_m \colon \B_m \times A_m \to \N_0$ be the according assignment functions of $(A_1,B_1), \dots, (A_m,B_m)$ and let $g \colon \B \times A \to \N_0$ be defined as $g(a,C) = g_i(a,C)$ if $a \in A_i$, $C \in \B_i$ and otherwise $g(a,C) = 0$.
Suppose there is such an $X' \neq X$ as described above with $|u(X')| \leq |u(X)|$, or equivalently $|V \setminus u(X')| \geq |V \setminus u(X)|$.
Note that $|V \setminus u(X)| - |A| \geq |A|(2W-1)+1$, 
since for each $a \in A$ and at least for one $a' \in A$ we have $\sum_{C \in \B} g(C,a) \geq 2W-1$ and $\sum_{C \in \B} g(C,a') \geq 2W$.

We define $V(\B_{\Tilde{A}}) \subseteq B$ for $\Tilde{A} \subseteq A$ as the vertices in the components $\{C \in \B \mid N(C) \cap \Tilde{A} \neq \varnothing\}$. 
Let $\xdiff = X'_1 \setminus  X_1 = X'_1 \setminus A$ and let $\vdiff = (V \setminus u(X')) \setminus (V \setminus u(X))$.
Note that $\vdiff \subseteq u(X)$ and $|\xdiff| = |A \setminus X'_1|$ by $|A| = |X'_1|$.
From $\lpPrim(G) = \lpPrim(G[u(X')]) + |X'_1|$, we obtain by \cref{lemma::ingredientParateo1} that $\xdiff \cap B = \varnothing$ and therefore $\xdiff \subseteq \vdiff$ as no vertex of $\xdiff$ is in $A \cup B = V \setminus u(X)$.
Thus, by the assignment function $g$ each vertex in $A' = A \setminus X'_1$ is in a connected component of size at least $W+1$ of $G[u(X')]$ and therefore $A'$ as well as $V(\B_{A'})$ are not in $V \setminus u(X')$.
Furthermore, for at least one $j \in [m]$ we have $A_j \subsetneq X'_1$ and by \cref{lemma::SizeNeoghborhood} we obtain that $|V(\B_{A_j \setminus X'_1})| \geq |A_j \setminus X'_1| (2W-1)+1$.
That is, to satisfy now $|V \setminus u(X')| \geq |V \setminus u(X)|$ we must have $|\vdiff| - |\xdiff| \geq |\xdiff|(2W-1)+1$ as at least $|A \setminus X'_1|(2W-1)+1+|A \setminus X'_1|$ vertices are in $V \setminus u(X)$ that are not in $V \setminus u(X')$.
Observe that each connected component $C$ of $G[\vdiff - \xdiff]$ satisfy $N(C) \cap \xdiff \neq \varnothing$ and $|C| \leq W$.
From this, we obtain that $G[\vdiff]$ contains a strictly reducible pair $(\hat{A},\hat{B})$ with $\hat{A} \subseteq \xdiff$ and $\hat{B} \subseteq \vdiff \setminus \xdiff$ by \cref{lemma::balancedExpansionStrictlyReducibleExist} and therefore also a minimal strictly reducible pair $(\hat{A}',\hat{B}')$ with $\hat{A}' \subseteq \hat{A}$ and $\hat{B}' \subseteq \hat{B}$.
In particular, $(\hat{A}',\hat{B}')$ is a minimal strictly reducible in $G-A$, since $\vdiff \subseteq u(X)$ and the connected components in $G[\vdiff \setminus \xdiff]$ exist identically in $G[u(X)]-\xdiff$.
\end{proof}
We are ready for the final theorem of this section, which shows that Phase~\ref{part3} also works successfully.
\paragraph{Proof of \cref{thm::OptSolf2}:}
Let $(A_1,B_1),\dots,(A_m,B_m)$ be a sequence of minimal strictly reducible pairs, such that $G-\bigcup_{i=1}^m A_i$ contains no strictly reducible pair.
Furthermore, let $\P$ be a population with respect to $f_2$ in the algorithm \algGlobalSemoAlt. 
By \cref{lemma::XReducedInPop} we have a search point $X \in \P$ with $X_1 = \bigcup_{i=1}^m A_i$ after $\O\left(n^3(\log n + \opt) + 2^{\opt}\right)$ iterations  in expectation. 
Moreover, $X$ is a pareto optimal solution by \cref{lemma::paretoOptiSol}.

Since $G[u(X)]$ contains no strictly reducible pair, we can derive from \cref{lemma::existenceReducible} that $|V(G[u(X)])| \leq 2 \cdot \opt \cdot W$. 
The algorithm \algGlobalSemoAlt calls with $1/3$ probability the mutation that flips every vertex $u(X)$ with $1/2$ probability in $X$.
That is, reaching a state $X'$ from $X$, such that $X'_1 = V^*$ has a probability of at least $\Omega\left(2^{-2 \cdot \opt \cdot W}\right)$, where  selecting $X'$ in $\P$ has probability $\Omega(1/n^2)$ (cf.~\cref{lemma::singleFlipAndBoundedPop}).
Thus, once $X$ is contained in $\P$ it takes in expectation $\O\left(n^2 \cdot 4^{\opt \cdot W} \right)$ iterations reaching $X'$.
As a result, the algorithm needs in total $\O\left(n^3(\log n + \opt) + n^2 \cdot 4^{\opt \cdot W} \right)$ iterations finding an optimal $W$-separator in expectation.
	
	\section{Approximations}\label{sec::apx}

In this section we consider the $W$-separator problem with the fitness $f_2$ and $f_3$ associated with \algGlobalSemo and \algGlobalSemoAlt.
We show that the algorithms find approximate solutions when we reduce their overhead.
In particular, we prove the following theorems.

\begin{theorem}
	\label{thm::approx}
	Using the the fitness function $f_3$, the expected number of iterations of \algGlobalSemo until it finds a $(W+1)$-approximation in $G=(V,E)$ is upper bounded by $\O\left(n^2(\log n + W \cdot \opt)\right)$.
\end{theorem}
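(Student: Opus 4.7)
The plan has two phases. First, by Lemma~\ref{corollary::zeroSol}, the all-zero search point $0^n$ enters $\P$ after $\O(n^2 \log n)$ expected iterations; since $|X_1|=0$ is the unique smallest value of the first objective, $0^n$ is pareto optimal for $f_3$ and never leaves $\P$. The second phase exploits the LP-objective as a monotone potential: starting from $0^n$, we show that the algorithm incrementally produces search points whose LP value drops by at least $1/(W+1)$ per improvement step, until a feasible solution of size at most $(W+1)\opt$ is reached.

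The core LP observation is a standard rounding argument. For any $X$ with $u(X)\neq\varnothing$, any optimal fractional $W$-separator $y$ of $G[u(X)]$ must place weight $y_v\geq 1/(W+1)$ on some $v\in u(X)$: the set $u(X)$ contains, by definition, at least one connected $(W+1)$-subgraph $S$, and the constraint $\sum_{v\in S} y_v\geq 1$ forces such a $v$. Flipping that bit in $X$ yields $X'$ with $|X'_1|=|X_1|+1$ and
\[
    \lpPrim(G[u(X')]) \;\leq\; \lpPrim(G[u(X)\setminus\{v\}]) \;\leq\; \lpPrim(G[u(X)]) - y_v \;\leq\; \lpPrim(G[u(X)]) - \tfrac{1}{W+1},
\]
where the first inequality uses $u(X')\subseteq u(X)\setminus\{v\}$ and the second uses that restricting $y$ to $u(X)\setminus\{v\}$ is LP-feasible for $G[u(X)\setminus\{v\}]$.

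I would then establish by induction on $k\in\{0,\dots,(W+1)\opt\}$ the invariant: after $\O(n^2\log n + k n^2)$ expected iterations, $\P$ contains a search point $X$ with $|X_1|\leq k$ and $\lpPrim(G[u(X)])\leq \max(0,\, \opt - k/(W+1))$. The base case is $0^n$ together with $\lpPrim(G)\leq\opt$. For the inductive step, either $\lpPrim(G[u(X)])=0$ and we are already done, or a good vertex $v$ as above exists; by Lemma~\ref{corollary::singleFlipAndBoundedPop}, selecting $X$ and flipping only the bit $v$ has probability $\Omega(1/n^2)$, so a witness for $k+1$ appears in $\O(n^2)$ additional expected iterations. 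Taking $k=(W+1)\opt$ yields a search point $X$ with $|X_1|\leq (W+1)\opt$ and $\lpPrim(G[u(X)])=0$; the latter forces $u(X)=\varnothing$, since any nonempty $u(X)$ admits a connected $(W+1)$-subgraph whose LP constraint forces the objective to be at least $1/(W+1)>0$. Thus $X$ is a feasible $(W+1)$-approximate $W$-separator, and the total expected cost is $\O(n^2(\log n + W\cdot\opt))$.

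The main obstacle is that the specific witness $X$ in the invariant can be evicted by population churn mid-phase. Here I would observe that any $Y\in\P$ that dominates $X$ satisfies $|Y_1|\leq|X_1|\leq k$ and $\lpPrim(G[u(Y)])\leq \lpPrim(G[u(X)])$, so $Y$ is itself an invariant-compliant witness and the invariant is sticky; the induction therefore goes through without tracking a fixed bit string. The $\Omega(1/n)$ selection probability for any specific pareto optimal point used above relies on the pareto front having at most $n+1$ entries, which holds for $f_3$ since $|X_1|\in\{0,\dots,n\}$ is the only integer-valued objective.
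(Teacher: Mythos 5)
Your proof is correct and follows essentially the same route as the paper: reach $0^n$ in $\O(n^2\log n)$ expected iterations via \cref{corollary::zeroSol}, then use the fact that any optimal fractional $W$-separator of a nonempty $G[u(X)]$ has a variable of value at least $1/(W+1)$ (the paper's \cref{lemma::oneDividedByW}) to drive a fitness-level descent of at most $(W+1)\cdot\opt$ single-bit improvements, each costing $\O(n^2)$ expected iterations by \cref{corollary::singleFlipAndBoundedPop}. The only cosmetic difference is that the paper packages the second phase as \cref{lemma::apxIsSave} with the single potential $|X_1| + (W+1)\cdot\lpPrim(G[u(X)]) \leq (W+1)\cdot\opt$, whereas you maintain the two bounds separately in lockstep; both invariants are preserved under $f_3$-domination, so the stickiness argument is the same.
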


\begin{theorem}
	\label{thm::ptas}
	Let $G=(V,E)$ be an instance of the $W$-separator problem and let $\varepsilon \in [0,1)$.
	
	\begin{enumerate}
		\item\label{thm::item1APX}
		Using the fitness function $f_2$, the expected number of iterations of \algGlobalSemoAlt until an $(1 + \varepsilon(3/2 W - 1/2))$-approximation is sampled is upper bounded by\\ $\O\left(n^3(\log n + W \cdot \opt) + 2^\opt + n^2 \cdot 4^{(1-\varepsilon) \opt \cdot W} \right)$.
		\item\label{thm::item2APX} 
		Using the fitness function $f_3$, the expected number of iterations of \algGlobalSemoAlt until a $(2 + \varepsilon(3/2 W - 1/2))$-approximation is sampled is upper bounded by\\ $\O\left(n^2(\log n + W \cdot \opt) + n \cdot 4^{(1-\varepsilon) \opt \cdot W}\right)$.
	\end{enumerate}
\end{theorem}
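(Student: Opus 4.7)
The plan is to adapt the three-phase proof of \cref{thm::OptSolf2} to trade exactness for a smaller brute-force step, parameterized by $\varepsilon$. The first two phases remain essentially unchanged in spirit; only the third phase is replaced by a ``shortened'' search over a subset of $u(X)$ whose completion yields an approximate solution.

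For part~(i) with fitness $f_2$: following the argument of \cref{lemma::firstStepOptLP} (and absorbing a slightly larger $W\cdot\opt$ gap appearing when accounting for pareto-front positions whose LP values range over intervals of length up to $W\cdot\opt$ rather than $\opt$), within $\O(n^3(\log n + W\opt))$ iterations the population contains some $X$ with $\lpPrim(G) = |X_1| + \lpPrim(G[u(X)])$ and an optimal fractional separator on $G[u(X)]$ with every coordinate strictly below one. A further $\O(2^{\opt})$ iterations (\cref{lemma::XReducedInPop}) filter $X_1$ down to $\aall$; by \cref{lemma::paretoOptiSol} this state is pareto optimal in $\P$, and by \cref{lemma::existenceReducible} we have $|u(X)| \le 2\opt\cdot W$. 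In the third phase, rather than flipping all bits of $u(X)$ (cost $4^{\opt W}$), the alternative mutation restricted to $u(X)$ samples any fixed subset $S\subseteq u(X)$ of size at most $2(1-\varepsilon)\opt\cdot W$ with probability $\Omega(2^{-2(1-\varepsilon)\opt W})$, so $\O(n^2 \cdot 4^{(1-\varepsilon)\opt W})$ iterations suffice to sample the target $X'$ with $X'_1 = X_1 \cup S$. To establish the approximation ratio, it suffices to exhibit such an $S$ whose completion has total size at most $(1+\varepsilon(3W/2 - 1/2))\opt$, combining the optimal sub-separator on a ``heavy'' portion of $G[u(X)]$ with a $(W+1)$-integrality-gap LP rounding (cf.~\cref{thm::approx}) on the remainder.

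For part~(ii) with fitness $f_3$: the structure is identical but without phase~2, since $f_3$ lacks the uncovered-objective used in \cref{lemma::paretoOptiSol} to keep a filtered search point from being dominated. Phase~1 takes $\O(n^2(\log n + W\opt))$ iterations by (the appropriate adaptation of) \cref{corollary::firstStepOptLP}, and the brute-force phase contributes $\O(n \cdot 4^{(1-\varepsilon)\opt W})$ because population sizes and selection probabilities improve by a factor of $n$ under $f_3$ (\cref{corollary::singleFlipAndBoundedPop}). The additive $2$ at $\varepsilon=0$ in the approximation factor reflects that without phase~2 filtering, the baseline solution $|X_1| + |u(X)|$ can already be up to twice $\opt$, by a half-integrality argument analogous to vertex cover.

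The main obstacle will be the approximation lemma underlying the last phase: showing that for any instance $H = G[u(X)]$ of size $\le 2\opt\cdot W$ with no strictly reducible pair, and any $\varepsilon \in [0,1)$, there exists $S \subseteq V(H)$ of size $\le 2(1-\varepsilon)\opt\cdot W$ whose completion by LP rounding on the remainder adds at most $\varepsilon(3W-1)/2 \cdot \opt$ extra vertices over $\opt - |X_1|$. The cleanest route is likely to order vertices by LP value, brute-force the ``heavy'' portion (optimally removing its contribution), and invoke a $(W+1)$-factor LP rounding on the ``light'' portion; balancing these two contributions via the structural properties of minimal strictly reducible pairs from \cref{lemma::PackingSizeLargeEnough,lemma::SizeNeoghborhood} should yield the stated factor $\varepsilon(3W/2 - 1/2)$ rather than a cruder $W+1$ or $2W$.
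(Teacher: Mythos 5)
Your high-level plan matches the paper's: keep Phases~1--2 of \cref{thm::OptSolf2} (resp.\ only Phase~1 for $f_3$), replace the final brute-force over all of $u(X)$ by a partial one of cost $4^{(1-\varepsilon)\opt W}$, and finish via the LP-rounding completion of \cref{lemma::apxIsSave}. But the central step is not correctly worked out, and the way you state it cannot work.

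You claim the mutation restricted to $u(X)$ ``samples any fixed subset $S\subseteq u(X)$ of size at most $2(1-\varepsilon)\opt W$ with probability $\Omega(2^{-2(1-\varepsilon)\opt W})$, so \dots iterations suffice to sample the target $X'$ with $X'_1 = X_1\cup S$.'' Hitting a \emph{specific} target $X'_1 = X_1\cup S$ requires prescribing the outcome of \emph{every} bit of $u(X)$ (the bits outside $S$ must all stay unflipped), so that event has probability $2^{-|u(X)|}=2^{-2\opt W}$ regardless of $|S|$ --- exactly the cost you are trying to avoid. The only way to get the exponent down to $(1-\varepsilon)$ is to prescribe only $(1-\varepsilon)\cdot 2W|S|$ of the bits and let the remaining $\varepsilon$-fraction be random, and then one must prove that the \emph{random} residue does not destroy the guarantee. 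This is precisely what the paper does: writing $S$ for an optimal separator of $G[u(X)]$ and $U=u(X)\setminus S$ with $|U|\le(2W-1)|S|$ (from the absence of strictly reducible pairs via \cref{lemma::existenceReducible}), it conditions on flipping $(1-\varepsilon)|S|$ fixed vertices of $S$ and not flipping $(1-\varepsilon)|S|(2W-1)$ fixed vertices of $U$, and then bounds the potential $|X'_1|+(W+1)\lpPrim(G[u(X')])$ of the resulting (random) point by $|X_1|+|S|+|S|\varepsilon(\tfrac32 W-\tfrac12)$, using $\lpPrim(G[u(X')])\le|S|-|S'|$. Your proposal contains no substitute for this bookkeeping; instead you defer the approximation guarantee to an unproven ``order by LP value, brute-force the heavy part, round the light part'' lemma and assert that balancing ``should yield'' the factor $\varepsilon(\tfrac32 W-\tfrac12)$. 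That lemma is the whole content of the theorem and is not the route the paper takes (no heavy/light split by LP value occurs anywhere); as stated it is a conjecture, not a proof.

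Two smaller points. For part~(ii), the additive $2$ does not come from a half-integrality phenomenon: it comes from the fact that without Phase~2 one only knows $|X_1|\le\opt$ and $|S|\le\opt$ separately, rather than $|X_1|+|S|=\opt$ as guaranteed by \cref{thm::saveReduction} after the reduction, so the bound $|X_1|+|S|+\dots$ doubles. And the $W\cdot\opt$ inside the polynomial terms of the stated runtimes is contributed by the $\O(n^3 W\opt)$ (resp.\ $\O(n^2 W\opt)$) completion phase of \cref{lemma::apxIsSave}, not by an enlarged Phase~1 as you suggest.
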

 
Note that \cref{thm::ptas} implies that we can hope for incremental progress towards an optimal solution if we compare it to \cref{thm::OptSolf2}.
Note also that \cref{thm::ptas}~(\ref{thm::item1APX}) has a running time of $\O\left(n^3(\log n + W \cdot \opt) + n^2 \cdot 4^{(1-\varepsilon) \opt \cdot W} \right)$ if $\varepsilon < 1/2$.

To prove our theorems, we show that once there is a search point in the population that has a desired target value with respect to the LP-objective and the one-objective, then the algorithms find in polynomial time a $W$-separator that does not exceed this target value.
That is, the 1-bits of this search point do not necessarily have to form a $W$-separator.

\begin{lemma}
	\label{lemma::apxIsSave}
	Let $G=(V,E)$ be an instance of the $W$-separator problem, $\P$ a population with respect to the fitness function $f_2$ or $f_3$, $c > \opt$, and $X \in \P$ a search point satisfying $|X_1| + (W+1) \cdot \lpPrim(G[u(X)]) \leq c$.
	Using the the fitness function $f_2$ or $f_3$, the expected number of iterations of \algGlobalSemo until it finds a $W$-separator $S$ in $G$ with $|S| \leq c$ is upper bounded by $\O\left(n^2 W \cdot \opt\right)$ or $\O\left(n^3 W \cdot \opt\right)$, respectively.
\end{lemma}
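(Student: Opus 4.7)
The plan is to combine the standard $(W+1)$-rounding of the $W$-separator LP with a potential-function tracking of \algGlobalSemo's population. Define $\Phi(Y) := |Y_1| + (W+1)\lpPrim(G[u(Y)])$; by hypothesis $\Phi(X) \leq c$ for the initial $X \in \P$, and the goal is to produce some $Y \in \P$ with $u(Y) = \varnothing$ and $|Y_1| \leq c$, which then satisfies $|Y_1| = \Phi(Y) \leq c$.

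First I would establish a one-bit progress lemma. Whenever $u(Y) \neq \varnothing$, some vertex $v^* \in u(Y)$ must take value at least $1/(W+1)$ in every optimal fractional $W$-separator of $G[u(Y)]$, since a $(W+1)$-connected subgraph inside $u(Y)$ contributes a constraint $\sum_{v \in S} y_v \geq 1$ of cardinality $W+1$. Flipping bit $v^*$ in $Y$ produces $Y'$ with $|Y'_1| = |Y_1|+1$ and $u(Y') \subseteq u(Y) \setminus \{v^*\}$, because each vertex remaining in a large component of $G - Y'_1$ was already in a large component of $G - Y_1$. Restricting the LP optimum of $G[u(Y)]$ to $u(Y')$ remains feasible on $G[u(Y')]$ since every $(W+1)$-connected subgraph of $G[u(Y')]$ is already one of $G[u(Y)]$, yielding $\lpPrim(G[u(Y')]) \leq \lpPrim(G[u(Y)]) - y_{v^*} \leq \lpPrim(G[u(Y)]) - 1/(W+1)$, and hence $\Phi(Y') \leq \Phi(Y) \leq c$.

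Next I would verify that the invariant ``some $Y \in \P$ has $\Phi(Y) \leq c$'' is preserved throughout the run: any $Z$ weakly dominating a population member $Y$ under $f_2$ or $f_3$ satisfies $|Z_1| \leq |Y_1|$ and $\lpPrim(G[u(Z)]) \leq \lpPrim(G[u(Y)])$, so $\Phi(Z) \leq \Phi(Y) \leq c$ and the invariant survives every eviction. Let $\lp^* := \min\{\lpPrim(G[u(Y)]) : Y \in \P,\ \Phi(Y) \leq c\}$ be the progress measure. In each iteration, the event of selecting a witness $Y$ attaining $\lp^*$ and flipping exactly its bit $v^*$ has probability $\Omega(1/n^3)$ under $f_2$ by \cref{lemma::singleFlipAndBoundedPop} and $\Omega(1/n^2)$ under $f_3$ by \cref{corollary::singleFlipAndBoundedPop}. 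The offspring $Y'$ cannot be rejected, for if some $Z \in \P$ dominated $Y'$ then $\Phi(Z) \leq c$ and $\lpPrim(G[u(Z)]) \leq \lpPrim(G[u(Y')]) < \lp^*$, contradicting minimality of $\lp^*$; so $Y'$ enters $\P$ and $\lp^*$ drops by at least $1/(W+1)$.

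Since $\lpPrim(G[u(X)]) \leq \lpPrim(G) \leq \opt$, at most $(W+1)\opt$ successful decrements drive $\lp^*$ to zero; at that point the witness $Y$ has no $(W+1)$-connected subgraph in $G[u(Y)]$, forcing $u(Y) = \varnothing$ and making $Y_1$ a $W$-separator of size $\Phi(Y) \leq c$. Multiplying the per-step expectation by $(W+1)\opt$ yields the bounds. The main obstacle I anticipate is the LP-monotonicity step inside the progress lemma: one must carefully justify that restricting an optimal fractional solution of $G[u(Y)]$ to $u(Y')$ stays feasible on $G[u(Y')]$, which hinges on the structural observation that $(W+1)$-connected subgraphs of $G[u(Y')]$ embed into $G[u(Y)]$, and that adding $v^*$ to $Y_1$ can only shrink, never enlarge, the uncovered subgraph. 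Once this is pinned down, the remainder reduces to a potential-decrease computation against the selection and single-flip probabilities already catalogued in the preliminaries.
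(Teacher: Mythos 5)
Your proposal is correct and follows essentially the same route as the paper's proof: the key ingredient is the observation that a nonempty $u(Y)$ forces some LP variable to be at least $1/(W+1)$ (the paper's \cref{lemma::oneDividedByW}), flipping that bit preserves the quantity $|Y_1|+(W+1)\lpPrim(G[u(Y)])\leq c$ while dropping the LP objective by $1/(W+1)$, the domination structure of $f_2$/$f_3$ guarantees the witness is never lost, and at most $(W+1)\opt$ such decrements combined with the single-flip probabilities of \cref{lemma::singleFlipAndBoundedPop,corollary::singleFlipAndBoundedPop} give the bound. Your explicit potential $\Phi$ and progress measure $\lp^*$ are just a cleaner packaging of the paper's ``choose $X$ minimizing $\lpPrim(G[u(X)])$ subject to the $c$-inequality,'' and your assignment of $\O(n^3 W\cdot\opt)$ to $f_2$ and $\O(n^2 W\cdot\opt)$ to $f_3$ is the one consistent with the cited population bounds.
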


We conclude this section with the prove of \cref{thm::ptas}~(\ref{thm::item1APX}).
Thereby, we basically need to show that we reach in the stated runtime a search point $X$ that satisfies the precondition of \cref{lemma::apxIsSave} with the desired approximation value.

\def\Xred{X_{\texttt{red}}}

\paragraph{\textbf{Proof of \cref{thm::ptas}~(\ref{thm::item1APX})}:}
Let $X \in \{0,1\}^n$ be a search point such that $G[u(X)]$ contains no minimal strictly reducible pair (\emph{irreducible-condition}).
Furthermore, let $S$ be an optimal solution of $G[u(X)]$ and let $U = u(X) \setminus S$.
Note that $|S| \leq \opt$.
Since $G[u(X)]$ contains no minimal strictly reducible pair, we have $|U| = |u(X)| - |S| \leq 2W|S| - |S| = |S|(2W-1)$ by \cref{lemma::existenceReducible}.

Recall that \algGlobalSemoAlt chooses with $1/3$ probability the mutation that flips every bit corresponding to the vertices in $u(X)$ with $1/2$ probability.
From this, the search point $X$ has a probability of $\Omega\left(2^{-(1-\varepsilon) |S| - (1-\varepsilon) |S| \cdot (2W-1) }\right) = \Omega\left(4^{-(1-\varepsilon) |S| \cdot W }\right)$ to flip $(1-\varepsilon) |S|$ fixed vertices of $S$ and to not flip $(1-\varepsilon) |S| \cdot (2W-1)$ fixed vertices of $U$ in one iteration.
Independently from this, half of the remaining vertices of $S$ and $U$ are additionally flipped in this iteration, i.e., $\frac{1}{2} \varepsilon |S|$ of $S$ and  $\frac{1}{2} \varepsilon |U|$ of $U$.
Let $S'$ and $U'$ be the flipped vertices in this iteration and let $X'$ be the according search point.
Note that $\lpPrim(X') \leq |S| - |S'|$ simply because there is a $W$-separator of $G[u(X)] - S'$ of size $|S| - |S'|$.
Hence, we have
\begin{align*}
	|X'_1| + (W+1) \cdot \lpPrim(G[u(X')])
	& = |X_1| + |S'| + |U'| + (W+1) \cdot \lpPrim(G[u(X')])\\
	&\leq |X_1| + |S'| + |U'| +  (W+1) \cdot (|S| - |S'|)\\
	&= |X_1| + |S|(W+1) - |S'|W + |U'|.
\end{align*}
Next, we upper bound $|S'|$ and $|U'|$ in terms of $|S| \leq \opt$.
Using the fact $|U| \leq |S| (2W-1)$, we obtain 
$|U'| = \frac{1}{2} \varepsilon |U| \leq \frac{1}{2} \varepsilon |S|(2W-1) = \varepsilon |S| W - \frac{1}{2} \varepsilon |S|$.
Regarding $S'$ we have
$|S'| = (1-\varepsilon)|S| + \frac{1}{2} \varepsilon |S| = |S| - \varepsilon |S| + \frac{1}{2} \varepsilon |S|$.
As a result, we obtain
\begin{align*}
	& |X'_1| + (W+1) \cdot \lpPrim(G[u(X')])\\
	& \leq |X_1| + |S|(W+1) - |S'|W + |U'|\\
	&\leq |X_1| + |S|(W+1) - (|S| - \varepsilon |S| + \frac{1}{2} \varepsilon |S|)W + \varepsilon |S| W - \frac{1}{2} \varepsilon |S|\\
	&\leq |X_1| + |S|(W+1) - |S|W + \varepsilon |S| W - \frac{1}{2} \varepsilon |S| W + \varepsilon |S| W - \frac{1}{2} \varepsilon |S|\\
	&= |X_1| + |S| + |S|\left(2 \varepsilon W - \frac{1}{2} \varepsilon W  - \frac{1}{2} \varepsilon\right). 
\end{align*}
Observe that once a desired $X$ is guaranteed to be in the population, an event described above occurs after $\O\left(n^2 \cdot 4^{(1-\varepsilon) |S| \cdot W }\right)$ iterations in expectation for the fitness functions $f_2$, where the factor $n^2$ comes from selecting $X$ (cf.~\cref{lemma::singleFlipAndBoundedPop}).
	
	Let $(A_1,B_1),\dots,(A_m,B_m)$ be a sequence of minimal strictly reducible pairs, such that $G-\bigcup_{i=1}^m A_i$ contains no strictly reducible pair.
	By \cref{lemma::XReducedInPop} we have a search point $X$ in the population $\P$ with $X_1 = \bigcup_{i=1}^m A_i$ after $\O\left(n^3(\log n + \opt) + 2^{\opt}\right)$ iterations in expectation.
	Note that $X$ satisfies the irreducible-condition.
	Moreover, $X$ is a pareto optimal solution by \cref{lemma::paretoOptiSol}.
	By \cref{thm::saveReduction} we have $|X_1| = \opt - |S|$.
	Using that $|S| \leq \opt$, we obtain
	\begin{align*}
		|X'_1| + (W+1) \cdot \lpPrim(G[u(X')])
		&\leq |X_1| + |S| + |S|\left(2 \varepsilon W - \frac{1}{2} \varepsilon W  - \frac{1}{2} \varepsilon\right)\\
		&= \opt - |S| + |S| + |S|\left(2 \varepsilon W - \frac{1}{2} \varepsilon W  - \frac{1}{2} \varepsilon\right)\\
		&\leq \opt\left(1 + \varepsilon\left(\frac{3}{2}W - \frac{1}{2}\right)\right).
	\end{align*}
	As a result, by the choice of $X$ the resulting search point $X'$ satisfies the precondition of \cref{lemma::apxIsSave} with $c= \opt\left(1 + \varepsilon\left(\frac{3}{2}W - \frac{1}{2}\right)\right)$.
	That is, once $X'$ is in the population $\P$, the algorithm \algGlobalSemoAlt need in expectation $\O(n^3 W \cdot \opt)$ iterations having a search point in $\P$ which is a $\left(1 + \varepsilon\left(\frac{3}{2}W - \frac{1}{2}\right)\right)$-approximation. 
	In summary, in expectation the desired search point $X'$ is in $\P$ after $\O\left(n^3(\log n + W \cdot \opt) + 2^\opt + n^2 \cdot 4^{(1-\varepsilon) \opt \cdot W} \right)$ iterations.
	
	\section{Conclusion}
	
	In this work, we studied the behavior of evolutionary algorithms with different multi-objective fitness functions for the $W$-separator problem from the perspective of parameterized complexity.
	More precisely, we investigated the running time of such evolutionary algorithms depending on the problem parameter $\opt + W$.
	Our analysis was based on properties of reducible structures, showing that, given a suitable fitness function, the evolutionary algorithm tends to reduce the given instance along these structures.
	Once this is done, the running time for either obtaining an arbitrarily close approximation or an exact solution is tractable with respect to the problem parameter.
	In particular, this shows that evolutionary algorithms solve the $W$-separator problem in expectation in FPT-time for the parameter $\opt + W$.

	\bibliography{literature}
	
	\appendix
	
	\section{Omitted proofs of Section~\ref{sec::prelim}}\label{appendix::prelim}

\subsection*{Proof of \cref{lemma::singleFlipAndBoundedPop}}
\begin{proof}
	We start to bound the size of the population $\P$.
	Given $f_1$ or $f_2$ for the bit sequences $\P$, the first and second entries each have at most $(n+1)$ distinct values, and we keep at most one for each possibility.
	Therefore, we have $|\P| \leq (n+1)^2$ and thus selecting a certain search point $X \in \P$ has probability $\Omega(1/n^2)$.
	In \algGlobalSemo we mutate every bit in $X$ with probability $1/n$.
	That is, we obtain a probability of $1/n \cdot (1 - 1/n)^{n-1} \geq 1/ne \in \Omega(1/n)$ to flip a certain bit in $X$.
	This probability decreases only by a constant factor of $1/3$ in \algGlobalSemoAlt and keeps therefore the probability by $\Omega(1/n)$ for this event.
	As a result, selecting a certain search point $X \in \P$ and flipping only one single bit in it has probability $\Omega(1/n^3)$ for both algorithms.
\end{proof}

\subsection*{Proof of \cref{lemma::zeroSol}}

\begin{proof}
	Let $\P \neq \varnothing$ and let $X^{\min} \in \P$ be the bit string in $\P$ with the minimal number of ones, where $i := |X^{\min}_1|$.
	By \cref{lemma::singleFlipAndBoundedPop} the probability that $X^{\min}$ is chosen in one iteration of both algorithms is $\Omega(1/n^2)$.
	If we consider \algGlobalSemo, then the probability that the mutation of $X^{\min}_1$ results in a bit string $X$ with $|X_1| < i$ is at least 
	$i/n \cdot (1 - 1/n)^{n-1} \geq i/ne$, which is the probability that only one 1-bit is flipped and nothing else.
	Note that the probability changes by a factor of $1/3$ in the algorithm \algGlobalSemoAlt.
	Thus, the probability for both algorithms that the population gets after an iteration a bit string that has less number of ones compared to the previous population is $\Omega(i/n^3)$.
	This in turn means that the expected number of iterations this happens is $\O(n^3/i)$.  
	Using the method of fitness based partitions \cite{DBLP:conf/ppsn/Sudholt10} and summing up over the different values of $i$ we obtain an expected time of $\sum_{i=1}^{n} \O(n^3/i) = \O(n^3 \log n)$ that $0^n$ is in $\P$.
\end{proof}
	
	\section{Omitted proofs of Section~\ref{sec::f1}}\label{appendix::f1}

\subsection*{Proof of \cref{lemma::reducedInstance_f1}}
\begin{proof}
	Let $V' = \{v_1, \dots, v_\ell\} \subseteq V$ be the vertices with degree larger than $k+W$ (\emph{reducible vertices}), such that $d(v_i) \geq d(v_j)$ for $i>j$.
	Observe that if $V' = \varnothing$, then $0^n$ is already the desired search point and we are done by \cref{lemma::zeroSol}.
	For $i \in \{0,1,\dots,\ell\}$ let $X^i$ be a search point with $|X^i_1| = i$ and $\sum_{v \in X^i_1} d(v) = \sum_{j=1}^i d(v_j)$.
	In particular, $X^\ell$ is the desired search point according to the lemma.
	First observe that $X^i$ can only be dominated by a search point $X$ if $|X_1| \leq |X^i_1|$ and $-\sum_{v \in X_1} d(v) \leq - \sum_{v \in X^i_1} d(v)$,
	which is only possible if $|X_1| = i = |X^i_1|$ and $-\sum_{v \in X} d(v) = - \sum_{v \in X'_1} d(v)$ as $X^i_1$ contains only the vertices of largest degree.
	Consequently, $X^i \subseteq V'$ and once $X^i$ is in the population $\P$ the vector $(|X^i_1|, *, -\sum_{v \in X_1} d(v))$ is pareto optimal.
	
	Let $\P$ be a population with $0^n \in \P$ and let $s < \ell$ be the largest integer such that $X^s \in \P$.
	Note that $X^0 = 0^n$.
	Let $v \in u(X^s)$ be a vertex that satisfies $d(v) = \max_{u \in u(X^s)} d(u)$.
	By \cref{lemma::singleFlipAndBoundedPop} \algGlobalSemoAlt flips only a certain bit from a certain search point of $\P$ with probability $\Omega(1/n^3)$ and
	thus, mutating $X^s$ to a search point $X^{s+1}$ takes in expectation $\O(n^3)$ iterations.
	Using the method of fitness based partitions \cite{DBLP:conf/ppsn/Sudholt10} and summing up over the different values of $s$ leads to $\sum_{i=1}^{\ell} \O(n^3) \leq \sum_{i=1}^{\opt} \O(n^3) = \O(\opt \cdot n^3)$ expected number of iterations having $X^\ell$ in the population $\P$, once $0^n \in \P$.
	By \cref{lemma::zeroSol} the expected number of iterations such that $0^n$ is in the population $\P$ is $\O(n^3 \log n)$.
	As a result, we have $X^\ell \in \P$ after $\O(n^3(\opt + \log n))$ iterations in expectation. 
\end{proof}

\subsection*{Proof of \cref{thm::fitness1Opt}:}
\begin{proof}
First observe that once we have a search point $X$ according to \cref{lemma::reducedInstance_f1} in the population that this is pareto optimal, since there is no other search point with same or less number of selected vertices that yields to a smaller value of $-\sum_{v \in X_1} d(v)$.
On the other hand, by \cref{thm::degreeKernel} we know that the uncovered-objective $u(X)$ has size at most $q=\opt \cdot W(\opt+W)+\opt$
while $X_1$ contains only vertices that have to be in the optimal solution.
In expectation, $X$ is in the population after $\O(n^3(\opt + \log n))$ iterations of \algGlobalSemoAlt (cf.~\cref{lemma::reducedInstance_f1}).
Thus, given $X$ is selected in the Algorithm~\algGlobalSemoAlt we obtain a probability of $1/3 \cdot 2^{-q}$ flipping $m \leq \opt$ vertices of $u(X)$ that lead to an optimal solution. 
That is, if $X \in \P$ we have a probability of $\Omega\left(1/n^2 \cdot 2^{-q}\right)$ reaching the optimal solution in one iteration, where the additional factor $1/n^2$ comes from selecting $X$ (cf.~\cref{lemma::singleFlipAndBoundedPop}).
Consequently, the expected number of iterations reaching an optimal solution is upper bounded by $\O\left(n^3(\opt + \log n) + n^2 \cdot 2^q \right)$.
\end{proof}
	
	\section{Omitted proofs of Section~\ref{sec::f2}}\label{appendix::f2}

\subsection*{Proof of \cref{lemma::balancedExpansionStrictlyReducibleExist}}
For the proof we use a structural lemma provided by Casel et al.~\cite{DBLP:conf/esa/Casel0INZ21} for potential vertices of a strictly reducible pair with additional properties for the unincluded vertices.
For a better understanding, it is convenient to think by an assignment function $g \colon \B \times A \to \N_0$ on flows, which sends fractional vertex-parts of components $\B$ to $A$.
For $g$ we define $\overrightarrow{\B}_a :=\left\{C \in \B \mid g\left(C,a\right) > 0 \right\}$ for $a \in A$, $\overrightarrow{\B}_{A'} := \bigcup_{a \in A'} \overrightarrow{\B}_a$ for $A' \subseteq A$ and $\overrightarrow{\B}_U :=\left\{C \in \B \mid \sum_{a \in A} g\left(C,a\right) < |C| \right\}$.

\begin{definition}[fractional balanced expansion]
	\label{definition::fbe}
	Let $G=\left(A \cup B, E\right)$ be a graph, where for each component $C \in \B$ we have $|C| \leq W$. 
	For $q \in \N_0$, a partition $A_1\cup A_2$ of $A$ and $g \colon \B \times A \to \N_0$, the tuple $\left(A_1,A_2,g,q\right)$ is called \emph{fractional balanced expansion} if:
	\begin{enumerate}
		\item for all $C \in \B$ and $a \in A$, if $g(C,a) \ne 0$, then $a \in N(C)$,
		\item $\sum_{C \in \B} g\left(C,a\right)\, \begin{cases}\geq \ q, & a\in A_1\\ 
			\leq \ q, & a\in A_2\end{cases}$  
		\item $\forall C \in \B \colon \sum_{a \in A} g\left(C,a\right) \leq |C|$ 
		\item  $N\left(\overrightarrow{\B_U} \cup \overrightarrow{\B}_{A_1}\right) \subseteq A_1$
	\end{enumerate}
\end{definition}

The vertices $A_1$ in a fractional balanced extension correspond to the head vertices, and the question we are interested in is when they are nonempty. 

\begin{lemma}[\cite{DBLP:conf/esa/Casel0INZ21}, Lemma~4]
	\label{lemma::balancedExpansionReducibleExist}
	Let $G=\left(A \cup B, E\right)$ be a graph, where for each $C \in \B$ we have $|C| \leq W$ and no $C \in \B$ is isolated, i.e., $N(C) \neq \varnothing$.
	For every $q \in \N_0$, if $w(B) \geq q|A|$, then there exists a fractional balanced expansion $\left(A_1,A_2,g,q\right)$ with $A_1 \neq \varnothing$. 
\end{lemma}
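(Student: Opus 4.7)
The plan is to encode the fractional balanced expansion as a maximum-flow problem on an auxiliary bipartite network and to read $A_1$, $A_2$, and $g$ off from any maximum flow together with its residual structure. Build the network $H$ with source $s$, sink $t$, a node for each component $C \in \B$ and each $a \in A$, arcs $s \to C$ of capacity $|C|$ for every $C \in \B$, arcs $C \to a$ of infinite capacity whenever $a \in N(C)$, and arcs $a \to t$ of capacity $q$ for every $a \in A$. Let $f$ be an integral maximum $s$--$t$ flow and set $g(C,a) := f(C,a)$. Conditions (1), (3), and (4) of \cref{definition::fbe} then follow immediately from the capacity and support structure of the arcs.

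For the splitting of $A$ I would define $A_1$ as the set of vertices $a \in A$ that are reachable from $s$ in the residual graph $H_f$, and $A_2 := A \setminus A_1$. Condition (2) then follows from maximality of $f$: any $a \in A_1$ with $\sum_C g(C,a) < q$ would have an open residual arc $a \to t$, giving an augmenting $s$--$t$ path and contradicting optimality. Condition (4) is where the residual choice pays off: if $C \in \overrightarrow{\B}_U$ then the residual arc $s \to C$ is open and $C$ is reachable from $s$, so every $a \in N(C)$ is reached through the infinite-capacity forward arc $C \to a$ and lies in $A_1$; symmetrically, if $C \in \overrightarrow{\B}_{A_1}$ then $f(C, a') > 0$ for some $a' \in A_1$, so the backward residual arc $a' \to C$ is open, $C$ inherits reachability from $a'$, and the same forward arcs push all of $N(C)$ into $A_1$.

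The main obstacle is showing $A_1 \neq \varnothing$. If the hypothesis is strict, i.e.\ $w(B) > q|A|$, the flow value is at most $q|A| < w(B)$, so some component $C$ is not fully drained; then its residual arc from $s$ is open and, by the no-isolation assumption, some $a \in N(C)$ is residual-reachable and lies in $A_1$. The delicate case is the tight one, $w(B) = q|A|$, in which the max flow saturates every arc incident to $s$ and $t$ and so residual reachability from $s$ alone can leave $A_1$ empty. Here I would anchor an arbitrary $a_0 \in A$ (the statement is only nontrivial for $|A| \geq 1$) and redefine $A_1$ as the smallest subset of $A$ containing $a_0$ and closed under the rule ``$a \in A_1$ and $g(C,a) > 0 \Rightarrow N(C) \subseteq A_1$''. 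Since every $a$ receives exactly $q$ units in the tight case, conditions (2) and (3) remain automatic, the closure rule enforces (4) by construction, and $a_0 \in A_1$ ensures $A_1 \neq \varnothing$, yielding a fractional balanced expansion $(A_1, A_2, g, q)$ as required.
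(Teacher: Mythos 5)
You should first note that the paper itself contains no proof of this statement: it is imported verbatim from Casel et al.\ (\cite{DBLP:conf/esa/Casel0INZ21}, Lemma~4) and used as a black box in the proof of \cref{lemma::balancedExpansionStrictlyReducibleExist}. So there is no in-paper argument to compare against; your max-flow encoding is, however, the natural one and matches the $g$-embedded flow networks the paper builds in Section~\ref{appendix::f2}. Your verification of conditions (1) and (3) from the arc structure, the residual-reachability definition of $A_1$, and the derivation of (2) and (4) from maximality of the flow are all correct, as is the nonemptiness argument whenever some arc $s\to C$ is left unsaturated.

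The gap is in the case analysis for $A_1\neq\varnothing$. You assert that in the tight case $w(B)=q|A|$ ``the max flow saturates every arc incident to $s$ and $t$,'' but this is false: take $A=\{a_1,a_2\}$, $q=1$, and two singleton components $C_1,C_2$ of $\B$ both adjacent only to $a_1$. Then $w(B)=2=q|A|$ and no component is isolated, yet the maximum flow has value $1$, the vertex $a_2$ receives $0<q$ units, and $\overrightarrow{\B}_U\neq\varnothing$. In exactly this situation your closure construction anchored at an \emph{arbitrary} $a_0$ breaks: anchoring at $a_0=a_2$ gives $A_1=\{a_2\}$, which violates condition (2) (since $\sum_{C\in\B} g(C,a_2)=0<q$) and condition (4) (since $N(C_2)=\{a_1\}\not\subseteq\{a_2\}$). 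The correct dichotomy is not on whether the hypothesis is tight but on whether the maximum flow saturates every arc out of $s$. If some arc $s\to C$ is unsaturated, your residual argument already yields $A_1\neq\varnothing$ together with all four conditions, and this is what handles the example above. Only when all source arcs are saturated does the flow value equal $w(B)\geq q|A|$, which forces $w(B)=q|A|$, saturates every arc $a\to t$ (so every $a$ receives exactly $q$ units) and makes $\overrightarrow{\B}_U=\varnothing$; in that situation, and only there, your anchored-closure construction is valid. With the case split repaired in this way — using only ingredients already present in your write-up — the proof goes through.
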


Equipped with \cref{lemma::balancedExpansionReducibleExist} we can prove \cref{lemma::balancedExpansionStrictlyReducibleExist}, i.e., the existence of a strictly reducible pair with respect to the vertex-size. 

\paragraph{\textbf{\textbf{Proof of \cref{lemma::balancedExpansionStrictlyReducibleExist}:}}}
By the precondition of the lemma we obtain by \cref{lemma::balancedExpansionReducibleExist} the existence of a fractional balanced expansion $\left(A_1,A_2,g,2W-1\right)$ with $A_1 \neq \varnothing$ in $G$.
By the properties of the fractional balanced expansion we have $N\left(\overrightarrow{\B}_U \cup \overrightarrow{\B}_{A_1}\right) \subseteq A_1$
and $\sum_{b \in B} g\left(C,a\right) \geq 2W-1$ for all $a \in A_1$.
That is, $(A'=A_1, B' = \bigcup_{C \in \B_U \cup \overrightarrow{\B}_{A_1} } C)$ is a reducible pair.
Since $\sum_{C \in \B} g\left(C,a\right) \leq 2W-1$ for all $a \in A_2$ and no $C \in \B$ is isolated, we have
\begin{align*}
    |B| - \sum_{a \in A_2} \sum_{C \in \B} g(C,a) &\geq |A|(2W-1) + 1 - |A_2| (2W-1)\\
    &= |A_1|(2W-1)+1.
\end{align*}
Thus, either there is already an $a \in A_1 = A'$ with $\sum_{C \in  \B} g\left(C,a\right) \geq 2W$, or $\overrightarrow{\B_U} \neq \varnothing$ and we can assign one more unit of $\overrightarrow{\B}_U$ to a vertex of $A_1 = A'$ which has a component $C \in \overrightarrow{\B_U}$ in its neighborhood.
This in turn shows that $(A',B')$ is a strictly reducible pair.

\subsection*{Proof of \cref{lemma::firstStepOptLP}}
\begin{proof}
Let $\P$ be a population with $0^n \in \P$.
Let $s \geq 0$ be the largest integer that denotes a sample $X^s$ in $\P$ with $|X^s_1| = s$ and $\lpPrim(G) = \lpPrim(G[u(X^s)]) + |X^s_1|$.
Note that $X^0=0^n$.
By \cref{corollary::paretoOptLP} the tuple $(|X^s_1|,*,\lpPrim(u(X^s)))$ is pareto optima, i.e., $s$ can never decrease.
If there is an optimal fractional $W$-separator $\{y_v \in \R_{\geq 0}\}_{v \in u(X)}$ of $G[u(x)]$, such that $y_v<1$ for every $v \in u(X)$ then we are done.
Otherwise, there is a $v \in u(X)$ with $y_v=1$.
By \cref{lemma::onesInLPSameObjective} we obtain that $\lpPrim(G) = \lpPrim(G[u(X^s) \setminus \{v\}]) + |X^s_1 \cup \{v\}|$.
Thus, mutating $X^s$ by only flipping the according entry $v$ in $X^s$ results in a search point $X^{s+1}$.
By \cref{lemma::singleFlipAndBoundedPop} this event has probability $\Omega(1/n^3)$ in the algorithm \algGlobalSemoAlt, and happens at most $\opt$ times as $\lpPrim(G) \leq \opt$.
That is, $s \leq \opt$ and mutating $X^s$ to a search point $X^{s+1}$ happens after $\O(n^3)$ iterations in expectation.
Moreover, by \cref{lemma::zeroSol} the search point $0^n$ is in the population $\P$ after $\O(n^3 \log n)$ iterations in expectation. 
By using the method of fitness based partitions \cite{DBLP:conf/ppsn/Sudholt10}, the fact that a search point described in the lemma is not in $\P$ is upper bounded by $\O(n^3(\log n + \opt))$ iterations in expectation.
\end{proof}

\subsection*{Proof of \cref{lemma::ingredientParateo1}:}
As $|X_1| + \lpPrim(G[u(X)]) = \lpPrim(G)$ we obtain by \cref{lemma::RedPairLPNoSequence} that $X_1 \cap B_i = \varnothing$ for every $i \in [m]$.
Remains to show that $A_i \subseteq X_1$ for every $i \in [m]$.

Let $(A,B)$ be a strictly reducible pair.
We define $\B_{\Tilde{A}} \subseteq \B$ for $\Tilde{A} \subseteq A$ as the components $\{C \in \B \mid N(C) \cap \Tilde{A} \neq \varnothing\}$. 
Assume $A_j$ is a set with $A_j \subsetneq X_1$ for $j \in [m]$, where $A_i \subseteq X_1$ for $i \in [j-1]$.
Let $g$ be an assignment function like in \cref{def::RedPair} for $(A_j,B_j)$.
Since no vertex of $B_j$ is in $X_1$, the vertices $A_j' = A_j \setminus X_1$ along with $B_j' = \bigcup_{C \in \B_{A_j'}} C$ have to be in $u(X)$ as every $a \in A_j'$ satisfies $\sum_{C \in \B_{A_j'}} g(C,a) \geq 2W-1$.
Note that if $X_1 \cap B_j' = \varnothing$, then $\sum_{C \in \B_{A_j'}} g(C,a) \geq 2W-1$ for every $a \in A_j'$ implies that the every vertex $a$ is in a connected subgraph of size at least $W+1$ in $G-X_1$, which in turn implies that $A_j'$ and $B_{A_j'}$ are in $u(X)$.
Combining the fact $\bigcup_{i=1}^{j-1} A_i \subseteq X_1$ with \cref{lemma::CrownStaysCrown}  we obtain that there is a partition $\hat{A}_1, \dots, \hat{A}_p$ of $A_j'$ with disjoint vertex sets $\hat{B}_1, \dots, \hat{B}_p \subseteq \bigcup_{C \in \B_{A_j'}} C$, such that for each $\ell \in [p]$ the tuple $(\hat{A}_\ell,\hat{B}_\ell)$ is a minimal strictly reducible pair in $G[u(X)]$.
However, by \cref{lemma::onesInLPSameObjective} we have then $y_a=1$ for every $a \in A_j'$ contradicting the precondition of the lemma.

\subsection*{Proof of \cref{lemma::lowerBoundLP}}
\begin{proof}
	The fractional $W$-separator of $G[u(X)]$ with an objective value of $LP(G[u(X)])$ can be extended to a fractional $W$-separator of $G$ by the vertices $X_1$.
	Since this solution would be feasible for the linear program of the $W$-separator problem, we obtain the desired inequality.
\end{proof}

\subsection*{Proof of \cref{lemma::PackingSizeLargeEnough,lemma::decideWhichA,lemma::SizeNeoghborhood}}
To prove \cref{lemma::PackingSizeLargeEnough,lemma::decideWhichA,lemma::SizeNeoghborhood}, we recall some definitions and introduce new ones.
Let $(A,B)$ be a minimal strictly reducible pair with an assignment function $g$ (cf.~\cref{def::RedPair}).
We use $\B$ to denote the connected components of $G[B]$ as vertex sets.
Moreover, we define $\overrightarrow{\B}_a :=\left\{C \in \B \mid g\left(C,a\right) > 0 \right\}$ for $a \in A$, $\overrightarrow{\B}_{A'} := \bigcup_{a \in A'} \overrightarrow{\B}_a$ for $A' \subseteq A$ and $\overrightarrow{\B}_U :=\left\{C \in \B \mid \sum_{a \in A} g\left(C,a\right) < |C| \right\}$.
For a subset $\B' \subseteq \B$ we define $V(\B') := \bigcup_{C \in \B'} C$.
We abuse notation and use $N(\B')$ to denote $N(V(\B'))$.
For $A' \subseteq A$ we define $\B_{A'} := \{C \in \B \mid N(C) \subseteq A'\}$.

For the prove we also make use of directed graphs in the sense of flow networks, where we work with the well-known Ford-Fulkerson framework~\cite{ford1956maximal}.
We denote a network $H$ by $(V,\ora{E},c)$, where $V$ is its set of vertices, $\ora{E}$ is the set of arcs, and $c \colon \ora{E} \to \N$ its capacity function.
The main characteristic of a network is that its vertices contain $s$ and $t$, called the source and sink, respectively.
The source vertex is characterized by having only outgoing arcs, while the sink vertex has only incoming arcs.
We denote a flow by $Z$, i.e., $Z := \left\{z_{\ora{e}} \in \N_{\geq 0} \mid \ora{e} \in \ora{E}\right\}$.
A flow $Z$ is called feasible if $z_{\ora{e}} \leq c(\ora{e})$ for all $\ora{e} \in \ora{E}$ and if $\sum_{\ora{uv} \in  \ora{E}} z_{\ora{uv}} = \sum_{\ora{vw} \in  \ora{E}} z_{\ora{vw}}$ for all $v \in V \setminus \{s,t\}$.
Usually, the challenge is to find a feasible flow that sends as much flow as possible from $s$ to $t$, where the total flow value can be extracted from $\sum_{\ora{ut} \in \ora{E}} z_{\ora{ut}} = \sum_{\ora{sv} \in \ora{E}} z_{\ora{sv}}$.
The residual network $R = (V, \ora{E}')$ is defined with respect to a flow $Z$.
For our purposes, we can simplify its definition in comparison as it is usually known: for $z_{\ora{uv}} \in Z$ we have $\ora{uv} \in \ora{E}'$ if $c(\ora{uv}) - z_{\ora{uv}} > 0$ and $\ora{vu} \in \ora{E}'$ if $z_{\ora{uv}} > 0$.
An $s$-$t$-path in $R$ is called an augmenting path and gives the way how an $s$-$t$-flow with respect to $Z$ can be increased.  
It is a well-known result that if there is no $s$-$t$-path in $R$, then $Z$ is a maximum flow for $s$ and $t$. 

Mainly, we found new properties concerning minimal strictly reducible pairs which are useful for the proof of \cref{lemma::PackingSizeLargeEnough}.
One of them is that the particular vertex in $A$, which is guaranteed to have a larger mapping of fractional vertices from components in $\B$ over $g$, can be chosen arbitrarily.
To prove this, we construct two similar flow networks $H$ and $H_a$ for $a \in A$ with respect to $A, B$ and $g$, which we will call \emph{$g$-embedded flow networks}.
Basically, it describes the assignments from $\B$ to $A$ via $g$ with the advantage that we can make use of the properties provided by a flow network. 
For the proofs it is more convenient to reduce the assignments of each $a \in A$ that are given via $g$ arbitrary from $\oB_{a}$ so that we have $\sum_{C \in \B} g(C,a) = 2W-1$.
Now we describe the procedure getting the $g$-embedded flow network $H = \left(A \cup \B \cup \left\{s,t\right\}, \overrightarrow{E}, c\right)$.
We add the vertices $s,t$, $s$ as source and $t$ as sink, and arcs $\overrightarrow{E}$ with a capacity function $c\colon\overrightarrow{E} \to \mathbb{N}$ defined as follows:
connect every $C \in \B$ through an arc $\overrightarrow{sC}$ with capacity $|C|$ and connect every $a \in A$ through an arc $\overrightarrow{at}$ with capacity $2W-1$. 
Moreover, for every $C \in \B$ and $a \in A$ add an arc $\ora{Ca}$ to $\ora{E}$ if $a \in N(C)$ with capacity $|C|$.
Let $Z = \left\{z_e \in \N_0 \mid \ora{e} \in \ora{E} \right\}$ denote the flow values in $H$.
We embed now $g$ naturally in $H$.
For this, we set $z_{\ora{Ca}} = g(C,a)$ for every $a \in A$ and $C \in \B$.
To ensure now that the flow conversation is ensured for the vertices $V(H) \setminus \{s,t\}$ we set $z_{\ora{sC}} = \sum_{a \in A} g(C,a)$ for each $C \in \B$ and $z_{\ora{at}} = \sum_{C \in \B} g(C,a)$ for each $a \in A$.
It is not difficult to see that no arc capacity is violated and hence we have a feasible flow from $s$ to $t$ in $H$.
We denote by $F = |A|(2W-1)$ the total flow value $\sum_{a \in A} z_{\ora{at}}$.   
Note that we have $z_{\ora{at}} = 2W-1$ for each $a \in A$, which means that these arcs are \emph{saturated} as $c(\ora{at}) = 2W-1$ and therefore $Z$ is a maximum flow in $H$.

For $a \in A$ the flow network $H_a$ is basically the network $H$ with the difference that $c(\ora{at}) = 2W$.
We will accordingly denote the flow in $H_a$ by $Z^a = \left\{z^a_e \in \N_0 \mid \ora{e} \in \ora{E} \right\}$ and its flow value by $F_a$.
Note that this time we do not know, whether we have a maximum flow in hand as $\ora{at}$ is not saturated in $H_a$.

\begin{lemma}
	\label{lemma::flowEverywhere}
	Let $(A,B)$ be a minimal strictly reducible pair in $G$.
	Then, for every $a \in A$ the maximum flow from $s$ to $t$ in $H_a$ is $|A|(2W-1) + 1$.
\end{lemma}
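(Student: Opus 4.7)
The plan is to establish the claimed max flow value of $|A|(2W-1)+1$ by matching an upper and a lower bound. The upper bound is immediate from the total incoming capacity at $t$, namely $2W + (|A|-1)(2W-1) = |A|(2W-1)+1$. For the lower bound I would first normalize the given assignment $g$ so that $\sum_{C \in \B} g(C,a) = 2W-1$ for every $a \in A$. This is possible because strict reducibility guarantees slack at some $a^*$, and crucially the normalization leaves at least one unit of unused supply, so $\oB_U \neq \varnothing$. Embedding the normalized $g$ as a flow $Z$ in $H$ yields a maximum flow of value $|A|(2W-1)$, in which every arc $\ora{a't}$ is saturated. Since $\ora{at}$ carries one extra unit of capacity in $H_a$, it suffices to find an $s$-to-$a$ path in the residual network of $Z$ in $H$; such a path extends to an augmenting $s$-$t$ path in $H_a$ and increases the flow by one.

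To produce the augmenting path, I would argue by contradiction. Suppose $a$ is unreachable from $s$ in the residual network; let $S$ be the reachable set and split $A = A_S \cup A_{\bar S}$, $\B = \B_S \cup \B_{\bar S}$ accordingly. Then $s \in S$, $a \in A_{\bar S}$, and $t \notin S$ (otherwise $Z$ would not be maximum in $H$). The absence of residual arcs across the cut yields two structural facts. First, for every $C \in \B_S$, $N(C) \subseteq A_S$: otherwise any $a' \in N(C) \cap A_{\bar S}$ forces $g(C,a') = |C|$, which saturates $\ora{sC}$ and zeroes all other out-arcs of $C$, leaving no residual path from $s$ to $C$. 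Second, for every $C \in \B_{\bar S}$ and $a' \in A_S$, $g(C,a') = 0$; else the backward arc $\ora{a'C}$ would make $C$ reachable from $s$ via $a'$.

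To finish, I would construct a strictly reducible pair $(A_S, V(\B_S))$ with $A_S \subsetneq A$, contradicting the minimality of $(A,B)$. Pick any $C^* \in \oB_U$; the arc $\ora{sC^*}$ has residual, so $C^* \in \B_S$, and by the first structural fact $\varnothing \neq N(C^*) \subseteq A_S$, hence $A_S \subsetneq A$ (since $a \notin A_S$). Define $g'$ as $g$ restricted to $\B_S \times A_S$, increased by one unit at the pair $(C^*, a^{**})$ for some $a^{**} \in N(C^*)$. The conditions of \cref{def::RedPair} are routine to verify: the first structural fact gives $N(V(\B_S)) \subseteq A_S$; the second gives $\sum_{C \in \B_S} g(C,a) = \sum_{C \in \B} g(C,a) = 2W-1$ for every $a \in A_S$, so $\sum_{C \in \B_S} g'(C,a) \geq 2W-1$ and equals $2W$ at $a^{**}$, making the pair strictly reducible; finally, $\sum_a g'(C^*,a) \leq |C^*|$ because $C^* \in \oB_U$ had slack. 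The main obstacle will be establishing the two structural facts cleanly from the residual analysis; once they are in place, the smaller strictly reducible pair assembles directly.
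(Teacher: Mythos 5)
Your proof is correct in substance but follows a genuinely different route from the paper's. The paper fixes the normalized flow $Z$, partitions $A$ into the set $A_1$ of vertices $a$ for which $H_a$ admits flow value $F+1$ and its complement $A_2$, and derives a contradiction from $A_2 \neq \varnothing$ by a case analysis: either the components feeding $A_1$ have all their neighbours inside $A_1$, in which case they directly assemble into a smaller strictly reducible pair, or some such component touches a vertex $a_2 \in A_2$, and a unit of flow can be rerouted from an augmented flow $Z^{a_1}$ to show that $a_2$ in fact belongs to $A_1$. You instead fix a single $a$, assume it is unreachable from $s$ in the residual network of $Z$, and read the smaller strictly reducible pair directly off the reachable side of the resulting cut; your two ``structural facts'' are precisely the statements that no residual arc crosses the cut, and they hold for the reasons you give. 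This is the more standard max-flow/min-cut style of argument and avoids the paper's comparison of flows across the distinct networks $H_{a_1}$ and $H_{a_2}$; the paper's formulation, in exchange, produces the augmented flows $Z^{a}$ explicitly, which it reuses as assignment functions in the subsequent packing arguments (e.g.\ in the proof of \cref{lemma::PackingSizeLargeEnough}).

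One step needs tightening: you ``pick any $C^* \in \oB_U$'' and then assert $\varnothing \neq N(C^*)$, but \cref{def::RedPair} does not forbid isolated components of $G[B]$, and your first structural fact only yields $N(C^*) \subseteq A_S$, not nonemptiness; for an isolated $C^*$ there is no $a^{**}$ to receive the extra unit. The fix is immediate: choose $C^*$ to be a component whose assignment was decreased during the normalization at a vertex $a^*$ with $\sum_{C \in \B} g(C,a^*) \geq 2W$. Such a $C^*$ had $g(C^*,a^*) > 0$ before normalization, hence $a^* \in N(C^*)$; it lies in $\oB_U$ afterwards, and $\ora{sC^*}$ is unsaturated, so $C^* \in \B_S$ and, by your first structural fact, $a^* \in A_S$. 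With that choice the remaining verification goes through, including $A_S \neq \varnothing$, which the contradiction with minimality requires.
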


\begin{proof}
	Recall by the definition of a strictly reducible pair that there is a $g$ that satisfy already $\sum_{C \in \B} g(C,a) \geq 2W -1$ for each $a \in A$ and for an $a' \in A$ even $\sum_{C \in \B} g(C,a') \geq 2W$.
	Basically, we want to show that this particular element in $A$ can be chosen arbitrarily if $(A,B)$ is a minimal strictly reducible pair.
	If this is not possible, then we will prove that there is a smaller strictly reducible pair $(A',B')$ with $A' \subset A$ and $B' \subseteq B$ contradicting the minimality of $(A,B)$.
	
	Consider the $g$-embedded flow networks $H$ and $H_a$ for each $a \in A$.
	For each $a \in A$ we search for an augmenting path in the residual graph of $H_a$ with respect to $Z^a$ (in the beginning $Z^a = Z$) from $s$ to $t$, or in other words, we will check, whether a larger maximum $s$-$t$-flow $F=|A|(2W-1)$ as in $H$ is realizable in $H_a$. 
	Observe that an augmenting path in $H_a$ with respect to $Z^a$ has to run through $\ora{at}$. 
	Let $A_1$ and $A_2$ be a partition of $A$, where for $a \in A_1$ the networks $H_a$ allow a greater total flow value from $s$ to $t$ than $H$, i.e., $F_a = F+1 = |A|(2W-1) + 1$.
	Note that if we would have $F_a = F+1$ for all $a \in A$, i.e., $A_1=A$, then the lemma holds and we are done.
	Moreover, note that $A_1 \neq \varnothing$, since 
	before reducing the assignments of $g$ for the embedding in $H$ there is a particular vertex $a \in A$ with $\sum_{C \in \B} g(C,a) \geq 2W$.
	
	Assume $A_1 \neq A$.
	We prove in this case that there is a strictly reducible pair $(A_1,B_1)$ with $B_1 \subset B$, which would contradict the minimality of $(A,B)$.
	Let $C' \in \B'$ be the components in $\B$, where for an $a_1 \in A_1$ we have $z^{a_1}_{\ora{Ca}} > 0$ in $Z^{a_1}$.
	If $N(C') \cap A_2 = \varnothing$ for all $C' \in \B'$, then $(A_1, \bigcup_{C' \in \B'} C')$ is a strictly reducible pair and we are done, since $N(\B') \subseteq A_1$ and we can generate a corresponding assignment function $\B' \times A_1 \to \N_0$ through the flows $Z^{a_1}$ for an arbitrary $a_1 \in A_1$.
	Note that $N(B) \subseteq A$ by the definition of a strictly reducible pair, which in turn means that $\bigcup_{C' \in \B'} C'$ has no neighbor outside $A$.
	Thus, we can assume that there is a $C' \in \B'$ with $N(C') \cap A_2 \neq \varnothing$, let us say $a_2 \in A_2$ is connected to $C'$.
	If $\sum_{a \in A} z_{\ora{C'a}} = \sum_{a \in A} g(C',a) = 0$, then $a_2$ cannot be in $A_2$ as $F_{a_2}=F+1$ is easily realizable as there is a augmenting flow path from $s$ to $t$ via $s,C',a_2,t$ in $H_{a_2}$, and $a_2$ would be in $A_1$.
	Let $a_1 \in A_1$ be a vertex with $z_{\ora{C'a_1}}>0$.
	Observe that the initial flow values $z_{\ora{Ca_1}}>0$ for $C \in \B$ in the network $H_{a_1}$, i.e., before augmenting flow in $H_{a_1}$, can only be decreased if we use the corresponding residual arc $\ora{a_1C}$ in a residual $s$-$t$-path $P$.
	But this cannot happen as $\ora{a_1t}$ is an arc in $P$, which means a residual arc $\ora{a_1C}$ cannot be part of the path $P$.
	From this, we obtain that $z^{a_1}_{\ora{C'a_1}} \geq z_{\ora{C'a_1}}>0$.
	For the following step observe that $z^{a_1}_{\ora{a_1t}} = 2W$ and $z^{a_1}_{\ora{at}} = 2W-1$ for all $a \in A \setminus \{a_1\}$.
	With these facts in hand we can easily transform $Z^{a_1}$ to a flow in $H_{a_2}$ with a total flow value of $F+1$ by subtracting one unit flow from $z^{a_1}_{\ora{C'a_1}}$ and $z^{a_1}_{\ora{a_1t}}$ and adding one unit of flow to $z^{a_2}_{\ora{C'a_2}}$ and $z^{a_2}_{\ora{a_2t}}$.
	However, this contradicts that $a_2 \in A_2$.
\end{proof}

From \cref{lemma::flowEverywhere} we can easily derive the correctness of the \textbf{\cref{lemma::decideWhichA,lemma::SizeNeoghborhood}}, which describes the new properties we found for minimal strictly reducible pairs.

To prove \cref{lemma::PackingSizeLargeEnough}, we use the procedure from \cite{DBLP:conf/esa/Casel0INZ21,DBLP:conf/iwpec/KumarL16} on how to find a ($W+1$)-packing of size $|A|$ in $G[A \cup B]$.
We refine this procedure with respect to the missing vertices of $A \cup B$ using \cref{lemma::flowEverywhere} and the augmenting flow properties of network flows.

\paragraph{\textbf{Proof of \cref{lemma::PackingSizeLargeEnough}:}}
To prove the lemma we will careful pack the vertices in $G[A \cup B] - S$ into connected subgraphs of size $W+1$, such that we obtain a $(W+1)$-packing of size $|A|-|S|+1$.
Let $(A,B)$ be a minimal strictly reducible pair in $G$.
We define for an assignment function $g \colon \B \times A \to \N_0$ an edge-weighted graph $G_g := (A \cup \B, E', w: E' \to \N_0)$, where $E' := \{aC \in A \times \B \mid g(C,a) > 0\}$ and $w_{Ca} = g(C,a)$ for every $Ca \in E'$.
First we demonstrate how a \emph{cycle canceling process} in $G_g$ works without changing the assignments from $\B$ to $A$, i.e., maintaining $\sum_{C \in N\left(a\right)} w_{Ca} = \sum_{C \in \B} g(C,a)$ for all $a \in A$, while ensuring $\sum_{a \in N\left(C\right)} w_{Ca} = \sum_{a \in A} g(C,a) \leq |C|$ for all $C \in \B$.
After that we demonstrate how the \emph{packing process} works to find a $(W+1)$-packing of size $|A|$ in $G[A \cup B]$.
Both processes was demonstrated identically already in~\cite{DBLP:conf/esa/Casel0INZ21,DBLP:conf/iwpec/KumarL16}.
We repeat the explanation of them because they are elementary to understand how we find our desired packing in $G[A \cup B]-S$ of size at least $|A|-|S|+1$.

Let $g$ be the assignment function of the minimal strictly reducible pair $(A,B)$.
Suppose there exists a cycle $Q \subseteq G_g$.
We pick an edge $Ca \in E'(Q)$ with minimum weight $\min_{Ca \in E\left(Q\right)} w_{Ca} = x$.
We remove $Q$ from $G_g$ as follows.
First, we reduce the weight of $w_{Ca}$ by $x$.
Then, traversing the cycle $Q$ from $C$ in direction of its neighbor that is not $a$, we alternate between increasing and decreasing the weight of the visited edge by $x$ until we reach $a$.
In the end we obtain at least one zero weight edge, namely $Ca$, and we remove all edges of weight zero from $G_g$. 
Since $G_g$ is bipartite, the number of edges in the cycle is even.
Thus, we will maintain the weight $\sum_{C \in N\left(a\right)} w_{Ca} = \sum_{C \in \B} g(C,a)$ for all $a \in A$ during this process.
Furthermore, we transfer weight on an edge incident to a $C \in \B$ to another edge, which in turn is also incident to this vertex $C$.
As a result, the condition $\sum_{a \in N\left(C\right)} w_{Ca} = \sum_{a \in A} g(C,a) \leq |C|$ for all $C \in \B$ is still satisfied.
We repeat this process until $G_g$ becomes a forest.
Moreover, if $w_{Ca} > 0$, then $a \in N(C)$ for every $C \in \B$ and $a \in A$, since $g(C,a) > 0$ leads to the same and we add no new edge to $G_g$. 

Next, we give the packing process, i.e., we show how to find the $(W+1)$-packing in $G[A \cup B]$ of size $|A|$ 
We transform accordingly the resulting weights back to a new assignment function $g' \colon \B \times A \to \N_0$ by setting $g'(C,a) = w_{Ca}$.
Note that $g'$ satisfies the condition of \cref{def::RedPair}, and that $G_{g'}$ is a forest.
Let $T$ be a tree in $G_{g'}$ rooted at an $a_r \in A$.
We have $\sum_{C \in \B} g'(C,a) \geq 2W-1$ for every $a \in A_1$ in the tree.
Since $G_{g'}$ is a bipartite graph, every child of $a \in A_1$ is in $C \in \B$ and vice versa.
Let $\B_T$ and $A_T$ be the vertices in $T$ of $\B$ and $A$, respectively.
We construct now the desired packing by mapping the elements of $\B_T$ to $A_T$.
Let $f \colon \B \to A$ be a function that describes for each $C \in \B_T$ to which $a \in A$ it belongs to. 
We set $f(C) = a$ for every $a \in A_T$ if $C \in \B_T$ is a child vertex of $a$.
Note that $a \in N(C)$ if $f(C) = a$.
Thus, for $a_r$ we obtain $\left|\bigcup_{C \in f^{-1}(a_r)} C \right| = \sum_{C \in \B} g'(C,a_r) \geq 2W-1$.
For all other vertices $a' \in A_T$ that are in the tree, we only lose its father vertex $C'$, which in turn has also a father $a'' \in A_T \setminus \{a\}$.
That is, $g'(C',a'') \geq 1$.
It follows that $g'(C',a') \leq |C| - 1 \leq W - 1$ for all those vertices and we obtain $\left|\bigcup_{C \in f^{-1}(a)} C \right| \geq \sum_{C \in \B} g'(C,a) - (W-1) \geq (2W-1)-(W-1) = W$ for all $a \in A_T \setminus \{a_r\}$.
We execute this process with all trees in $G_g$ and get our desired packing $\Q$ by setting each $Q_a = \{a\} \cup f^{-1}(a)$ for $a \in A$ as an element of it.

We know how to find a $(W+1)$-packing $\Q$ of size $|A|$ in $G[A \cup B]$ yet.
As a reminder, the properties for each $Q \in \Q$ are that $|Q| \geq W+1$ and $G[Q]$ is connected (\emph{pack properties}), and that the elements in $\Q$ are pairwise disjoint.
Our goal is now to find a $(W+1)$-packing in $G[A \cup B]-S$ of size at least $|A|-|S|+1$.
Observe by the disjointedness of $\Q$ that after removing $S$ from $\Q$ at most $|S|$ elements in $\Q$ can lose its pack properties and if this is exactly $|S|$, then every $s \in S$ has an exclusive $Q \in \Q$ where it is contained.
Otherwise, either we have ~$|S \cap Q| \geq 2$ for some $Q \in \Q$, or there is an $s \in S$ that has no intersection with an element in $\Q$. Consequently, the elements in $\Q$ containing no vertex from $S$ are at least $|\Q| - |S| + 1 = |A|-|S|+1$ many and form the desired packing satisfying the lemma.

We can assume that every $s \in S$ is contained in exactly one $Q \in \Q$.
Let $S_A = A \cap S$ and $S_B = B \cap S$.
Note by the precondition of the lemma that $S_B \neq \varnothing$ and $S_A \neq A$.
For $s \in S_B$ let $\B(s)$ denote the component in $\B$ where $s$ is contained.
Since every $s \in S$ is contained in a $Q \in \Q$, we can assume for $s \in S_B$ that $\B(s)$ occurs in one of the trees of $G_{g'}$ and therefore $\sum_{a \in A} g'(\B(s),a) > 0$ holds.
To get now the desired ($W+1$)-packing we use the properties of the root node $a_r$ while constructing $\Q$.
Recall that we have $\left|\bigcup_{C \in f^{-1}(a_r)} C \right| = \sum_{C \in \B} g'(C,a_r) \geq 2W-1$ and that we can fix a root node arbitrary from $A$.
Let $a' \in A$ be a vertex with $g'(\B(s'),a) > 0$, which exists as $\sum_{a \in A} g'(\B(s'),a) > 0$.
Let $\Q'$ be the $(W+1)$-packing after the packing process with $f' \colon \B \to A$ as the corresponding assignment where $a'$ is the root in one of the trees of $G_{g'}$.
We can assume that $a' \notin S$ and only $\B(s')$ contains a vertex of $S$ among the child vertices, as  otherwise, we have after the packing process an element $Q \in \Q'$ with $|S \cap Q| \geq 2$ and we are able to find the desired ($W+1$)-packing in $G[A \cup B]-S$. 
If $g'(\B(s'),a) < W$, we obtain $\left|\bigcup_{C \in f'^{-1}(a') \setminus \{\B(s')\} } C  \right|  = \sum_{C \in \B \setminus \{\B(s')\}} g'(C,a_r) \geq 2W-1-(W-1)=W$.
This in turn means that $\left( \{a'\} \cup \bigcup_{C \in f'^{-1}(a')} C \right) \setminus S$ satisfies the pack properties and we are able to construct the desired $(W+1)$-packing, because at most $|S|-1$ elements in $\Q'$ after removing the vertices of $S$ in $\Q'$ can violate the pack properties.

Thus, we can assume that $g'(\B(s'),a') = W$.
If $\sum_{C \in \B} g'(C,a') \geq 2W$, then we are done by the explanations above, since we have $\left|\bigcup_{C \in f'^{-1}(a') \setminus \{\B(s')\} } C  \right|  = \sum_{C \in \B \setminus \{\B(s')\}} g'(C,a_r) \geq 2W-W=W$.
If not, we create a valid assignment function $g''$ like in \cref{def::RedPair} with $\sum_{C \in \B} g''(C,a') \geq 2W$, where $G_{g''}$ is as forest and $\B(s')$ is adjacent to $a'$ in $G_{g''}$. 
For this, we construct a $g'$-embedded flow network $H_{a'}$.
By \cref{lemma::flowEverywhere} we know that there is an augmenting path from $s$ to $t$ and by the construction of $H_{a'}$ that has to cross the residual arc $\ora{a't}$.
This augmenting path $P$ cannot decrease the initial flow values $z^{a'}_{\ora{Ca'}}$ as an augmenting path need to use a residual arc $\ora{a'C}$ to do this, which would contradict that $P$ is a path as it has to use the arc $\ora{a't}$.
In particular, after extracting the resulting assignment function $\hat{g}$ from $Z^{a'}$, i.e., setting $\hat{g}(C,a) = z^{a'}_{\ora{Ca}}$ for all $a \in A$ and $C \in \B$, we have $\hat{g}(\B(s'),a') = W$.
Observe that due to $\sum_{C \in \B} \hat{g}(C,a') \leq |W|$ the vertex $\B(s')$ has degree one in $G_{\hat{g}}$.
That is, if we execute the cycle canceling process in $G_{\hat{g}}$ the vertex $\B(s')$ cannot be in any cycle of $G_{\hat{g}}$ during this process.
As a result, after extracting the resulting assignment function after the cycle canceling process this corresponds to the desired $g''$ where $\B(s')$ is adjacent to $a'$ in $G_{g''}$.

\subsection*{Proof of \cref{lemma::RedPairLPNoSequence}}

To prove \cref{lemma::RedPairLPNoSequence} we use the following property concerning LP's for the $W$-separator problem.

\begin{lemma}
	\label{lemma::partitionLP}
	Let $G=(V,E)$ be a graph and let $V_1,V_2$ be a partition of $V$.
	For every $W \in \N$ of the $W$-separator problem we have $\lpPrim(G) \geq \lpPrim(G[V_1]) + \lpPrim(G[V_2])$.
\end{lemma}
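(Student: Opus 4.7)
The plan is to take an optimal fractional $W$-separator of $G$ and show that its restrictions to $V_1$ and $V_2$ are each feasible fractional $W$-separators of $G[V_1]$ and $G[V_2]$ respectively. Summing the LP-objectives of these restrictions yields the LP-objective of the full solution on $G$, which gives the desired inequality.

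More concretely, I would start from an optimal solution $\{y_v \in \R_{\geq 0}\}_{v \in V}$ of the $W$-separator LP on $G$, so that $\sum_{v \in V} y_v = \lpPrim(G)$. For $i \in \{1,2\}$ define $Y^i := \{y_v\}_{v \in V_i}$. The key observation is that every connected subgraph $S$ of $G[V_i]$ with $|S| = W+1$ is, by definition of an induced subgraph, also a connected subgraph of $G$ with $|S| = W+1$. Hence the LP constraint $\sum_{v \in S} y_v \geq 1$ used in $G$ already witnesses the corresponding constraint for $G[V_i]$, and together with $y_v \geq 0$ this shows that $Y^i$ is feasible for the $W$-separator LP on $G[V_i]$.

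Feasibility of $Y^i$ yields $\lpPrim(G[V_i]) \leq \sum_{v \in V_i} y_v$ for $i \in \{1,2\}$. Since $V_1, V_2$ partition $V$, adding these two inequalities gives
\[
\lpPrim(G[V_1]) + \lpPrim(G[V_2]) \leq \sum_{v \in V_1} y_v + \sum_{v \in V_2} y_v = \sum_{v \in V} y_v = \lpPrim(G),
\]
which is exactly the claim of the lemma.

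The proof is essentially bookkeeping and I do not anticipate any real obstacle; the only subtlety worth stating explicitly is that passing from $G$ to an induced subgraph $G[V_i]$ only drops constraints from the LP (those coming from connected $(W+1)$-vertex subgraphs that use vertices from both $V_1$ and $V_2$), which is why restriction preserves feasibility but cannot strictly reduce the LP value below the sum. Note also that the argument does not use anything specific about $W$-separators beyond the constraints being indexed by connected subgraphs of a fixed size, so the same argument extends to any LP whose constraints survive restriction to induced subgraphs.
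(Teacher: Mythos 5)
Your proposal is correct and follows essentially the same route as the paper: restrict an optimal fractional $W$-separator of $G$ to $V_1$ and $V_2$, observe that every connected $(W+1)$-vertex subgraph of $G[V_i]$ is also one of $G$ so the restrictions remain feasible, and sum the two bounds. The paper's proof is just a terser version of the same argument.
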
    
\begin{proof}
	Let $\{y_v\}_{v \in V}$ be a fractional $W$-separator of $G$.
	For every connected subgraph $G'$ of size $W+1$ in $G[V_1]$ or $G[V_2]$ we have $\sum_{v \in V(G')} y_v \geq 1$. 
	Consequently, $\{y_v\}_{v \in V_1}$ and $\{y_v\}_{v \in V_2}$ are fractional $W$-separators of $G[V_1]$ and $G[V_2]$, which shows that the desired inequality holds.
\end{proof}

First, we give a relation of the LP-objective to a minimal strictly reducible pair $(A,B)$ in case of $B \cap X_1 \neq \varnothing$, which we will use to prove the main statement of this section.

\begin{lemma}
	\label{lemma::RedPairLPNoB}
	Let $(A,B)$ be an minimal strictly reducible pair in $G$ and let $X_1 \in \{0,1\}^n$ be a search point.
	If $X_1 \cap B \neq \varnothing$, then $\lpPrim(G(u[X])) + |X_1| > \lpPrim(G)$.
\end{lemma}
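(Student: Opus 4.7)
The plan is to produce a strict gap between $\lpPrim(G)$ and $|X_1| + \lpPrim(G[u(X)])$, with the extra $+1$ coming from the enlarged packing of \cref{lemma::PackingSizeLargeEnough} once $X_1$ meets $B$. Throughout, set $V' := V \setminus (A \cup B)$, $S_{AB} := X_1 \cap (A \cup B)$ and $S_{V'} := X_1 \cap V'$, so that $|X_1| = |S_{AB}| + |S_{V'}|$ and by assumption $S_{AB} \cap B \neq \varnothing$.

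The first step is to pin down the identity $\lpPrim(G) = |A| + \lpPrim(G[V'])$. The $\geq$ direction is \cref{lemma::partitionLP} combined with $\lpPrim(G[A \cup B]) = |A|$: the lower bound on $\lpPrim(G[A \cup B])$ follows from the size-$|A|$ packing of \cref{lemma::packingReduciblePair}, while the matching upper bound holds because $A$ itself is a feasible integral separator of $G[A \cup B]$. For the reverse inequality, glue any optimal LP of $G[V']$ to the $0/1$ solution of $G[A \cup B]$ that assigns $y_a = 1$ for $a \in A$ and $y_b = 0$ for $b \in B$ (which is optimal by \cref{lemma::onesInLPReduciblePair} applied inside $G[A\cup B]$); feasibility uses $N(B) \subseteq A$, so every $(W+1)$-connected subset of $G$ spanning both sides of the partition must contain an $A$-vertex and is therefore covered with weight at least $1$.

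Next I would apply \cref{lemma::lowerBoundLP} inside $G[V']$ with the selection $S_{V'}$, obtaining $\lpPrim(G[V']) \leq |S_{V'}| + \lpPrim(G[u_{V'}])$, where $u_{V'}$ denotes the set of vertices in components of size at least $W+1$ in $G[V'] - S_{V'}$. Since $G - X_1$ can only enlarge such components by reattaching through $A \setminus X_1$, we get $u_{V'} \subseteq u(X) \cap V'$. Restricting an LP solution of a larger induced subgraph to a smaller one gives monotonicity of $\lpPrim$, so $\lpPrim(G[u_{V'}]) \leq \lpPrim(G[u(X) \cap V'])$. Rearranging yields the key bookkeeping inequality $\lpPrim(G[u(X) \cap V']) \geq \lpPrim(G[V']) - |S_{V'}|$.

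The final step splits on the size of $S_{AB}$. If $|S_{AB}| \leq |A|$, \cref{lemma::PackingSizeLargeEnough} applies (its hypothesis $S_{AB} \cap B \neq \varnothing$ is given) and produces a $(W+1)$-packing of size $|A| - |S_{AB}| + 1$ inside $G[A \cup B] - S_{AB}$; each packing element is a connected set of size at least $W+1$ disjoint from $X_1$, so it lies inside one component of $G - X_1$ of size at least $W+1$ and hence inside $u(X) \cap (A \cup B)$, giving $\lpPrim(G[u(X) \cap (A \cup B)]) \geq |A| - |S_{AB}| + 1$. Combining this with the previous step through \cref{lemma::partitionLP} applied to the partition $u(X) = (u(X) \cap (A \cup B)) \cup (u(X) \cap V')$, the terms $|S_{AB}|$ and $|S_{V'}|$ telescope to yield $|X_1| + \lpPrim(G[u(X)]) \geq |A| + 1 + \lpPrim(G[V']) = \lpPrim(G) + 1$. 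The complementary case $|S_{AB}| > |A|$ already gives $|X_1| \geq |A| + 1$, and chaining with the previous step produces the same conclusion. The main obstacle is the exact identity $\lpPrim(G) = |A| + \lpPrim(G[V'])$: without pinning $\lpPrim(G[A \cup B])$ to $|A|$ (not merely bounding it from below) the extra unit produced by \cref{lemma::PackingSizeLargeEnough} cannot be converted into a strict gap against $\lpPrim(G)$.
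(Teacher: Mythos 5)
Your proposal is correct and follows essentially the same route as the paper's proof: both split $V$ into $A \cup B$ and its complement via \cref{lemma::partitionLP}, absorb the outside part with \cref{lemma::lowerBoundLP}, and extract the strict extra unit from the enlarged $(W+1)$-packing of \cref{lemma::PackingSizeLargeEnough}, with the same case split on whether $|X_1 \cap (A\cup B)|$ exceeds $|A|$. The only difference is cosmetic: you establish the identity $\lpPrim(G) = |A| + \lpPrim(G[V\setminus(A\cup B)])$ by an explicit two-sided packing-plus-gluing argument, whereas the paper cites \cref{lemma::onesInLPReduciblePair,lemma::onesInLPSameObjective}.
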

\begin{proof}
	Let $\oAB = V \setminus (\AB)$, $X_1^{\AB} = X_1 \cap (\AB)$ and $X_1^{\oAB} = X_1 \cap \oAB$.
	Note $\AB$ and $\oAB$ are a partition of $V$ as well as $X_1^{\AB}$ and $X_1^{\oAB}$ a partition of $X_1$.
	And recall that for every vertex bipartition $V_1,V_2$ of a graph $G'$ that $\lpPrim(G') \geq \lpPrim(G'[V_1]) + \lpPrim(G'[V_2])$ (cf.~\cref{lemma::RedPairLPNoB}).
	Moreover, observe that $\lpPrim(G[u(X)]) = \lpPrim(G - X_1)$.
	Thus, we have 
	\begin{align*}
		\lpPrim(G[u(X)]) + |X_1|
		&=\lpPrim(G-X_1) + |X_1|\\
		&\geq \lpPrim(G[\oAB] - X_1^{\oAB}) + \lpPrim(G[\AB] - X_1^{\AB}) + |X_1|\\
		&= \lpPrim(G[\oAB] - X_1^{\oAB}) + |X_1^{\oAB}| + \lpPrim(G[\AB] - X_1^{\AB})\\ 
        & \hspace{0.25cm} + |X_1^{\AB}|\\
		&\geq  \lpPrim(G[\oAB]) + \lpPrim(G[\AB] - X_1^{\AB}) + |X_1^{\AB}|,
	\end{align*}
	where the last inequality follows by \cref{lemma::lowerBoundLP}.
	By \cref{lemma::onesInLPReduciblePair} and \cref{lemma::onesInLPSameObjective} we obtain that $\lpPrim(G) = \lpPrim(G-A) + |A|$.
	Note that $\lpPrim(G-A) =  \lpPrim(G[\oAB])$ by the separation properties of $A$ and hence, if we can show that $\lpPrim(G[\AB] - X_1^{\AB}) + |X_1^{\AB}| > |A|$ we are done.
	Clearly, if $|X_1^{\AB}| > |A|$ there is nothing to prove.
	Assume $|X_1^{\AB}| \leq |A|$.
	Let $\{y_v \in \R_{\geq 0}\}_{v \in V}$ be an optimal fractional $W$-separator of $G[\AB] - X_1^{\AB}$.
	Because $B \cap X_1^{\AB} \neq \varnothing$ and $|X_1^{\AB}| \leq |A|$ we know by \cref{lemma::PackingSizeLargeEnough} that $G[\AB] - X_1^{\AB}$ contains a ($W+1$)-packing $\Q$ of size $|A| - |X_1^{\AB}| + 1$.
	This in turn means that $\lpPrim(G[\AB] - X_1^{\AB}) \geq |A| - |X_1^{\AB}| + 1$, since the elements in $\Q$ are pairwise disjoint and for each $Q \in \Q$ we have $\sum_{v \in Q} y_v \geq 1$ for any feasible fractional $W$-separator.
	In summary, we obtain
	\begin{align*}
		\lpPrim (G[u(X)]) + |X_1|  
		&\geq  \lpPrim(G[\oAB]) + \lpPrim(G[\AB] - X_1^{\AB}) + |X_1^{\AB}|\\ 
		&= \lpPrim(G - A) + \lpPrim(G[\AB] - X_1^{\AB}) + |X_1^{\AB}|\\
		&\geq \lpPrim(G - A) + |A| - |X_1^{\AB}| + 1 + |X_1^{\AB}|\\
		&= \lpPrim(G - A) + |A| + 1\\
		&= \lpPrim(G) + 1\\ 
		&> \lpPrim(G).
	\end{align*}
\end{proof}

We are ready to prove \cref{lemma::RedPairLPNoSequence}, which provides a relation of the LP-objective to a sequence of minimal strictly reducible pairs.

\paragraph{\textbf{Proof of \cref{lemma::RedPairLPNoSequence}}}
	Let $A^i := \bigcup_{j=1}^i A_j$.
	By \cref{lemma::onesInLPReduciblePair} and \ref{lemma::onesInLPSameObjective} we have $\lpPrim(G - A^i) = \lpPrim(G - A^{i-1}) + |A_i|$ for every $i \in [m]$.
	Starting with 
	$\lpPrim(G) = \lpPrim(G - A^1) + |A_1|$ and stepwise substituting $\lpPrim(G - A^{i})$ for $i \in [m]$ yields 
	$$\lpPrim(G) = \lpPrim(G - A^m) + |A_1| + \dots + |A_m|.$$
	Thus, if $X_1 = \bigcup_{i=1}^m A_i$, then $\lpPrim(G) = \lpPrim(G[u(X)]) + |X_1|$ proving (\ref{enum::1}).
	
	Suppose $X_1 \cap B_\ell \neq \varnothing$.
	Observe that $|A_i| = \lpPrim(G[A_i \cup B_i])$ by \cref{lemma::onesInLPReduciblePair} and $\lpPrim(G[A_i \cup B_i] - A_i) = 0$.
	Hence, we obtain
	$$\lpPrim(G) = \lpPrim(G - A^m) + \lpPrim(G[A_1 \cup B_1]) + \dots + \lpPrim(G[A_m \cup B_m]).$$
	That is, to guarantee $\lpPrim(G(u[X])) + |X_1| = \lpPrim(G)$ we must have for every $i \in [m]$ that $\lpPrim(G[A_i \cup B_i] - X_1) + |X_1 \cap (A_i \cup B_i)| = \lpPrim(G[A_i \cup B_i])$.
	Note that to compensate $\lpPrim(G[A_i \cup B_i] - X_1) + |X_1 \cap (A_i \cup B_i)| > \lpPrim(G[A_i \cup B_i])$ we need for a $j \in [m] \setminus \{i\}$ that $\lpPrim(G[A_j \cup B_j] - X_1) + |X_1 \cap (A_j \cup B_j)| < \lpPrim(G[A_j \cup B_j])$, however, this is not possible as $\lpPrim(G[A_p \cup B_p] - X_1) + |X_1 \cap (A_p \cup B_p)| \geq \lpPrim(G[A_p \cup B_p])$ for all $p \in [m]$ by \cref{lemma::lowerBoundLP}.
	Finally, we have $\lpPrim(G[A_\ell \cup B_\ell] - X_1) + |X_1 \cap (A_\ell \cup B_\ell)| > \lpPrim(G[A_\ell \cup B_\ell])$ by \cref{lemma::RedPairLPNoB}, which proves (\ref{enum::2}).

\subsection*{Proof of \cref{lemma::CrownStaysCrown}}
\begin{proof}
    We define $V(\B_{A'}) \subseteq B$ for $A' \subseteq A$ as the vertices in the components $\{C \in \B \mid N(C) \cap A' \neq \varnothing\}$. 
	After removing $\hat{A}$ from $A$ and $V(\B_{\hat{A}})$ from $B$ we may derive from \cref{lemma::decideWhichA} that $\left(A \setminus \hat{A},B \setminus V(\B_{\hat{A}})\right)$ is a strictly reducible pair in $G-\hat{A}$.
	This in turn means that there exists a nonempty minimal strictly reducible pair with $A_1 \subseteq (A \setminus \hat{A})$ and $B_1 \subseteq B \setminus V(\B_{\hat{A}})$ in $G-\hat{A}$.
	Since $N(B_1) \subseteq A_1$, we can use similar arguments for $\hat{A}' = \hat{A} \cup A_1$ and $V(\B_{\hat{A}'})$ as for $\hat{A}$ and $V(\B_{\hat{A}})$ to obtain that there exists a minimal strictly reducible pair $(A_2,B_2)$ with $A_2 \subseteq A \setminus \hat{A}'$ and $B_2 \subseteq B \setminus V(\B_{\hat{A}'})$.
	Thus, we can continue this process until $\hat{A} \cup A_1 \cup A_2 \dots $ is equal to $A$.
\end{proof}

\subsection*{Proof of \cref{lemma::XReducedInPop}}
\begin{proof}
	Let $\P$ be a population with respect to $f_2$ in the algorithm \algGlobalSemoAlt.
	By \cref{lemma::firstStepOptLP} we have in expectation a solution $X$ in $\P$ such that $\lpPrim(G[u(X)]) + |X_1| = LP(G)$ and there is an optimal fractional $W$-separator $\{y_v \in \R_{\geq 0}\}_{v \in u(X)}$ of $G[u(X)]$ with $y_v<1$ for every $v \in u(X)$ after $\O(n^3(\log n + \opt))$ iterations.
	
	By \cref{lemma::ingredientParateo1} we have $A_i \subseteq X_1$.
	Observe that $|X_1| \leq \opt$ as $\lpPrim(G[u(X)]) + |X_1| = LP(G)$.
	The algorithm \algGlobalSemoAlt calls with $1/3$ probability the mutation that flips every vertex $X_1$ with $1/2$ probability in $X$.
	That is, reaching a state $X'$ from $X$ with $X'_1 = \bigcup_{i=1}^m A_i$ has probability $\Omega(2^{-\opt})$ and selecting $X$ in $\P$ has probability $\Omega(1/n^2)$.
	From this, in expectation it takes $\O\left(2^{\opt} n^2\right)$ iterations reaching $X'$ from $X$.
\end{proof}
	
	\section{Omitted proofs of Section~\ref{sec::apx}}\label{appendix::apx}

\subsection*{Proof of \cref{lemma::apxIsSave}}

To prove \cref{lemma::apxIsSave} we make use of an optimal fractional $W$-separator.
Through such a solution it is possible to find at least one vertex that can be added to a potential $W$-separator with the desired objective value $c>\opt$. 

\begin{lemma}
	\label{lemma::oneDividedByW}
	Let $X \in \{0,1\}^n$ be a search point.
	If $u(X) \neq \varnothing$, then any fractional $W$-separator of $G[u(X)]$ contains a vertex variable with value at least $1/(W+1)$.
\end{lemma}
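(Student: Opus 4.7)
The plan is to combine the definition of $u(X)$ with a single LP constraint and a pigeonhole argument. Recall that $u(X)$ collects precisely those vertices that lie in connected components of $G - X_1$ of size at least $W+1$. So whenever $u(X) \neq \varnothing$, the induced subgraph $G[u(X)]$ has at least one connected component $C$ with $|C| \geq W+1$.

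First I would extract from $C$ a connected vertex subset $S \subseteq C$ of size exactly $W+1$. This is a standard operation: root a spanning tree of $G[C]$ at an arbitrary vertex, perform a BFS/DFS, and take the first $W+1$ vertices encountered; the resulting set induces a connected subgraph of $G[u(X)]$ by construction. Such an $S$ is therefore a set of $W+1$ vertices of $G[u(X)]$ with $G[S]$ connected, and it is one of the constraints appearing in the $W$-separator LP for the instance $G[u(X)]$.

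Next, I would invoke feasibility. Any fractional $W$-separator $\{y_v\}_{v \in u(X)}$ of $G[u(X)]$ must satisfy
\[
\sum_{v \in S} y_v \geq 1.
\]
Since $|S| = W+1$, the pigeonhole principle yields some $v^* \in S$ with $y_{v^*} \geq 1/(W+1)$, which is exactly the desired conclusion.

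There is no real obstacle here; the only subtle point is the existence of a \emph{connected} subgraph of size exactly $W+1$ inside any connected graph of size at least $W+1$, which is immediate via a spanning-tree truncation as above. Note also that the statement holds for every feasible fractional $W$-separator (not just optimal ones), since feasibility alone is used in the pigeonhole step.
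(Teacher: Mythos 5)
Your proof is correct and follows essentially the same route as the paper: both extract a connected subgraph on exactly $W+1$ vertices from a large component of $G[u(X)]$, then apply the corresponding LP constraint and a pigeonhole/averaging step. The only difference is presentational — you spell out the spanning-tree truncation and argue directly, while the paper phrases the averaging step as a contradiction.
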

\begin{proof}
	Let $Y = \{y_v \in \R_{\geq 0}\}_{v \in u(X)}$ be a fractional $W$-separator of $G[u(X)]$.
	Since $u(X) \neq \varnothing$, there is a connected subgraph $G'$ in $G[u(X)]$ with $W+1$ vertices.
	If every $y_v < 1/(W+1)$ for $v \in V(G')$, then $\sum_{v \in V(G')} y_v < 1$, which contradicts the feasibility of $Y$.
\end{proof}  

With \cref{lemma::oneDividedByW} in hand we characterize 0-bit vertices that can be added to potential $W$-separators without affecting the desired target value $c> \opt$.

\paragraph{\textbf{Proof of  \cref{lemma::apxIsSave}:}}
W.l.o.g.~let $X$ be a search point in $\P$ that minimizes $\lpPrim(G[u(X)])$ such that $|X_1| + (W+1) \cdot \lpPrim(G[u(X)]) \leq c$ (\emph{$c$-inequality}). 
If $u(X) = \varnothing$, then $X$ is a feasible $W$-separator with $\lpPrim(G[u(X)]) = 0$ and hence $|X_1| \leq c$.
Otherwise, by \cref{lemma::oneDividedByW} we have a $v \in u(X)$ with $y_v \geq 1/(W+1)$, where $\{y_v \in \R_{\geq 0} \}_{v \in u(X)}$ is a fractional $W$-separator according to the objective $\lpPrim(G[u(X)])$.
If we mutate $X$ by flipping $x_v$ to a search point $X'$, then we obtain $\lpPrim(G[u(X')]) \leq \lpPrim(G[u(X)]) - 1/(W+1)$ and $|X'_1| = |X_1| + 1$.
This implies that
\begin{align*}
	|X'_1| + (W+1) \cdot \lpPrim(G[u(X')]) 
	&\leq |X_1| + 1 + (W+1) (\lpPrim(G[u(X)]) - 1/(W+1))\\
	&= |X_1| +  (W+1) \cdot \lpPrim(G[u(X)])\\
	&\leq c
\end{align*}  
Observe that by the choice of $X$ that $X'$ is not dominated by any other search point in $\P$ and that it can only be dominated by a search point $X''$ where the $c$-inequality holds with $\lpPrim(G[u(X'')]) \leq \lpPrim(G[u(X')])$.
When $X$ mutates to $X'$, the LP-objective of $f(X')$ is at least $1/(W+1)$ smaller than that of $f(X)$.
Since $\lpPrim(G[u(X)]) \leq \opt$, such an event happens at most $(W+1) \cdot \opt$ times.
For the fitness functions $f_2$ and $f_3$, selecting $X$ and flipping only $v$ in it has probability $\Omega(1/n^2)$ or $\Omega(1/n^3)$ in the algorithm \algGlobalSemo (cf.~\cref{lemma::singleFlipAndBoundedPop,corollary::singleFlipAndBoundedPop}), respectively.
By using the method of fitness based partitions \cite{DBLP:conf/ppsn/Sudholt10}, this results in a total expectation time of $\O(n^2 W \cdot \opt)$ or $\O(n^3 W \cdot \opt)$ having the desired search point in $\P$, respectively.

\subsection*{Proof of \cref{thm::approx}}
\algGlobalSemo have the search point $0^n$ in the population $\P$ after $\O(n^2 \log n)$ expected number of iterations (cf.~\cref{corollary::zeroSol}).
This search point satisfies already the precondition of \cref{lemma::apxIsSave} with $c=(W+1) \cdot \opt$, since $|0^n_1| + (W+1) \cdot \lpPrim(G) = (W+1) \cdot \lpPrim(G) \leq (W+1) \cdot \opt$.
That is, once $0^n \in \P$, the algorithms need in expectation $\O(n^2 W \cdot \opt)$ iterations having a search point in $\P$ that is a $(W+1)$-approximation. 
As a result, for both algorithms we have a desired search point in $\P$ after $\O\left(n^2 (\log n +  W  \cdot \opt)\right)$ iterations in expectation.

\subsection*{Proof of \cref{thm::ptas}}
Let $X \in \{0,1\}^n$ be a search point such that $G[u(X)]$ contains no minimal strictly reducible pair (\emph{irreducible-condition}).
Furthermore, let $S$ be an optimal solution of $G[u(X)]$ and let $U = u(X) \setminus S$.
Note that $|S| \leq \opt$.
Since $G[u(X)]$ contains no minimal strictly reducible pair, we have $|U| = |u(X)| - |S| \leq 2W|S| - |S| = |S|(2W-1)$ by \cref{lemma::existenceReducible}.

Recall that \algGlobalSemoAlt chooses with $1/3$ probability the mutation that flips every bit corresponding to the vertices in $u(X)$ with $1/2$ probability.
From this, the search point $X$ has a probability of $\Omega\left(2^{-(1-\varepsilon) |S| - (1-\varepsilon) |S| \cdot (2W-1) }\right) = \Omega\left(4^{-(1-\varepsilon) |S| \cdot W }\right)$ to flip $(1-\varepsilon) |S|$ fixed vertices of $S$ and to not flip $(1-\varepsilon) |S| \cdot (2W-1)$ fixed vertices of $U$ in one iteration.
Independently from this, half of the remaining vertices of $S$ and $U$ are additionally flipped in this iteration, i.e., $\frac{1}{2} \varepsilon |S|$ of $S$ and  $\frac{1}{2} \varepsilon |U|$ of $U$.
Let $S'$ and $U'$ be the flipped vertices in this iteration and let $X'$ be the according search point.
Note that $\lpPrim(X') \leq |S| - |S'|$ simply because there is a $W$-separator of $G[u(X)] - S'$ of size $|S| - |S'|$.
Hence, we have
\begin{align*}
	|X'_1| + (W+1) \cdot \lpPrim(G[u(X')])
	& = |X_1| + |S'| + |U'| + (W+1) \cdot \lpPrim(G[u(X')])\\
	&\leq |X_1| + |S'| + |U'| +  (W+1) \cdot (|S| - |S'|)\\
	&= |X_1| + |S'| + |U'| + |S|(W+1) - |S'|(W+1)\\
	&= |X_1| + |S|(W+1) - |S'|W + |U'|.
\end{align*}
Next, we upper bound $|S'|$ and $|U'|$ in terms of $|S| \leq \opt$.
Using the fact $|U| \leq |S| (2W-1)$, we obtain 
$|U'| = \frac{1}{2} \varepsilon |U| \leq \frac{1}{2} \varepsilon |S|(2W-1) = \varepsilon |S| W - \frac{1}{2} \varepsilon |S|$.
Regarding $S'$ we have
$|S'| = (1-\varepsilon)|S| + \frac{1}{2} \varepsilon |S| = |S| - \varepsilon |S| + \frac{1}{2} \varepsilon |S|$.
As a result, we obtain
\begin{align*}
	& |X'_1| + (W+1) \cdot \lpPrim(G[u(X')])\\
	& \leq |X_1| + |S|(W+1) - |S'|W + |U'|\\
	&\leq |X_1| + |S|(W+1) - (|S| - \varepsilon |S| + \frac{1}{2} \varepsilon |S|)W + \varepsilon |S| W - \frac{1}{2} \varepsilon |S|\\
	&\leq |X_1| + |S|(W+1) - |S|W + \varepsilon |S| W - \frac{1}{2} \varepsilon |S| W + \varepsilon |S| W - \frac{1}{2} \varepsilon |S|\\
	&= |X_1| + |S| + 2 \varepsilon |S| W - \frac{1}{2} \varepsilon |S| W -  \frac{1}{2} \varepsilon |S|\\
	&= |X_1| + |S| + |S|\left(2 \varepsilon W - \frac{1}{2} \varepsilon W  - \frac{1}{2} \varepsilon\right). 
\end{align*}
The next step is to distinguish between (\ref{thm::item1APX}) and (\ref{thm::item2APX}) in the theorem.
The difference is mainly in the selection of $X$ and the upper bounded size of the population $\P$ with respect to the fitness functions $f_2$ and $f_3$.
Note that the chosen $X$ needs to satisfy the irreducible-condition. 
Moreover, observe that once a desired $X$ is guaranteed to be in the population, an event described above occurs in expectation after  $\O\left(n^2 \cdot 4^{(1-\varepsilon) |S| \cdot W }\right)$ and $\O\left(n \cdot 4^{(1-\varepsilon) |S| \cdot W }\right)$ iterations for the fitness functions $f_2$ and $f_3$, respectively, where the probability of choosing $X$ due to population size makes the linear factor difference (cf.~\cref{lemma::singleFlipAndBoundedPop,corollary::singleFlipAndBoundedPop}).
\begin{enumerate}
	\item[(\ref{thm::item1APX})]
	Let $(A_1,B_1),\dots,(A_m,B_m)$ be a sequence of minimal strictly reducible pairs, such that $G-\bigcup_{i=1}^m A_i$ contains no strictly reducible pair. 
	By \cref{lemma::XReducedInPop} we have in expectation a search point $X$ in the population $\P$ with $X_1 = \bigcup_{i=1}^m A_i$ after $\O\left(n^3(\log n + \opt) + 2^{\opt}\right)$ iterations using the fitness function $f_2$.
	Note that $X$ satisfies the irreducible-condition.
	Moreover, $X$ is a pareto optimal solution by \cref{lemma::paretoOptiSol}.
	By \cref{thm::saveReduction} we have $|X_1| = \opt - |S|$.
	Moreover, recall that $|S| \leq \opt$.
	Thus, we obtain
	\begin{align*}
		|X'_1| + (W+1) \cdot \lpPrim(G[u(X')]) 
		&\leq |X_1| + |S| + |S|\left(2 \varepsilon W - \frac{1}{2} \varepsilon W  - \frac{1}{2} \varepsilon\right)\\
		&= \opt - |S| + |S| + |S|\left(2 \varepsilon W - \frac{1}{2} \varepsilon W  - \frac{1}{2} \varepsilon\right)\\
		&\leq \opt + \opt\left(\frac{3}{2} \varepsilon W - \frac{1}{2} \varepsilon\right)\\
		&= \opt\left(1 + \varepsilon\left(\frac{3}{2}W - \frac{1}{2}\right)\right).
	\end{align*}
	As a result, by the choice of $X$ the resulting search point $X'$ satisfies the precondition of \cref{lemma::apxIsSave} with $c= \opt(1 + \varepsilon(\frac{3}{2}W - \frac{1}{2}))$.
	That is, once $X' \in \P$, the algorithm \algGlobalSemoAlt need in expectation $\O(n^3 W \cdot \opt)$ iterations having a search point in $\P$ which is a $(1 + \varepsilon(\frac{3}{2}W - \frac{1}{2}))$-approximation. 
	In summary, the desired search point $X'$ is in $\P$ after $\O\left(n^3(\log n + W \cdot \opt) + 2^\opt + n^2 \cdot 4^{(1-\varepsilon) \opt \cdot W} \right)$ iterations in expectation.
	
	\item[(\ref{thm::item2APX})]
	By \cref{lemma::ingredientParateo1,corollary::firstStepOptLP}, using the fitness function $f_3$, \algGlobalSemoAlt contains after $\O(n^2(\log n + \opt))$ iterations a search point $X \in \P$, such that $|X_1| + \lpPrim(G[u(X)]) = \lpPrim(G)$, and additionally, there is an optimal fractional $W$-separator $\{y_v \in \R_{\geq 0}\}_{v \in u(X)}$ with $y_v < 1$ of $G[u(X)]$ for all $v \in u(X)$.
	Since $y_v<1$ for all $v \in u(X)$, by \cref{lemma::onesInLPReduciblePair} we know that $G[u(X)]$ contains no strictly reducible pair.
	That is, $X$ satisfies the irreducible-condition.
	Furthermore, by  $|X_1| + \lpPrim(G[u(X)]) = \lpPrim(G)$ we have $|X_1| \leq \opt$.
	Thus, using the fact $|S| \leq \opt$ we obtain
	\begin{align*}
		|X'_1| + (W+1) \cdot \lpPrim(G[u(X')])
		&\leq |X_1| + |S| + |S|\left(2 \varepsilon W - \frac{1}{2} \varepsilon W  - \frac{1}{2} \varepsilon\right)\\
		&\leq \opt + \opt + \opt\left(2 \varepsilon W - \frac{1}{2} \varepsilon W  - \frac{1}{2} \varepsilon\right)\\
		&= 2 \cdot \opt + \opt\left(\frac{3}{2} \varepsilon W - \frac{1}{2} \varepsilon\right)\\
		&= \opt\left(2 + \varepsilon\left(\frac{3}{2}W - \frac{1}{2}\right)\right).
	\end{align*}
	As a result, by the choice of $X$ the resulting search point $X'$ satisfies the precondition of \cref{lemma::apxIsSave} with $c= \opt(2 + \varepsilon(\frac{3}{2}W - \frac{1}{2}))$.
	That is, once $X' \in \P$, the algorithm \algGlobalSemoAlt need in expectation $\O(n^2 W \cdot \opt)$ iterations having a search point in $\P$ which is a $(2 + \varepsilon(\frac{3}{2}W - \frac{1}{2}))$-approximation. 
	In summary, the desired search point $X'$ is in $\P$ after $\O\left(n^2(\log n + W \cdot \opt) + n \cdot 4^{(1-\varepsilon) \opt \cdot W}\right)$ iterations in expectation.
\end{enumerate}
	
\end{document}